\newtheorem{theorem}{Theorem}[section]
\newtheorem{lemma}[theorem]{Lemma}
\newtheorem{proposition}[theorem]{Proposition}
\newtheorem{corollary}[theorem]{Corollary}
\newtheorem{remark}[theorem]{Remark}
\newtheorem{definition}{Definition}[section]
\def\Z{{\mathbb Z}}
\def\R{{\mathbb R}}
\def\B{{\mathcal B}}
\def\S{{\mathcal S}}
\def\cB{{\mathcal B}}
\def\h1{\> d{\mathcal H}^1}
\def\b{\beta}
\def\e{\varepsilon}
\def\d{\delta}
\def\p{\partial}
\def\1{\left(}
\def\2{\right)}
\def\3{\left\{}
\def\4{\right\}}
\def\8{\infty}
\def\sm{\setminus}
\def\ss{\subseteq}
\def\cc{\subset\subset}
\DeclareMathOperator*{\dvg}{div}
\def\T{{\mathbb T}}
\newtheorem{question}[theorem]{Question}
\begin{document}

\title{A min-max variational approach to the existence of gravity water waves}

\author{Dennis Kriventsov}
\address[Dennis Kriventsov]{Rutgers University, Piscataway, NJ}
\email{dnk34@math.rutgers.edu}

\author{Georg S. Weiss}
\address[Georg S. Weiss]{Faculty of Mathematics, University of Duisburg-Essen, Germany}
\email{georg.weiss@uni-due.de}

\begin{abstract}
We establish the existence of gravity water waves by applying a mountain pass theorem to a singular perturbation of the Alt-Caffarelli functional associated with the two-dimensional water wave equations. Our approach is formulated entirely in physical coordinates and does not require the air phase to be connected, nor does it rely on symmetry or monotonicity in the $x$ or $y$ directions. The framework presented allows for both a variational approach to a variety of fluid equilibrium problems and for construction of min-max solutions to Bernoulli-type free boundary problems.
\end{abstract}

\maketitle

\section{Introduction}
We consider a two-dimensional inviscid and incompressible fluid influenced by gravity and possessing a free surface. Let $D(t) \subset \mathbb{R}^2$ denote the region occupied by the fluid at time $t$. The fluid dynamics are governed by the Euler equations for the velocity field $(u(t, \cdot), v(t, \cdot)) : D(t) \rightarrow \mathbb{R}^2$ and the pressure field $P(t, \cdot) : D(t) \rightarrow \mathbb{R}$:
\begin{equation*}
	\begin{cases}
    u_t + u u_x + v u_y = -P_x & \text{in } D(t), \\
    v_t + u v_x + v v_y = -P_y - g & \text{in } D(t), \\
    u_x + v_y = 0 & \text{in } D(t),
	\end{cases}
\end{equation*}
where subscripts represent partial derivatives, and $g$ denotes the gravitational constant. The boundary $\partial D(t)$ includes a free surface segment, denoted $\partial_a D(t)$, which is in contact with the surrounding air. The equations are supplemented with the standard boundary conditions:
\begin{equation*}
	\begin{cases}
    V = (u, v) \cdot \nu \quad \text{on } \partial_a D(t), \\
    P \quad \text{is locally constant on } \partial_a D(t),
	\end{cases}
\end{equation*}
where $V$ represents the normal velocity of the free surface $\partial_a D(t)$ and $\nu$ is the outward unit normal vector. Additionally, we assume the flow is irrotational:
\[
    u_y - v_x = 0 \quad \text{in } D(t).
\]

Focusing on traveling wave solutions, we consider a fixed domain $D \subset \mathbb{R}^2$, a speed $c \in \mathbb{R}$, and functions $(\tilde{u}, \tilde{v}) : D \rightarrow \mathbb{R}^2$ and $\tilde{P} : D \rightarrow \mathbb{R}$ such that
\begin{equation*}
	\begin{cases}
    D(t) = D + ct(1, 0) & \text{for all } t \in \mathbb{R}\\
    u(x, y, t) = \tilde{u}(x - ct, y) + c & \\
	v(x, y, t) = \tilde{v}(x - ct, y) & \\
	P(x, y, t) = \tilde{P}(x - ct, y). &
	\end{cases}
\end{equation*}
This leads to the following steady-state equations in $D$:
\begin{equation*}
	\begin{cases}
    \tilde{u} \tilde{u}_x + \tilde{v} \tilde{u}_y = -\tilde{P}_x & \\
    \tilde{u} \tilde{v}_x + \tilde{v} \tilde{v}_y = -\tilde{P}_y - g & \\
    \tilde{u}_x + \tilde{v}_y = 0 & \\
    \tilde{u}_y - \tilde{v}_x = 0 & \\
    (\tilde{u}, \tilde{v}) \cdot \nu = 0 & \text{on } \partial_a D \\
    \tilde{P} \text{is locally constant} & \text{on }  \partial_a D.
\end{cases}
\end{equation*}
This framework captures both water waves (with homogeneous Neumann conditions at a flat bottom $y = -d$ and periodic or other conditions at $x = \pm \infty$) and fluid equilibrium problems with lateral inflow and outflow in a bounded domain with possibly non-flat bottom boundaries.

In both scenarios, incompressibility and the kinematic boundary condition imply the existence of a stream function $\psi$ in $D$, up to a constant, defined by:
\[
    \psi_x = -\tilde{v}, \quad \psi_y = \tilde{u}.
\]
Hence, $\psi$ is locally constant  on $\partial_a D$. In the water wave case, $\psi$ is also locally constant on the flat bottom. Irrotationality implies that $\psi$ is harmonic in $D$, i.e.,
\[
    \Delta \psi = 0 \quad \text{in } D.
\]
Bernoulli's principle then gives
\[
    \tilde{P} + \frac{1}{2} |\nabla \psi|^2 + g y = \text{constant in } D,
\]
and the dynamic boundary condition yields the Bernoulli condition
\[
    |\nabla \psi|^2 + 2 g y = \text{locally constant on } \partial_a D.
\]
In this paper, we focus on the case of periodic water waves with finite depth, which, after normalization, is given by
\begin{equation}\label{eq:intropde}
	\begin{cases}
    \Delta \psi = 0 & \text{in } \left(\T \times [0, \infty)\right) \cap \{\psi>0\}, \\
    |\nabla \psi(x, y)|^2 = A - B y & \text{on } \left(\T \times [0, \infty)\right) \cap \partial \{\psi>0\},\\
    \psi(\cdot,0)=1 & \text{on } \T.
	\end{cases}
\end{equation}
Here and below, we use the notation $\T := \R/\Z$, identifying functions on $\T$ 
satisfying periodic lateral boundary conditions with $1$-periodic functions on $\R$ and
assume that $\psi$ is $1$-periodic in $x$. 
The variables $x$ and $y$ below will be located in $(x, y) \in \T \times [0, \infty)$.

Existence results for large-amplitude smooth waves have been obtained by Krasovskii \cite{k}, and by Keady and Norbury \cite{kn}. The existence of large-amplitude smooth solitary waves and of extreme solitary waves has been shown by Amick and Toland \cite{at}.
All of these existence results, as well as many subsequent works, use an equivalent formulation of the problem as a non-linear singular integral equation due to Nekrasov (derived via conformal mapping).

Another approach to finding non-trivial solutions of the fluid equilibrium problems with lateral inflow and outflow in a bounded domain 
is to minimize the Alt-Caffarelli energy with a gravity term
\begin{equation}\label{eq:introenergy}
	E[\psi] = \int_{\T \times [0, \infty)} \left( |\nabla \psi|^2 + \chi_{\{\psi > 0\}} (A - B y)_+\right).
\end{equation}
The Euler-Lagrange equation for this functional is precisely \eqref{eq:intropde}. However, naive minimization of this energy with boundary condition $\psi(x, 0) = c > 0$ will only lead to the trivial flat wave. This was observed and studied in \cite{aramaleoni}, where the authors then also study minimizers with non-constant boundary conditions and other configurations. In \cite{gravina-leoni}, a different approach is taken to get non-flat solutions: roughly speaking, $\psi$ is constrained to be $0$ along a line segment $\{1/2\} \times [l, \infty)$ in a way which precludes the flat wave from being a solution. The authors then study the behavior of the minimizers, including near the point $(1/2, l)$. It is not, however, clear that for some parameter $l$ the resulting constrained minimizer is truly a solution of \eqref{eq:intropde} at the point $(1/2, l)$.

Formally, a minimization with double constraint, that is 
minimization of $$\int_{\T \times [0, \infty)} |\nabla \psi|^2$$
with the constraints
$$ \int_{\T \times [0, \infty)} \chi_{\{\psi> 0\}} = c_1, \quad  \int_{\T \times [0, \infty)} y\chi_{\{\psi> 0\}} = c_2$$
should lead to nontrivial waves. However, along minimizing sequences for this problem, part of the volume will escape to infinity, leading to a loss of compactness in direct method arguments. This phenomenon also appears to occur in numerical simulations; the authors are grateful to Antoine Laurain and Josue Daniel Diaz Avalos for analyzing this formulation from a numerical standpoint. Even when working in a class of monotone-in-$y$ functions, similar loss of compactness (via cusps with vertical lateral boundary) is still present.

In light of these considerable difficulties--only some of which were known at the time--John Toland raised the following question (paraphrased) in a discussion with Eugen Varvaruca and the second author:
\begin{question}
	Can one obtain any (even small amplitude) existence results for \eqref{eq:intropde} by variational methods in the original variables?
\end{question}
The main goal of this paper is to show existence of large-amplitude smooth periodic waves via a mountain pass approach. Our approach is formulated entirely in physical coordinates and does not require the air phase to be connected.

Our main results are:
\begin{theorem}\label{main1}
Assume that $B < 2 (\frac{A}{3})^{3/2}$ and also 
\begin{align*}\label{cond2}
2 \frac{A}{B} 2 \pi & \left( 1 - \frac{1}{3}\left(1+2 \cos\left( \frac{1}{3} \arccos\left( 1-\frac{27B^{2}}{2A^{3}}\right) \right)\right)\right)\times\\
			&\coth\left(2 \pi \frac{1}{3} \frac{A}{B} \left(1+2 \cos\left( \frac{1}{3} \arccos\left( 1-\frac{27B^{2}}{2A^{3}}\right) \right)\right)\right) < 1.
			\end{align*}
Then there exists a domain variation critical point of $E$ (defined on $\T \times [0, \infty)$ and $1$-periodic in $x$) that is not independent of $x$.
\end{theorem}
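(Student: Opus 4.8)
The plan is to obtain the desired critical point as a mountain pass for a smooth approximation of $E$ and then pass to the limit using the regularity theory for the Alt--Caffarelli functional with a gravity term. First I would replace $\chi_{\{\psi>0\}}$ in \eqref{eq:introenergy} by $\Gamma_\varepsilon(\psi)$ with $\Gamma_\varepsilon(t)=\Gamma(t/\varepsilon)$, $\Gamma$ smooth and nondecreasing, equal to $0$ for $t\le 0$ and $1$ for $t\ge 1$, obtaining a $C^1$ functional $E_\varepsilon$ on the affine Hilbert space $\{\psi(\cdot,0)=1\}$; to bypass the non-compactness of $\T\times[0,\infty)$ I would work on the truncated cylinder $\T\times[0,L]$ with the extra condition $\psi(\cdot,L)=0$ for a large fixed $L>A/B$ (larger than all relevant depths), noting that since $(A-By)_+$ vanishes for $y>A/B$ the truncation does not affect solutions whose positivity set stays in $\{y<A/B\}$, and is removed at the end. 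The $x$-independent critical points of $E_\varepsilon$ are, up to an $O(\varepsilon)$ correction, the flat waves $\psi_d=(1-y/d)_+$ whose depth solves $Ad^2-Bd^3=1$ --- the hypothesis $B<2(A/3)^{3/2}$ is exactly the condition for two such depths $d_-<2A/(3B)<d_+$ to exist --- together with the almost-trivial profile $\psi_*=1-y/L$. Their reduced energies are $g(d):=\tfrac1d+\int_0^d(A-By)_+\,dy$, and from $g'(d)=-d^{-2}(1-Ad^2+Bd^3)$ one sees that $g$ is strictly decreasing, then increasing, then decreasing, with local maximum $g(d_+)$ and local minimum $g(d_-)$.

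The key computation is the second domain variation of $E$ at a flat wave $\psi_d$ in the direction of the fundamental mode $\cos(2\pi x)$: perturbing the free boundary to $y=d+\epsilon\cos(2\pi x)$ and solving $\Delta=0$ in the perturbed strip, one finds that the Dirichlet term contributes $+\tfrac{2\pi}{d^2}\coth(2\pi d)$ and the gravity term contributes $-\tfrac B2$, so the second variation has the sign of $\tfrac{4\pi}{Bd^2}\coth(2\pi d)-1=4\pi\bigl(\tfrac AB-d\bigr)\coth(2\pi d)-1$ (using $d^{-2}=A-Bd$ on a flat wave). Writing the larger root via the trigonometric solution of the cubic as $d_+=\tfrac{A}{3B}\bigl(1+2\cos(\tfrac13\arccos(1-\tfrac{27B^2}{2A^3}))\bigr)$, the second hypothesis of the theorem is precisely the statement that this second variation is strictly negative at $d=d_+$. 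Thus $\psi_{d_+}$ has, besides the flat direction of increasing depth, a genuinely $x$-dependent descent direction.

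I would then apply the mountain pass theorem with $\psi_*$ at the bottom of the valley and $\psi_{d_-}$ as a lower-energy far endpoint. That $\psi_*$ is a strict local minimum of $E_\varepsilon$ follows because near $\psi_*$ the set $\{\psi>0\}$ is full throughout $\{y<A/B\}$, so the potential part can only decrease while the Dirichlet part is strictly convex and minimized at $\psi_*$ (the small measure of the region where a small $H^1$ perturbation could create positivity defects is controlled by Sobolev embedding and beaten by the quadratic Dirichlet gain), and $g(d_-)<\inf_{\partial B_\rho(\psi_*)}E_\varepsilon$ holds for $\rho$ small in the hypothesized parameter range. Every path of flat functions from $\psi_*$ to $\psi_{d_-}$ must cross the barrier at height $g(d_+)$, but the $\cos(2\pi x)$ descent direction at $\psi_{d_+}$ lets one route an admissible path strictly below $g(d_+)$; together with the strict local minimum property this produces a min-max value $c_\varepsilon$ lying in an interval $(E_\varepsilon[\psi_*],g(d_+))$ bounded away from both endpoints uniformly in $\varepsilon$. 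On the bounded cylinder $E_\varepsilon$ is $C^1$ and satisfies the Palais--Smale condition --- bounded energy gives an $H^1$ bound since the potential is nonnegative and the boundary traces are prescribed, and the semilinear structure upgrades weak to strong convergence --- so the mountain pass theorem yields a critical point $\psi_\varepsilon$ with $E_\varepsilon[\psi_\varepsilon]=c_\varepsilon$ solving $\Delta\psi_\varepsilon=\tfrac12\Gamma_\varepsilon'(\psi_\varepsilon)(A-By)_+$. Uniform $L^\infty$ bounds and interior $C^{1,\alpha}$ estimates allow extracting, as $\varepsilon\to0$, a limit $\psi_0$ which --- by singular-perturbation arguments for Bernoulli-type free boundary problems, together with density and non-degeneracy estimates for $\{\psi_\varepsilon>0\}$ --- is a domain-variation critical point of $E$ on $\T\times[0,\infty)$ with $E[\psi_0]=\lim c_\varepsilon$ strictly between the trivial energy $A^2/(2B)$ and $g(d_+)$. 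Since the only $x$-independent domain-variation critical points of $E$ have energies $g(d_-)<A^2/(2B)$, the value $g(d_+)$, or (for the trivial solution $\psi\equiv1$) exactly $A^2/(2B)$, the solution $\psi_0$ cannot be independent of $x$; the standard Alt--Caffarelli free-boundary regularity theory then shows that $\psi_0$ is in fact a classical solution of \eqref{eq:intropde}.

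I expect the genuinely hard part to be the compactness. Even though the mountain-pass framework is designed to defeat the escape-of-volume-to-infinity phenomenon that obstructs direct minimization (and the compact support of $(A-By)_+$ in $y$ helps confine the positivity set), carefully verifying the Palais--Smale condition for the one-phase problem, establishing non-degeneracy and density estimates for the free boundary of $\psi_\varepsilon$ uniform in $\varepsilon$ so that the $\varepsilon\to0$ limit neither loses energy nor degenerates, and --- in tandem --- showing that the min-max level stays uniformly bounded away from the trivial energy $A^2/(2B)$ so the solution does not collapse to the trivial wave, will absorb most of the technical work. Establishing the strict inequality $c_\varepsilon<g(d_+)$ from the second-variation computation, i.e.\ exploiting the exact hypotheses of the theorem, is comparatively routine but is what pins down the admissible range of $(A,B)$.
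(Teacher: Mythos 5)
Your overall architecture is the same as the paper's: regularize $\chi_{\{u>0\}}$ to get a smooth functional $E_\varepsilon$, run a mountain pass between the two ``stable'' flat configurations, prove uniform ($\varepsilon$-independent) gradient bounds, and pass to the limit $\varepsilon\to 0$ using singular-perturbation compactness for Bernoulli-type problems. The genuinely different ingredient is how you exclude the unstable flat wave $U_+$ as the limit. The paper keeps track of Morse index: the mountain pass point of $E_\varepsilon$ can be taken of index at most $1$, the second inner variation passes to the limit, and the hypothesis on $(A,B)$ makes any critical points of $E_{\varepsilon_k}$ converging to $U_+$ have index at least $2$ (Lemma \ref{lem:variationformulas}), a contradiction. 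You instead propose an energy-level argument: use the negative $\cos(2\pi x)$ direction at $U_+$ to route a path whose maximal energy is strictly below $E[U_+]=e(Y_+)$, so the min-max level is trapped strictly between the two local minima energies and $e(Y_+)$, and no flat solution can appear in the limit. This route is viable: the detour works because the first inner variation of $E$ in the pure $\cos(2\pi x)$ mode vanishes along the \emph{entire} flat family $u_Y$ (the $x$-integral of the mode is zero), so near $(Y_+,0)$ the reduced energy is $e(Y)+\tfrac{t^2}{2}\partial_{tt}(\cdot)\le e(Y_+)-\delta_0$, and $E_\varepsilon\le E$ makes the bound uniform in $\varepsilon$. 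What it buys is that you avoid the Morse-index mountain pass of Fang--Ghoussoub and the limit of second variations; what it costs is that you now need full energy convergence $E[\psi_0]=\lim c_\varepsilon$ (not just the $\gamma$-liminf) to rule out the limit collapsing onto $U_-$ or $U_\infty$, and you must actually write out the detour construction, which you only assert.

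There are two concrete gaps. First, your mountain-pass geometry rests on the claims that $g(d_-)<A^2/(2B)$ and that $g(d_-)<\inf_{\partial B_\rho(\psi_*)}E_\varepsilon$. Neither follows from the hypotheses: the ordering of $E[U_-]$ versus $E[U_\infty]$ is \emph{not} determined in the admissible parameter range (the paper explicitly declines to classify it and instead swaps the roles of $U_-$ and $U_\infty$ depending on which energy is larger). If $E[U_-]\ge E[U_\infty]$, your configuration with $\psi_*$ as the center fails, and you must center the geometry at $U_-$ -- but $U_-$ is \emph{not} a local minimizer of $E_\varepsilon$, so the uniform-in-$\varepsilon$ rim estimate is not elementary; it is exactly the content of the paper's stability lemma (Lemma \ref{lem:stable}), proved via $\gamma$-convergence and the local minimality of $U_-$ for the limit functional $E$ (Lemma \ref{lem:localmin}). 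Your sketch of strict local minimality addresses only $\psi_*$ and gives nothing uniform at $U_-$.

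Second, you treat the $\varepsilon\to0$ limit as standard singular-perturbation technology (``density and non-degeneracy estimates for $\{\psi_\varepsilon>0\}$''), but for min-max, saddle-type critical points such estimates are not available by the usual arguments (they are a minimization tool), and the statement that a limit of critical points of $E_\varepsilon$ is a domain-variation critical point of $E$, together with $L^1$ convergence of the volume/penalization term, was a long-standing open problem; the paper invokes its recent compactness theorem (Remark \ref{r:compactness}, \cite{kw}) precisely for this. Your exclusion argument is even more dependent on this step than the paper's, since without the convergence of the penalization term you lose both the critical-point property of $\psi_0$ and the energy identity $E[\psi_0]=\lim c_\varepsilon$ on which your level-trapping hinges. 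So the plan can be made to work, but only by importing that compactness theorem (or an equivalent), not by routine non-degeneracy arguments, and after repairing the endpoint geometry as above.
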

The conditions on $A, B$ are discussed in greater detail in Remark \ref{aboutcondition2} and Figure \ref{fig:abplot}, and can be rewritten in other ways. They are precisely the set of parameters when there are two distinct flat waves, with one locally minimal while the other sufficiently unstable.

We do not, at this time, know the maximal amplitude of the waves produced by our approach,
and we do not even know whether our water waves are on the same branch as 
those in \cite{at}. Let us, however, emphasize that for our existence approach neither symmetry nor monotonicity in the $x$ or $y$ directions
are necessary. This may be of interest, as numerical results indicate the existence of non-symmetric waves
(\cite{chensaffman}, \cite{vanden-broeck}, \cite{zufiria})
as well as water waves non-monotone in the $y$-direction (\cite{constantinvarvaruca}, \cite{wahlen}).
We can, however, also produce waves with symmetries, which have enhanced regularity properties.

\begin{theorem}\label{main2}
Assume that $B < 2 (\frac{A}{3})^{3/2}$ and also 
\begin{align*}
2 \frac{A}{B} 2 \pi & \left( 1 - \frac{1}{3}\left(1+2 \cos\left( \frac{1}{3} \arccos\left( 1-\frac{27B^{2}}{2A^{3}}\right) \right)\right)\right)\times\\
			&\coth\left(2 \pi \frac{1}{3} \frac{A}{B} \left(1+2 \cos\left( \frac{1}{3} \arccos\left( 1-\frac{27B^{2}}{2A^{3}}\right) \right)\right)\right) < 1.
			\end{align*}
Then there exists a domain variation critical point $u$ of $E$ (defined on $\T \times [0, \infty)$ and $1$-periodic in $x$) that is not independent of $x$, $u(x, y) = u(-x, y)$, and $u$ is symmetrically decreasing, that is $u_x(x, y) \leq 0$ for $x \in (0, 1/2)$.
The free boundary $\partial \{ u>0\}$ is the graph of a function of $y$, that is,
	$\partial \{ u>0\} = \{ (f(y),y): y \in S\}$, where $S$ is a closed subset of $[0,A/B]$.
The water surface
$\S := \{ (f(y),y): y \in I\}$, where $I$ is the first/leftmost connected component of $S$
is regular in the sense that
$\S \setminus \left((0,A/B) \cup \{ |x| = 1/2 \}\right)$ is locally the graph of an analytic function.
Moreover, either $\S\setminus (0,A/B)$ is locally the graph of an analytic function,
or there is a downward-pointing cusp of $\S$ at $|x|=1/2$ at which non-$\S$ free boundary
points must exist that converge to the cusp point.
\end{theorem}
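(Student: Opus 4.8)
The plan is to build on Theorem \ref{main1}, whose output is a domain-variation critical point of $E$ that is non-flat, by running the same mountain-pass construction inside the cone of symmetric, symmetrically decreasing functions. First I would set up a symmetrized competitor class: functions $\psi$ on $\T\times[0,\infty)$ with $\psi(x,y)=\psi(-x,y)$ and $\psi_x(x,y)\le 0$ for $x\in(0,1/2)$, and check that this class is preserved by the relevant flows/truncations used in the singular-perturbation scheme and that it contains the two flat waves which serve as the endpoints of the mountain pass path. Since Steiner-type symmetrization (or, more simply here, the rearrangement replacing $\psi$ by its symmetric-decreasing rearrangement in $x$ on each horizontal slice) does not increase $\int|\nabla\psi|^2$ and does not change $\int\chi_{\{\psi>0\}}(A-By)_+$ — the latter because the measure of the positivity set on each slice is preserved — the mountain pass value over the symmetric class equals the one over the full class, and the minimizing/mountain-pass sequence can be taken symmetric and monotone. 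Passing to the limit in the singular perturbation parameter $\e\to 0$ as in the proof of Theorem \ref{main1}, these two properties (evenness and $u_x\le 0$ on $(0,1/2)$) are closed under the convergences used, giving the critical point $u$ with the stated symmetry and monotonicity.

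Next I would extract the structure of the free boundary from symmetric monotonicity. Because $u$ is even and $u_x\le 0$ on $(0,1/2)$, for each fixed $y$ the slice $\{x\in(0,1/2): u(x,y)>0\}$ is an interval of the form $(0,f(y))$ (possibly empty), so $\partial\{u>0\}$ is exactly $\{(f(y),y):y\in S\}$ for $S=\{y: 0<f(y)<1/2\}$ closed — closedness and the inclusion $S\subseteq[0,A/B]$ follow from upper semicontinuity of the positivity set together with the Bernoulli condition $|\nabla u|^2=A-By$ on the free boundary, which forces $\{u>0\}$ to lie in $\{y<A/B\}$ up to the degenerate set $y=A/B$. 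Define $I$ as the leftmost (smallest-$y$) connected component of $S$ and $\S$ the corresponding arc. Away from the "bad" set — the endpoints $y=0$ and $y=A/B$ and the vertical lines $|x|=1/2$ — one has $A-By$ bounded away from $0$, so the free boundary condition is non-degenerate; here I would invoke the standard regularity theory for Bernoulli-type free boundaries (Alt–Caffarelli, plus the analyticity bootstrap for the overdetermined one-phase problem with analytic right-hand side) to conclude $\S\setminus((0,A/B)\cup\{|x|=1/2\})$ is locally an analytic graph. (I interpret $(0,A/B)$ here as shorthand for the two horizontal levels $y=0$ and $y=A/B$.)

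The remaining issue is the behavior at the symmetry line $|x|=1/2$, and this is the main obstacle. On $\{x=1/2\}$ the even reflection forces $u_x=0$, so either the free boundary meets the line transversally/regularly and the analytic graph property extends across it by reflection — giving the first alternative $\S\setminus(0,A/B)$ analytic — or $f(y)\uparrow 1/2$ only in the limit and the arc $\S$ develops a vertical asymptote at $|x|=1/2$, i.e. a downward-pointing cusp. To rule out a "flat" contact (the graph staying strictly inside $|x|<1/2$ and terminating) I would use that $\S$ is the leftmost component: if $\S$ terminated without reaching $y=A/B$ and without reaching $|x|=1/2$, a blow-up analysis at the terminal point would contradict either non-degeneracy of the Bernoulli condition or the monotonicity $u_x\le0$. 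The delicate part is showing that in the cusp case there must be further free boundary points, not belonging to $\S$, accumulating at the cusp point: here I would argue that the positivity set $\{u>0\}$ cannot simply pinch off to measure zero at the cusp (this would violate the volume/non-degeneracy estimates inherited from the variational construction), so some other component of $\{u>0\}$ — or a re-entrant part of the same component at $|x|=1/2$ — must persist near the cusp point, and its boundary supplies the claimed non-$\S$ free boundary points converging to the cusp. Making the blow-up at the cusp rigorous — classifying the possible homogeneous limits of $u$ under the constraints of harmonicity, the degenerate Bernoulli condition (since $A-By\to A-B/(2\cdot\text{level})$ may still be positive there, so in fact the condition is non-degenerate unless the cusp sits at $y=A/B$), evenness, and monotonicity — is where the real work lies, and I expect it to mirror the cusp analysis in \cite{gravina-leoni} near the point $(1/2,l)$.
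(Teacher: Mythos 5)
Your overall strategy (symmetrize the min-max construction, pass to the limit in $\e$, then do free boundary analysis) is the same as the paper's, but the proposal has gaps exactly at the points where the real work is. First, in the symmetrization step you never address why the symmetrized objects are admissible for the mountain pass: the paper does not constrain the functional to a symmetric cone (which would raise the separate issue of whether constrained critical points are genuine critical points, or whether the deformation preserves the cone), but instead Steiner-symmetrizes near-optimal \emph{paths} $p_k$, and for $p_k^*$ to remain a continuous path in $H$ one needs the continuity of the map $u\mapsto u^*$ on $H$ (Proposition \ref{prop:symcont}, Burchard's theorem); this is flagged in the paper as extremely subtle, and the analogous statement for radial rearrangement is false (Almgren--Lieb). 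You also do not explain how symmetry of the approximating paths is transferred to the critical point itself: the paper needs the quantitative mountain pass theorem of Fang--Ghoussoub, producing critical points $u_\e$ at distance tending to zero from the symmetrized min-max sequence, so that $u_\e=u_\e^*$ and the property survives the limit $\e\to0$. Without one of these mechanisms your phrase ``the mountain-pass sequence can be taken symmetric and monotone'' does not yield a symmetric critical point.

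Second, for the regularity you propose to ``invoke the standard Alt--Caffarelli theory'' away from a bad set, but $u$ is only a domain-variation critical point, so flatness and non-degeneracy are not free: the paper proves non-degeneracy (Lemma \ref{lem:ndeg}) using the graph property and the variational/compactness structure of Remark \ref{r:compactness}, classifies blow-ups via \cite{vw}, excludes cusps pointing away from $x=0$ by a divergence-theorem length comparison, and excludes cusps pointing toward $x=0$ (and upward cusps at $x=1/2$) by Oddson's theorem versus the Lipschitz bound; only then do the Alt--Caffarelli flatness and analyticity theorems apply. You explicitly defer this (``where the real work lies''), so the core of the regularity statement is unproved in your proposal. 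Relatedly, your argument for the last clause--that non-$\S$ free boundary points must accumulate at a downward cusp at $|x|=1/2$--via ``the positivity set cannot pinch off to measure zero'' is not the right mechanism: the issue is that if no other free boundary points approached the cusp, $u$ would be harmonic outside an arbitrarily thin cusp region, and Oddson-type growth ($\sup_{B_r}u\gtrsim r^\mu$ for $\mu>1/2$) contradicts the Lipschitz bound; no volume estimate is involved. Finally, note that in the theorem $(0,A/B)$ denotes the single crest point $(x,y)=(0,A/B)$ where the Bernoulli condition degenerates, not the horizontal levels $y=0$ and $y=A/B$ as you interpreted; your reading weakens the statement being proved.
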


The basic idea of the proof is, in some sense, straightforward, but presents challenges in the execution. We begin by studying the energy structure of the Alt-Caffarelli functional \eqref{eq:introenergy} (as was, in fact, already done in \cite{aramaleoni}): for the values of $A, B$ under consideration, there are only three one-dimensional critical points, with two of them local minimizers and one being unstable. The key further observation we make is that, again for the parameters as above, the unstable solution has Morse index at least 2. Formally, then, one should be able to apply a mountain pass theorem to curves connecting the two local minimizers to obtain a critical point of Morse index at most 1, which is then not any of these three flat solutions.

The main issue with making this rigorous is that there is no mountain pass theorem available in the literature for functionals like \eqref{eq:introenergy}, which are not differentiable. If one attempts to use classic versions like \cite{AR73}, it will be impossible to verify the Palais-Smale condition. An analogy can be made with the minimal surface functional, where an extensive min-max theory has been developed (and is an area of active study), but is extremely non-trivial and requires somewhat different ideas from the traditional semilinear context. Bernoulli-type free boundary problems like \eqref{eq:intropde} often exhibit similar difficulties to minimal surfaces.

In this paper, we present an elementary approach to min-max arguments for Bernoulli problems. First, we regularize \eqref{eq:introenergy} to $E_\e$ by smoothing out $\chi_{\{\psi > 0\}}$ to a mollified $\mathcal{B}_\e(\psi)$. This is a classic strategy in free boundaries, and it is easy to see that e.g. $E_\e$ $\gamma$-converges to $E$. In particular, the energy landscape of $E_\e$ is similar to that of $E$. Unlike $E$, $E_\e$ is smooth, satisfies the assumptions of standard mountain pass theorems like \cite{AR73}, and we successfully find the critical points we wanted. Then we ``simply" take a limit of these critical points, to get a critical point of $E$ itself. This strategy is reminiscent of the Allen-Cahn approach to min-max for minimal surfaces, albeit with a different semilinear approximation.

The main problem with this strategy would be that it is not at all clear that a limit of critical points to $E_\e$ is actually a critical point to $E$. This was an open question in the literature for a long time, but in a recent work \cite{kw}, the authors have been able to prove exactly such a compactness result. Moreover, in the Bernoulli context it is not difficult to pass second (inner) variation to the limit as well, and so the limiting critical point has Morse index at most one (this is different from the situation with minimal surfaces). We would like to emphasize that up to this point, the method is extremely general and requires minimal a priori knowledge of qualitative structure or regularity.

To prove Theorem \ref{main2}, we first produce symmetric and monotone min-max solutions by performing a Steiner symmetrization on the min-max setup. Then we use free boundary arguments to obtain the regularity stated. As our goal here is to present this overall strategy and its application to the water waves problem \eqref{eq:intropde}, we do not attempt to obtain the strongest possible regularity results here. We intend to explore that point in future work.

The organization of the paper is as follows: in Section \ref{s:prelim}, we set up basic notation and terminology. The energy landscape of $E$ and $E_\e$ is studied in Section \ref{s:energy}, to set up for the application of the mountain pass theorem in Section \ref{s:mountainpass}. To then pass to the limit and prove the main theorems in Section \ref{s:existence}, we first prove uniform Lipschitz estimates in Section \ref{s:lip}. Finally, Section \ref{s:regularity} deals with the regularity of the water surface.

\section{Preliminaries}\label{s:prelim}

Set
\[
	E[u] = \int_{\T \times [0, \infty)} \left( |\nabla u|^2 + \chi_{\{u > 0\}} (A - B y)_+\right),
\]
where $A, B$ are positive parameters. Our goal is to find critical points of $E$ in the space
\[
	H := \{u \in \dot{W}^{1, 2}(\T \times [0, \infty)) : u(x, 0) = 1\};
\]
here $\dot{W}^{1, 2}(\T \times [0, \infty))$ is the closure of $C^\infty_c(\T \times [0, \infty))$ with respect to the seminorm
$$\Vert\nabla u\Vert_{L^2(\T \times [0, \infty))} + \inf_{c\in \R}\Vert u-c\Vert_{L^2(\T \times [0, \infty))}.$$
Notice that there are three free parameters in this variational problem: the values $A, B$ and the value of $u$ along $\T \times \{0\}$. There is also one elementary scaling property available (multiplying $u$ by a constant) which we have used to normalize to $u = 1$ along $\T \times \{0\}$, leaving us with a two-parameter family.

We will first work with a ``regularized'' version of this energy. Fix $\cB : \R \rightarrow [0, \infty)$ be a smooth, nondecreasing function with $\cB(0) = 0$, $\cB(t) > 0$ for $t > 0$, and $\cB(t) = 1/2$ for $t \geq 1$. Then set $\cB_\e(t) = \cB(t/\e)$, and
\[
	E_\e[u] = \int_{\T \times [0, \infty)} \left(|\nabla u|^2 + 2 \cB_\e(u) (A - B y)_+\right).
\]
Formally, as $\e \rightarrow 0$, $E_\e \rightarrow E$.

For $E_\e$, there is a straightforward notion of critical point, which we will use below:
\begin{definition}[Outer variation critical point]
	A function $u \in H$ is an \emph{outer variation critical point} of $E_\e$ if for any $T > 0$ and $v \in W^{1, 2}_0(\T \times (0, T))$,
	\[
		\partial_t E_\e[u + t v]  |_{t = 0} = 0.
	\]
\end{definition}
This concept makes sense for $E$ as well, of course, but we will not be able to directly construct critical points of this type, and so will use a different notion instead:
\begin{definition}[Inner variation critical point]
	A function $u \in H$ is an \emph{inner variation critical point} of $E$ if:
	\begin{enumerate}
		\item $u$ is locally Lipschitz continuous on $\T \times [0, \infty)$.
		\item $u$ is harmonic on the (open) set $\{u \neq 0\} \cap \T \times (0, \infty)$.
		\item For any vector field $V \in C_c^\infty(\T \times (0, \infty))$ with flow $\phi_t$,
		\[
			\partial_tE[u \circ \phi_t^{-1}] |_{t = 0}  = 0.
		\]
	\end{enumerate}
\end{definition}
Assumption (2) is equivalent to asking that $u$ be an outer variation critical point to $E$ on the set $\{u \neq 0\}$, and does not follow directly from (3). In general (3) is weaker than the outer variation property in this context, but is better behaved under limits.

While in principle $H$ contains functions which may change sign, this is purely for ease of formulation, as we now check:

\begin{lemma}[Boundedness of outer variation critical points]\label{lem:bounded}
	Let $u$ be an outer variation critical point of $E_\e$. Then $u$ is $C^2$, $0 < u(x, y) \leq 1$ for $y > 0$, and $u$ solves
	\[
		\Delta u = \b_\e(u)(A - B y)_+
	\]
	on $\T \times (0, \infty)$.
\end{lemma}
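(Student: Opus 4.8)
The plan is to extract the three conclusions ($C^2$ regularity, the two-sided bound $0 < u \le 1$, and the PDE) from the defining variational identity, in the order: first derive the weak equation, then bootstrap to regularity, then use comparison/maximum-principle arguments for the pointwise bounds, cycling back to get $C^2$.

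\emph{Step 1: the weak Euler--Lagrange equation.} Computing $\partial_t E_\e[u+tv]|_{t=0}$ for $v \in W^{1,2}_0(\T\times(0,T))$ gives $\int 2\nabla u\cdot\nabla v + 2\beta_\e(u)(A-By)_+ v = 0$, where $\beta_\e = \cB_\e'$ (note $\beta_\e \ge 0$ is smooth, bounded, and supported in $[0,\e]$). Hence $u$ is a distributional (hence weak) solution of $\Delta u = \beta_\e(u)(A-By)_+$ on $\T\times(0,\infty)$, with the boundary trace $u(\cdot,0)=1$ built into $H$. Since $\beta_\e$ is bounded and $(A-By)_+$ is bounded and Lipschitz, the right-hand side is in $L^\infty_{loc}$; elliptic regularity gives $u \in W^{2,p}_{loc}$ for all $p<\infty$, hence $u \in C^{1,\alpha}_{loc}$. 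Then the right-hand side $\beta_\e(u)(A-By)_+$ is itself $C^\alpha_{loc}$ (composition of the smooth $\beta_\e$ with the $C^{1,\alpha}$ function $u$, times a Lipschitz factor), so Schauder gives $u \in C^{2,\alpha}_{loc}$, in particular $C^2$ on $\T\times(0,\infty)$; near $y=0$ one uses boundary elliptic estimates with the smooth (constant) Dirichlet data. One should also record a decay statement at $y\to\infty$ coming from membership in $\dot W^{1,2}$ so that the maximum principle below applies on the unbounded domain (e.g. $u \to \mathrm{const}$ in a suitable averaged sense, and the energy $E_\e[u]<\infty$ forces $\cB_\e(u)$ integrable against $(A-By)_+$, confining the positivity set in $y$ to $[0,A/B]$ in an integrated sense).

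\emph{Step 2: the bounds $0 < u \le 1$.} Upper bound: $w := u - 1$ satisfies $\Delta w = \beta_\e(u)(A-By)_+ \ge 0$ wherever $u>0$... actually more simply, observe $\Delta u \ge 0$ everywhere on $\{u>0\}$ and $\Delta u = 0$ on $\{u\le 0\}$ since $\beta_\e$ vanishes there, so $u$ is subharmonic on all of $\T\times(0,\infty)$; with boundary value $1$ at $y=0$ and appropriate behavior at $y=\infty$, the maximum principle yields $u\le 1$. Lower bound: first get $u\ge 0$. Test the weak equation with $v = u_- = \max(-u,0)\,\eta$ (truncated by a cutoff $\eta$): since $\beta_\e(u) = 0$ on $\{u<0\}$ (because $\cB$ is nondecreasing with $\cB=0$ on $(-\infty,0]$, so $\beta_\e \equiv 0$ there), one gets $\int |\nabla u_-|^2\eta \le$ (cutoff error terms), and letting $\eta \uparrow 1$ forces $u_- \equiv 0$, i.e. $u\ge 0$. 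For strict positivity $u>0$ when $y>0$: $u$ is nonnegative, superharmonic is false but $\Delta u = \beta_\e(u)(A-By)_+ \ge 0$, so $u$ is subharmonic and nonnegative; the strong maximum principle for subharmonic functions doesn't directly give positivity, so instead argue: on the open set $\{u>0\}$, $u$ is a nonnegative solution of $\Delta u = \beta_\e(u)(A-By)_+$; if $u(x_0,y_0)=0$ for some $y_0>0$ then $x_0,y_0$ is an interior minimum, and near it $\Delta u = \beta_\e(u)(A-By)_+$ with $\beta_\e(u)$ small; comparing with the harmonic function with the same boundary data on a small disk and using that $u\equiv 1 > 0$ on $y=0$, connectedness of $\T\times(0,\infty)$ and the Hopf lemma / strong maximum principle applied to $\Delta u - c(x,y)u = 0$ with $c = \beta_\e(u)(A-By)_+/u \ge 0$ bounded (where $u>0$) rules out an interior zero unless $u\equiv 0$, which contradicts the boundary condition. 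Hence $0<u\le 1$ for $y>0$, and then $c(x,y)$ above is genuinely bounded, closing Step 1's Schauder argument globally.

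\emph{Main obstacle.} The routine parts are the weak formulation and the elliptic bootstrap; the delicate point is handling the \emph{unbounded} domain $\T\times[0,\infty)$ in the maximum-principle steps — one must use the $\dot W^{1,2}$/finite-energy hypothesis to control behavior as $y\to\infty$ (otherwise $u\le 1$ can fail, e.g. $u = 1+y$ is harmonic), and to justify that test functions with noncompact support or cutoff limits are admissible. A clean way is to prove $u$ decays (or is bounded) at infinity first, via a Caccioppoli/energy argument using that $E_\e[u]<\infty$ and that $(A-By)_+$ vanishes for $y>A/B$, which makes $u$ harmonic and finite-energy for large $y$, hence exponentially close to a constant by separation of variables on the cylinder $\T\times(A/B,\infty)$; that constant must be $\le 1$ by comparison, and in fact one expects it to be $0$. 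Once the far-field behavior is pinned down, all the maximum-principle comparisons are on effectively bounded regions and go through.
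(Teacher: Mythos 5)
Your proposal follows essentially the same skeleton as the paper's proof: derive the weak identity \eqref{weak_epsilon}, bootstrap with elliptic/Schauder estimates to get $C^{2}$ and the strong equation, control the behavior as $y\to\infty$ using that $u$ is harmonic with finite Dirichlet energy on $\{y>A/B\}$, prove $0\le u\le 1$, and finally get strict positivity from the strong maximum principle after factoring $\beta_\e(u)=f(u)\,u$ (your $\Delta u-c\,u=0$ with $c\ge 0$ bounded is exactly this; it is cleaner to define $c:=f(u)(A-By)_+$ with $f(t)=\beta_\e(t)/t$ extended smoothly, so that $c$ is bounded on the whole cylinder rather than only ``where $u>0$''). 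The genuine differences are in the middle: the paper proves boundedness by showing the slice average of $u_y$ must vanish (else $\nabla u\notin L^2$), then Poincar\'e plus the mean value property, and it obtains both $u\ge 0$ and $u\le 1$ from one Caccioppoli argument, testing with $\eta^2(-u)_+$ and $\eta^2(u-1)_+$ and sending the cutoff to infinity; you instead use the Fourier/separation-of-variables description on $\{y>A/B\}$ (which yields the stronger statement that $u$ is exponentially close to a constant $c$) and then argue $u\le 1$ by subharmonicity and the maximum principle. Your $u\ge 0$ step is the paper's argument in all but notation.

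The one step that is not yet a proof is the claim that the far-field constant satisfies $c\le 1$ ``by comparison'': comparing $u$ with the constant $1$ on $\T\times(0,R)$ is circular, since it needs $u\le 1$ on $\{y=R\}$, i.e.\ essentially $c\le 1$, and a bounded subharmonic function on an unbounded cylinder need not attain its supremum on $\{y=0\}$ without a further argument. Two short repairs: (i) a Phragm\'en--Lindel\"of barrier --- once you know $u$ is bounded, compare with the harmonic function $1+\delta y$ on $\T\times(0,R)$ for $R$ large and let $\delta\to 0$, which gives $u\le 1$ everywhere (and hence $c\le 1$); or (ii) the paper's route, testing \eqref{weak_epsilon} with $\eta^2(u-1)_+$ (which vanishes on $\{y=0\}$), for which boundedness of $u$ is all that is needed to make the cutoff terms vanish in the limit. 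With either insertion your argument closes; as written, the key comparison at infinity --- which you yourself correctly identified as the main obstacle --- is left unjustified.
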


\begin{proof}
	For any $v \in W^{1, 2}_0(\T \times [0, T])$, we have that
	\begin{equation}\label{weak_epsilon}
		\int \left(\nabla u \cdot \nabla v + \b_\e(u) (A - B y)_+ v\right) = 0
	\end{equation}
	from the outer variation condition. As $\b_\e$ is bounded, Schauder estimates give that $u \in C^{1, \alpha}$, and then in $C^{2, \alpha}$ for any $\alpha \in (0, 1)$; this also immediately gives the strong form of the PDE above.

	Notice that $u$ is harmonic for $y > A/B$. Any harmonic function on such a half-cylinder which has $\nabla u \in L^2$ must be bounded: $\sup_{\T \times [0, \infty)}|u| \leq C$. Indeed,
	\[
		\frac{d}{dy}\int_{\T} u(x, y) dx = \int_{\T} u_y(x, y) dx = \int_{\T} u_y(x, A/B)dx \text{ for } y>A/B
	\]
	from the divergence theorem. If $m = \int_{\T} u_y(x, A/B) dx \neq 0$, then
	\[
		|m| = \left|\int_{\T} u(x, k + 1) dx - \int_{\T} u(x, k) dx\right| \leq \sqrt{\int_{\T \times (k, k+1)} |u_y|^2 },
	\]
	and squaring and summing in $k$ violates the fact that $\nabla u \in L^2$. Therefore $q = \int_{\T} u(x, y) dx$ is constant, and so the Poincar\'e inequality gives
	\[
		\int_{\T \times (y, y+1)}|u - q|^2 \leq \int_{\T \times (y, y+1)} |\nabla u|^2 \leq C \text{ for } y\geq A/B.
	\]
	Then
	\[
		\sup_{\T \times [y+1/4, y+3/4]} |u| \leq C
	\]
	from the mean value property, for any $y > A/B$. On the compact region $\T \times \{y \leq A/B + 1/2\}$ we already have that $u$ is bounded from the $C^2$ estimate above.
	
	Let $v = (-u)_+$ or $(u - 1)_+$. Noticing that $v(x,0)=0$ and using $h = \eta^2 v$ as a test function in \eqref{weak_epsilon}, where $\eta = \eta(y)$ is a smooth decreasing cutoff function which is $1$ for $y \leq R$, $0$ for $y \geq R + 1$, and has $|\eta'|\leq 2$, we have the standard energy identity
	\[
		\int \eta^2 |\nabla v|^2 = -2 \int v \eta \nabla v \cdot \nabla \eta \leq C \sqrt{\int_{\T \times [R, R+1]} |\nabla v|^2 }
	\]
	using the boundedness of $u$. However, $|\nabla v|\leq |\nabla u|$, and $\nabla u\in L^2$, so the right-hand side must go to $0$ as $R\rightarrow \infty$. From monotone convergence on the left, then,
	\[
		\int |\nabla v|^2 = 0,
	\]
	and $0 \leq u \leq 1$.

	From the assumption that $\cB(0) = 0$ and $\cB$ is smooth, it is possible to write $\b_\e(t) = f(t)t$ for some smooth non-negative $f$, at which point $u(x, y) = 0$ implies $u \equiv 0$ by the strong maximum principle, contradicting that $u(x, 0) = 1$. Likewise $u(x, y) = 1$ implies $u \equiv 1$, which is an outer variation critical point.
\end{proof}

\begin{lemma}[Free boundary condition] \label{lem:FB}
	Let $u$ be an inner variation critical point of $E$. Then  $0 \leq u(x, y) \leq 1$ for $y > 0$. At a point $(x,y) \in \p \{u > 0\}$ where $\{u > 0\}$ is locally a smooth domain (i.e. it lies to one side of a smooth curve), we have that
	\[
		|\nabla u(x, y)|^2 = (A - B y)_+,
	\]
	where the derivative is understood to be from inside the domain.
\end{lemma}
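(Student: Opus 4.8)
The plan is to handle the two assertions separately: first the bound $0 \le u \le 1$, then the Bernoulli condition, which I will read off from the inner variation.

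For the boundedness, I would begin by recording the elementary fact that a continuous function $w \ge 0$ which is harmonic on the open set $\{w>0\}$ and vanishes on its complement is automatically subharmonic on its domain: comparing $w$ on a small ball $B \subset\subset \T \times (0,\infty)$ with the harmonic replacement $h$ agreeing with $w$ on $\partial B$, one has $h \ge 0$, and $w-h$ is harmonic on $B \cap \{w>0\}$ with boundary values $\le 0$ (it is $0$ on $\partial B \cap \{w>0\}$ and equals $-h \le 0$ on $\overline{B} \cap \partial \{w>0\}$, since $w$ is continuous and vanishes there), so $w \le h$ on $B$ and the sub-mean-value inequality follows. Applying this to $w = (u-1)_+$, which by assumption (2) is harmonic on $\{u > 1\} \subseteq \{u \ne 0\} \cap \T \times (0,\infty)$, and likewise to $w = (-u)_+$, both functions are subharmonic on $\T \times (0,\infty)$, satisfy $|\nabla w| \le |\nabla u| \in L^2$, and vanish on $\T \times \{0\}$ because $u(\cdot,0) = 1$. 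The next step is to study the periodic average $Q(y) := \int_{\T} w(x,y)\,dx$: integrating the $x$-Laplacian over $\T$ kills it, so $Q'' \ge 0$ distributionally, i.e.\ $Q$ is convex; moreover $Q \ge 0$, $Q(0) = 0$, and Cauchy--Schwarz on $\T$ gives $Q'(y)^2 \le \int_{\T} |\partial_y w(x,y)|^2\,dx$, hence $\int_0^\infty Q'^2 \le \|\nabla w\|_{L^2}^2 < \infty$. A convex function on $[0,\infty)$ with square-integrable derivative must have $Q' \le 0$ everywhere (otherwise $Q'$ stays above a positive constant), so $Q$ is nonincreasing; with $Q \ge 0$ and $Q(0) = 0$ this forces $Q \equiv 0$, and then $w \equiv 0$. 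Thus $0 \le u \le 1$.

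For the free boundary condition, fix $(x_0,y_0) \in \partial\{u>0\}$ and a neighborhood $N \subset\subset \T \times (0,\infty)$ in which $\{u>0\}$ is one side of a smooth curve $\Gamma$. Since $u$ is continuous, $\ge 0$, positive on one side of $\Gamma$ and (being $0$ off $\{u>0\}$) zero on the other, $u = 0$ on $\Gamma \cap N$; boundary Schauder estimates for the harmonic function $u$ on $U := \{u>0\} \cap N$ with this smooth, vanishing boundary datum give $u \in C^2(\overline{U})$ locally. For $V \in C_c^\infty(N;\R^2)$ with flow $\phi_t$, I would compute $\partial_t E[u \circ \phi_t^{-1}]|_{t=0}$ after the change of variables $x = \phi_t(z)$. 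The Dirichlet term yields the Hadamard-type expression $\int_U (|\nabla u|^2 \dvg V - 2\langle DV \nabla u, \nabla u\rangle)$; using $\Delta u = 0$ in $U$ and $\nabla u = (\partial_\nu u)\nu$ on $\Gamma$ (the tangential derivative vanishes there), an integration by parts with the stress-energy tensor collapses this to $-\int_{\Gamma \cap N} |\nabla u|^2\,(V\cdot \nu)\,d\mathcal{H}^1$, with $\nu$ the outer normal of $U$. For the potential term, $\chi_{\{u \circ \phi_t^{-1} > 0\}} = \chi_{\{u>0\}} \circ \phi_t^{-1}$, so it becomes $\int_{\{u>0\}} (A - B(\phi_t)_2)_+\,|\det D\phi_t|\,dz$, and since $\partial_y (A-By)_+ = -B\,\mathbf{1}_{\{y < A/B\}}$ the $t$-derivative of the integrand at $t=0$ is exactly $\dvg\big((A-By)_+ V\big)$, whose integral over $U$ is $\int_{\Gamma \cap N} (A-By)_+ (V\cdot\nu)\,d\mathcal{H}^1$. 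Adding the two and invoking condition (3) (legitimate since $V$ is supported in $\T \times (0,\infty)$), I get $\int_{\Gamma \cap N} \big((A-By)_+ - |\nabla u|^2\big)(V\cdot\nu)\,d\mathcal{H}^1 = 0$; as $V\cdot\nu$ sweeps out all of $C_c^\infty(\Gamma \cap N)$, the integrand vanishes, which is the claimed identity at $(x_0,y_0)$.

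I expect the main technical obstacle to be controlling $u$ as $y \to \infty$ in the boundedness step: on the infinite half-cylinder there is no maximum principle to invoke directly, so the decay has to be extracted from $\|\nabla u\|_{L^2} < \infty$, which is precisely what the convexity-of-the-average argument accomplishes. The free boundary computation, by contrast, is essentially bookkeeping once one has the interior harmonicity and the $C^2$ regularity of $u$ up to the smooth part of $\partial\{u>0\}$ needed to justify the stress-energy identity and the divergence theorem on $U$; the only wrinkle is that the weight $(A-By)_+$ is merely Lipschitz across $\{y = A/B\}$, which is harmless since the divergence theorem and differentiation under the integral remain valid for Lipschitz weights (and, as a byproduct, the identity forces $|\nabla u| = 0$ at any smooth free boundary point with $y \ge A/B$, so Hopf's lemma shows no such points exist).
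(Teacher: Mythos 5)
Your proposal is correct, and both halves are sound; the interesting difference is in the boundedness step. The paper disposes of $0\le u\le 1$ by pointing back to the ``global weak maximum principle'' of Lemma \ref{lem:bounded}, i.e.\ the Caccioppoli-type cutoff argument with test function $\eta^2 v$, $v=(-u)_+$ or $(u-1)_+$; carried over to an inner variation critical point that sketch needs some care, since such a $u$ is only harmonic on $\{u\ne 0\}$ (so the ``harmonic above $y=A/B$, hence bounded'' preliminary step of Lemma \ref{lem:bounded} is not directly available) and the cutoff estimate there uses boundedness of $u$. Your route --- subharmonicity of the truncations $(u-1)_+$ and $(-u)_+$ via harmonic replacement, then convexity of the periodic slice average $Q(y)=\int_\T w(x,y)\,dx$ combined with $Q(0)=0$, $Q\ge 0$ and $\int_0^\infty (Q')^2\le\|\nabla w\|_{L^2}^2<\infty$ --- is a genuinely different and self-contained mechanism: it needs only local Lipschitz continuity, harmonicity on $\{u\ne 0\}$ and $\nabla u\in L^2$, and never requires an a priori $L^\infty$ bound or global harmonicity at large $y$, which is exactly the technical obstacle you identified. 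For the Bernoulli condition your argument is the same one the paper invokes by citation (normal/domain variations, cf.\ Alt--Caffarelli), and it matches the first-variation formula computed later in Lemma \ref{lem:variationformulas}: the Hadamard expansion of the Dirichlet term collapses on $\Gamma$ to $-\int_{\Gamma\cap N}|\nabla u|^2\,(V\cdot\nu)\,d\mathcal{H}^1$ using $\nabla u=(\partial_\nu u)\nu$, the volume term gives $\int_{\Gamma\cap N}(A-By)_+(V\cdot\nu)\,d\mathcal{H}^1$, and arbitrariness of $V\cdot\nu$ yields the identity; your observation that the Lipschitz weight $(A-By)_+$ causes no trouble across $\{y=A/B\}$ is correct (and the Hopf-lemma aside about nonexistence of smooth free boundary points with $y\ge A/B$ is accurate but not needed for the statement). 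The only prerequisites you should state explicitly are the ones you in fact use: $u=0$ on $\Gamma$ (from $u\ge 0$ and continuity) and boundary Schauder regularity of the harmonic function $u$ up to the smooth curve, which justify the stress-energy integration by parts.
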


We will discuss the question of whether $\p \{u > 0\}$ is smooth later, in Section \ref{s:regularity}. We will not usually need this strong form of the free boundary condition.

\begin{proof}
	That $0 \leq u \leq 1$ follows as for the semilinear case using the global weak maximum principle derived there. The free boundary condition at smooth points can be derived by performing small normal variations and is well-known, see \cite{ac}.
\end{proof}

Finally, we make a remark about the use of the compactness theorem in \cite{kw} below.

\begin{remark}\label{r:compactness} In the proofs of our main theorems below, we will need to invoke the main result of \cite{kw}, Theorem 1.2, but applied to the energy $E$. This energy contains a weight dependent on $y$, and so this constitutes a generalization of \cite{kw}. Such a generalization is valid, the central point being that the frequency formula used is actually true with straightforward modifications for $E$; this was already observed in \cite[Section 7]{vw}. However, a detailed treatment of such a generalization is not currently available in the literature. In a forthcoming work with Mark Vaysiberg, we will generalize \cite[Theorem 1.2]{kw} to a much wider class of functionals, including $E$. That paper is currently in preparation, and once it is available this remark will be updated with the reference. Usage of \cite[Theorem 1.2]{kw} as applied to $E$ in the text will cross-reference this remark.
\end{remark}

\section{The energy landscape}\label{s:energy}

In this section, we study the energy landscape of $E$ and $E_\e$ with the aim of finding a usable mountain-pass configuration. The limit problem is simpler in this regard, so we start by working with it directly before proceeding to the semilinear approximation. 

\subsection{One-dimensional critical points and minimizers of the energy}

We begin by classifying all minimizers of $E$ and all one-dimensional solutions.
The following two lemmas are similar to the analysis in \cite[Section 5]{aramaleoni}, with different notation and treatment of ``nonphysical'' solutions like $U_\infty$ below. We present the arguments in full for clarity.

\begin{lemma}[One-dimensional solutions]\label{lem:1d}
	Let $u \in H$ be an inner variation critical point of $E$, and assume $u$ is independent of $x$. Then:
	\begin{itemize}
		\item If $B < 2 (\frac{A}{3})^{3/2}$, then $u$ is one of the following three functions:
		\[
		\begin{cases}
			&U_\infty(y) = 1 \\
			&U_+(y) = (1 - y/Y_+)_+ \\
			&U_-(y) = (1 - y/Y_-)_+,
		\end{cases}
		\]
		where $0 < Y_- < \frac{2}{3}\frac{A}{B} < Y_+ < \frac{A}{B}$. The functions $U_-, U_\infty$ are local minimizers of $E$ (with respect to $1D$ variations), while $U_+$ is not a minimizer.
		\item If $B = 2 (\frac{A}{3})^{3/2}$, then the only possibilities for $u$ are $U_\infty$ and $U_0 = (1 - y/Y_0)_+$, where $Y_0 = \frac{2}{3}\frac{A}{B}$. $U_0$ is not a minimizer.
		\item If $B > 2 (\frac{A}{3})^{3/2}$, then $u = U_\infty$ (there are no other critical points).
	\end{itemize}
\end{lemma}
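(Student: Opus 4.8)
The plan is to reduce everything to a one-dimensional ODE/variational analysis. Since $u$ is independent of $x$, write $u = u(y)$ on $[0,\infty)$ with $u(0) = 1$ and $u \in \dot{W}^{1,2}$, so $u' \in L^2$; by Lemma~\ref{lem:FB} we also have $0 \le u \le 1$. On the open set $\{u > 0\} \cap (0,\infty)$, harmonicity (assumption (2) in the definition of inner variation critical point) forces $u'' = 0$, so $u$ is affine on each connected component of $\{u>0\}$. Since $u$ is locally Lipschitz and $u(0)=1$, the leftmost component of $\{u>0\}$ is an interval $(0, Y)$ (or all of $(0,\infty)$) on which $u(y) = 1 - y/Y$ (resp. $u \equiv 1$). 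The first task is to rule out further structure: if $Y < \infty$, I claim $u \equiv 0$ on $(Y,\infty)$. Indeed on any later component $(a,b)$ with $0 < a$, $u$ is affine, vanishes at both endpoints (if $b<\infty$) or has $u' \in L^2$ forcing $u$ bounded hence eventually... — cleanest is to invoke the weak maximum principle / the argument already run in Lemma~\ref{lem:bounded}: $u$ harmonic on $\{u\neq 0\}$ with the given boundary data and $\nabla u \in L^2$ leaves no room for a nontrivial bump detached from $\{y=0\}$. So the only candidates are $U_\infty \equiv 1$ and $U_Y := (1 - y/Y)_+$ for some $Y \in (0,\infty)$.

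Next I pin down which $Y$ are admissible using the inner variation condition (3), which for $x$-independent $u$ and a vertical vector field $V = \zeta(y)\,\partial_y$ (cut off near $y=0$) yields exactly the free boundary / Pohozaev-type identity at $y=Y$, namely $|u'(Y)|^2 = (A - BY)_+$ (this is the content of Lemma~\ref{lem:FB}, but I would derive it directly from (3) here since it is a one-line computation: differentiating $E[u\circ\phi_t^{-1}]$ gives a bulk term that vanishes by harmonicity plus a boundary term $\big(|u'(Y)|^2 - (A-BY)_+\big)\zeta(Y)$). With $u'(Y) = -1/Y$ this is the algebraic equation
\begin{equation*}
	\frac{1}{Y^2} = A - BY, \qquad 0 < Y < \frac{A}{B},
\end{equation*}
i.e. $BY^3 - AY^2 + 1 = 0$. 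Now it is a routine real-algebra exercise (Descartes / discriminant of the depressed cubic) to count roots in $(0, A/B)$: setting $g(Y) = BY^3 - AY^2 + 1$, one has $g(0) = 1 > 0$, $g(A/B) = 1 > 0$, $g$ has a local max at $Y=0$ and a local min at $Y_* = \frac{2A}{3B} \in (0, A/B)$, and $g(Y_*) = 1 - \frac{4A^3}{27B^2}$. Hence: if $g(Y_*) < 0$, i.e. $B < 2(A/3)^{3/2}$, there are exactly two roots $Y_- < Y_* < Y_+$ in $(0, A/B)$ (and these lie on either side of $Y_* = \frac{2}{3}\frac{A}{B}$ as claimed); if $g(Y_*) = 0$, i.e. $B = 2(A/3)^{3/2}$, the unique root is $Y_0 = Y_* = \frac{2}{3}\frac{A}{B}$ (a double root, which does not correspond to an additional critical point beyond multiplicity); if $g(Y_*) > 0$ there are no roots and only $U_\infty$ survives. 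This gives the trichotomy of the statement, with $U_\pm = U_{Y_\pm}$, $U_0 = U_{Y_0}$.

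Finally, the stability claims. For $U_\infty \equiv 1$: it is a strict local minimizer among $1$D competitors $v = 1 + w$ with $w \le 0$ since $E$ lowers only by creating a zero set, which costs positive weight $(A-By)_+$ near $y=0$ while saving at most the Dirichlet energy — a direct estimate (or: $U_\infty$ is the harmonic extension and the perturbation of the free boundary term is first order positive) shows $E[v] \ge E[U_\infty]$ for small perturbations. For $U_-$ versus $U_+$: compute the second variation of the reduced $1$D energy $e(Y) := E[U_Y] = \frac{1}{Y} + \int_0^Y (A - By)\,dy = \frac1Y + AY - \frac{B}{2}Y^2$ along the one-parameter family — here I must be slightly careful that moving the free boundary is the relevant variation and that it captures local minimality correctly, which it does because any $1$D competitor can be decreased in energy by replacing it with the affine function with the same zero set. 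Then $e'(Y) = -\frac{1}{Y^2} + A - BY$, which vanishes exactly at $Y_\pm$ (consistent with the above), and $e''(Y) = \frac{2}{Y^3} - B$; one checks $e''(Y_-) > 0$ and $e''(Y_+) < 0$ using $Y_- < \frac{2}{3}\frac{A}{B} < Y_+$ together with the relation $\frac{1}{Y^2} = A - BY$ (which gives $\frac{2}{Y^3} - B = \frac{2(A-BY)}{Y} - B = \frac{2A - 3BY}{Y}$, positive iff $Y < \frac{2A}{3B}$). So $U_-$ is a local min of $e$ and $U_+$ (resp. $U_0$, a degenerate inflection) is not. Promoting "local min of $e$" to "local min of $E$ among $1$D variations" uses the replacement argument just mentioned; this is the one spot where I'd expect to spend real care, though it is standard for Alt–Caffarelli-type functionals. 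The main obstacle overall is not any single deep step but assembling these pieces cleanly — especially the rigorous derivation of the free boundary condition from inner variations for possibly non-smooth $1$D profiles and the reduction of full $1$D stability to the scalar function $e(Y)$.
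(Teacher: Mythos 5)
Your reduction to affine profiles, the derivation of the free boundary relation $\tfrac{1}{Y^{2}}=A-BY$ from a vertical inner variation, the root count for the cubic, and the sign analysis of $e''(Y_\pm)$ via the reduced energy $e(Y)=\tfrac1Y+AY-\tfrac B2 Y^{2}$ all match the paper's argument (the paper writes $\partial_Y e=\tfrac{1}{Y^{2}}p(Y)$ with $p(Y)=AY^{2}-BY^{3}-1$, which is the same computation). The small hand-wave about detached bumps is harmless: an affine function vanishing at both endpoints of a component is identically zero, and an affine function on an unbounded component with derivative in $L^{2}$ is constant, hence zero.

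The genuine gap is in the last step, where you promote ``$Y_-$ is a local minimum of $e$'' to ``$U_-$ is a local minimizer of $E$ among $1$D competitors.'' The local minimality asserted in the lemma (and needed later, in Lemma \ref{lem:localmin} and Lemma \ref{lem:stable}) is with respect to the $\dot W^{1,2}$ seminorm: $\int|u_y-(U_-)_y|^{2}<\delta$ should imply $E[u]\ge E[U_-]$. Your proposed mechanism -- replace the competitor $u$ by the affine profile $u_Y$ with $Y=\inf\{y:u(y)=0\}$ and then invoke the local minimality of $e$ at $Y_-$ -- fails exactly when $Y$ lies outside the basin of $Y_-$, and this can happen for competitors that are $\dot W^{1,2}$-close to $U_-$: take, e.g., $u=\max\bigl(U_-,\epsilon(1-y/L)_+\bigr)$ with $\epsilon$ tiny and $L$ large, whose gradient is close to that of $U_-$ but whose zero set starts at $L\gg Y_-$ (even $L>A/B$). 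For such $Y$ the bound $E[u]\ge E[u_Y]$ is useless, since $E[u_Y]$ can be below $E[U_-]$ (this occurs for parameters with $E[U_\infty]\le E[U_-]$, which the paper explicitly does not exclude). The paper closes this by a case split: if $Y<Y_-+C_*$ the replacement argument works, while if $Y>Y_-+C_*$ it builds a different competitor (truncating $u$ at the small level $u(Y_-)$ and rescaling) and exploits that $u>0$ on all of $(Y_-,Y_-+C_*)$, so the volume term pays an extra $c(A,B)\,C_*$ that dominates the $O(\sqrt\delta)$ losses; this is the ingredient missing from your sketch. A minor additional inaccuracy: for $U_\infty$ the point is not that a zero set ``near $y=0$'' costs weight, but that any competitor with small gradient distance to $U_\infty$ can only vanish at some $Y\gg A/B$, where the weight is already zero, so $E[u]\ge E[u_Y]\ge E[U_\infty]$; the conclusion is right but the stated mechanism is not.
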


\begin{proof}
	As $u$ is harmonic where positive, it has a very simple structure: it is either the positive part of a linear function of $y$, $u(y)= u_Y(y) = (1 - y/Y)_+$, or $u=U_\infty$. Here $Y > 0$, for otherwise this function does not have $(u_Y)_y \in L^2$. The constant function $U_\infty$ is always a solution, and has $E[U_\infty] = \frac{A^2}{2B}$. For the others, the inner variation condition is equivalent to verifying that at the single free boundary point $y = Y$, we have that
	\[
		\frac{1}{Y^2} = |(u_Y)_y(Y)|^2 = (A - B Y)_+,
	\]
	so $Y < \frac{A}{B}$ and $p(Y) := A Y^2 - B Y^3 - 1 = 0$. Note also that
	\[
		e(Y) := E[u_Y] = \frac{1}{Y} + A Y - \frac{B}{2}Y^2,
	\]
	and so $\p_Y e = \frac{1}{Y^2} p(Y) = 0$ is equivalent to $p(Y) = 0$. Any ($x$-independent) minimizer of $E$ must be one of the $u_Y$ or $U_\infty$, as $E[u]$ is in the case $u\ne U_\infty$ lowered by replacing $u$ by $u_Y$ with $Y$ the smallest number for which $u(Y) = 0$. So $u_Y$ is a local minimizer of $E$ if and only if $Y$ is a local minimum of $e(Y)$.

	The roots of $p$ can be written in a closed-form expression, but it will be more useful for us to write
	\[
		p(Y) = 0 \qquad \Leftrightarrow \qquad Y^2 (A - B Y) = 1.
	\]
	This always has one negative solution, which is not relevant here, and $0, 1$, or $2$ solutions between $0$ and $A/B$. The maximum of $Y^2 (A - B Y)$ over $[0, A/B]$ is $ \frac{4}{27} \frac{A^3}{B^2}$, attained at $Y = \frac{2}{3} \frac{A}{B}$, leading to the following characterization:
	\begin{itemize}
		\item If $B < 2 (\frac{A}{3})^{3/2}$, there are two values $0 < Y_- < \frac{2}{3}\frac{A}{B} < Y_+ < \frac{A}{B}$ for which $p(Y) = 0$, with $p$ positive between them and negative for other positive $Y$. This means that for both $Y = Y_-, Y_+$, $u_Y$ is an inner variation critical point. 
		\item If $B =  2 (\frac{A}{3})^{3/2}$, there is exactly one value $Y = \frac{2}{3}\frac{A}{B}$ for which $p(Y) = 0$, still corresponding to an inner variation critical point for the same reason.
		\item If $B >  2 (\frac{A}{3})^{3/2}$, there are no positive roots of $p$ and no domain variation critical points of this type.
	\end{itemize}

	In the first case, as $\p_Y e$ is a positive multiple of $p$, it is easy to see that $Y_+$ is a local maximum and $Y_-$ is a local minimum of $e$. Whether or not $E[u_{Y_-}] < E[u_\infty]$ depends on the parameters $A, B$ as well, but we will not find it necessary to explicitly classify this.
	
	Let us verify that $U_-$ and $U_\infty$ are local minimizers to $E$: more precisely, we will show that if for a $u(y) \in H$
	\[
		\int |u_y - (U_-)_y|^2 < \d = \d(A, B),
	\]
	then $E[u] \geq E[U_-]$ (and then similarly for $U_\infty$).

	Let $Y = \inf \{y : u(y) = 0\}$. We claim that $Y > Y_- - C(A, B) \sqrt{\d}$: indeed,
	\[
		\frac{1}{Y_-} (Y_- - Y)  \leq |u(Y) - U_-(Y)| \leq \sqrt{\int_0^Y | (u - U_-)_y|^2} \sqrt{Y} < \sqrt{\d Y},
	\]
	so if $Y < Y_-$ we get that $Y \geq Y_- - C(Y_-)\sqrt{\d}$. By a similar argument, we also have that
	\[
		u(Y_-) \leq C \sqrt{\d}.
	\]
	If $Y < Y_- + C_*(A, B)$, we can use $l = (1 - y/Y)_+$ as a competitor:
	\[
		E[u] \geq E[l] \geq E[U_-],
	\]
	with the second inequality coming from the local minimality of $Y_-$ for $e$ above. On the other hand, if $Y > Y_- + C_*$, we instead use as a competitor 
	\[
		v(y) = \begin{cases}
		\frac{(u(y) - u(Y_-))_+}{1 - u(Y_-)} & y \leq Y_- \\
		0 & y > Y_-.
	\end{cases}
	\]
	Then using that $|v_y| \leq (1 + C \sqrt{\d}) |u_y|$,
	\[
		E[u] \geq \int_0^{Y_-} \left(u_y^2 + (A - B y)\right) + \int_{Y_-}^{Y_- + C_*} (A - B y)_+ \geq (1 - C \sqrt{\d}) E[v] + c(A, B) C_* \geq (1 - C \sqrt{\d}) E[U_-] + c C_*.
	\]
	The final inequality used that $U_-$ is harmonic on $[0, Y_-]$ and $U_- = v$ outside of $(0, Y_-)$, so $E[U_-] \leq E[v]$. Provided $\d$ is small relative to the other constants, this gives $E[u] > E[U_-]$.

	For the local minimality of $U_\infty$, the situation is in fact simpler: an analogous argument gives that if $\d$ is small enough, $Y \gg A/B$ in this case. Then $E[u] \geq E[l] \geq E[U_\infty]$ concludes the argument, as indeed $E[l] \geq E[U_\infty]$ for any $Y > A/B$. We omit the details.
\end{proof}

\begin{lemma}[Local minimality]\label{lem:localmin}

	\begin{enumerate}
		\item Let $u \in H$ be a minimizer of $E$, i.e.
		\[
			E[u] = \inf\{E[v] : v \in H\}.
		\]
		Then $u$ is independent of $x$, so in particular it is either $U_-$ or $U_\infty$ from Lemma \ref{lem:1d}.
		\item Assume that $B < 2 (\frac{A}{3})^{3/2}$. There is a $\d_0 = \d_0(A, B)$ such that if
		\[
			\|\nabla u - \nabla U_-\|_{L^2}  \leq \d_0 \qquad (\|\nabla u - \nabla U_\infty\|_{L^2}  \leq \d_0),
		\]
		then
		\[
			E[U_-] \leq E[u] \qquad (E[U_\infty] \leq E[u]),
		\]
		with equality only if $u = U_-$ ($u = U_\infty$).
	\end{enumerate}
\end{lemma}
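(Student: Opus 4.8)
The plan is to reduce both statements to the one–dimensional picture of Lemma~\ref{lem:1d} by slicing in $y$. For $u\in H$ set $w_x(y)=u(x,y)$; discarding the nonnegative term $\int_{\T\times[0,\infty)}|u_x|^2$,
\[
  E[u]\;=\;\int_{\T}\int_0^\infty|u_x(x,y)|^2\,dy\,dx\;+\;\int_{\T}E_{1D}[w_x]\,dx,\qquad
  E_{1D}[w]:=\int_0^\infty\!\!\Big(|w'|^2+\chi_{\{w>0\}}(A-By)_+\Big)dy .
\]
Each $w_x$ lies in $\dot{W}^{1,2}(0,\infty)$ with $w_x(0)=1$. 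Replacing $w$ by $(1-y/Y(w))_+$ with $Y(w)=\inf\{y:w(y)=0\}$ never increases $E_{1D}$, so $E_{1D}[w]\ge e(Y(w))$, where $e$ is extended to $(0,\infty]$ by $e(Y)=\tfrac1Y+\int_0^{\min(Y,A/B)}(A-Bt)\,dt$ and $e(\infty)=\tfrac{A^2}{2B}$; the shape of $e$ recorded in Lemma~\ref{lem:1d} gives $\inf_{\dot{W}^{1,2}}E_{1D}=\min(E[U_-],E[U_\infty])$. For part~(1) this yields $E[u]\ge\int_{\T}\int_0^\infty|u_x|^2+\min(E[U_-],E[U_\infty])$, so $\inf_HE=\min(E[U_-],E[U_\infty])$; equality in the slicing step forces $u_x\equiv 0$, hence $u$ is independent of $x$, and then Lemma~\ref{lem:1d} (in which $U_+$ is not a minimizer) identifies $u$ as $U_-$ or $U_\infty$.

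For part~(2), one of the two statements is free: if $E[U_-]\le E[U_\infty]$ then by part~(1) $E[u]\ge E[U_-]$ for \emph{all} $u\in H$ with equality only at $U_-$; symmetrically the statement for $U_\infty$ is trivial when $E[U_\infty]\le E[U_-]$. So the only real content is the non-global case, say $E[U_\infty]<E[U_-]$, in which $U_-$ must still be shown to be a local minimizer. Slicing alone is too lossy here: the ``bad'' columns, those with $E_{1D}[w_x]<E[U_-]$, form a set of measure $O(\d_0^2)$ by the one–dimensional local minimality in Lemma~\ref{lem:1d}, yet each can carry a deficit up to $E[U_-]-E[U_\infty]$, so their total contribution is a deficit of order $O(\d_0^2)$, matching the $\dot{W}^{1,2}$ budget. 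To recover this one wants a \emph{quantitative} form of the one-dimensional local minimality of $U_-$: constants $\d_1,c_0>0$ (depending on $A,B$) with
\[
  \|w'-U_-'\|_{L^2}^2\le\d_1\;\Longrightarrow\;E_{1D}[w]-E[U_-]\ge c_0\,\|w'-U_-'\|_{L^2}^2 ,
\]
together with the dichotomy that $E_{1D}[w]<E[U_-]$ forces $w(Y_-)\ge c_0$. Both rest on $Y_-$ being a \emph{nondegenerate} local minimum of $e$, i.e.\ $e''(Y_-)=\tfrac{2}{Y_-^3}-B>0$; since $p(Y_-)=0$ gives $BY_-^3=AY_-^2-1$, this is exactly $Y_-<\sqrt{3/A}$, which follows from $B<2(A/3)^{3/2}$ because $p(\sqrt{3/A})=2-3\sqrt3\,B/A^{3/2}>0$ places $\sqrt{3/A}$ strictly between the two roots $Y_-<Y_+$.

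With these in hand the assembly is: split off $Q=\T\times[0,Y_-]$, where $(A-By)_+=A-By$ and $U_-$ vanishes on the top face; use harmonic replacement in $Q$ (producing $\int_Q|\nabla U_-|^2$, a boundary term $-\tfrac{2}{Y_-}\int_{\T}u(\cdot,Y_-)$, and a Dirichlet extension energy $\gtrsim\|u(\cdot,Y_-)\|_{L^2(\T)}^2$) and in $\T\times[Y_-,\infty)$ (where $u$ must return to $0$, giving a further gain), then on the slices where $w_x$ is close to $U_-$ invoke the quadratic surplus $c_0\|w_x'-U_-'\|^2$, while on the remaining slices — which by the dichotomy have $u(\cdot,Y_-)$ bounded below, hence form a set of measure $O(\d_0^2)$ — absorb the $O(\d_0^2)$ deficit into $\int_{\T}\int_0^\infty|u_x|^2$ plus the Dirichlet gains just described. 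Making this absorption succeed, that is, checking that the second-order constants line up, is the step I expect to be the main obstacle, and it is precisely where the exact value of $e''(Y_-)$ — equivalently the hypothesis $B<2(A/3)^{3/2}$ — enters; the first-variation cancellation is what makes the comparison borderline. Finally, equality in the resulting bound forces $u_x\equiv 0$ and $E_{1D}[w_x]=E[U_-]$ on every slice, so $w_x=U_-$ by Lemma~\ref{lem:1d}, i.e.\ $u=U_-$ (and $u=U_\infty$ in the other case).
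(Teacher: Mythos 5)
Part (1) of your proposal is correct and is the same slicing argument as in the paper. Part (2), however, has a genuine gap in the only case that carries content (when $U_-$ is not the global minimizer). First, the quantitative one-dimensional stability on which your absorption scheme rests,
\[
\|w'-U_-'\|_{L^2}^2\le\delta_1 \;\Longrightarrow\; E_{1D}[w]-E[U_-]\ge c_0\,\|w'-U_-'\|_{L^2}^2,
\]
is false. Test it with $w=(1-y/(Y_-+h))_+$ for small $h>0$: then $E_{1D}[w]-E[U_-]=e(Y_-+h)-e(Y_-)\approx\tfrac12 e''(Y_-)h^2$, while $\|w'-U_-'\|_{L^2}^2\approx h/Y_-^2$ because of the strip $Y_-<y<Y_-+h$ on which $w'\approx-1/Y_-$ but $U_-'=0$. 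So the energy surplus is of order $\|w'-U_-'\|_{L^2}^4$, not $\|w'-U_-'\|_{L^2}^2$: the free-boundary displacement makes the energy degenerate relative to the $\dot W^{1,2}$ distance, which is exactly why your second-order bookkeeping is borderline. Second, even granting some corrected stability statement, the decisive step --- absorbing the $O(\delta_0^2)$ deficit of the bad columns into an $O(\delta_0^2)$ budget with matching constants via harmonic replacement and trace estimates --- is not carried out, and you yourself flag it as the expected main obstacle. As written, this half of the lemma is a plan, not a proof.

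The paper closes part (2) by a qualitative mechanism that avoids any constant-matching: it shows that bad slices cannot exist at all. By Chebyshev, slices with $\|\nabla u(x,\cdot)-\nabla U_-\|_{L^2}\le\delta$ are $\delta$-dense in $\T$, and on these Lemma \ref{lem:1d} gives $E[u(x,\cdot)]\ge E[U_-]$. If some slice $x$ had $E[u(x,\cdot)]<E[U_-]$, then its zero set and even its level set $\{u(x,\cdot)=\eta\}$ are pushed above $Y_-+c_3(A,B)$ (the slice's Dirichlet energy is at most $1/Y_--c_2$, forcing $Z\ge Y_-+c_3$), so $u(x,\cdot)\ge\eta$ on $[Y_-,Y_-+c_3]$, whereas a good slice $x'$ with $|x-x'|<\delta$ satisfies $u(x',\cdot)\le\eta/2$ there. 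Integrating $u_x$ across the horizontal gap gives $c(\eta,A,B)\le\sqrt{\delta\int u_x^2}\le\sqrt{\delta\,E[U_-]}$, impossible for $\delta$ small. Hence every slice has energy at least $E[U_-]$, and the slicing inequality from part (1) finishes, with equality forcing $u_x\equiv0$ and then $u=U_-$. Your ``dichotomy'' ($E_{1D}[w]<E[U_-]$ forces $w(Y_-)\ge c_0$) is precisely one half of this argument; what is missing in your write-up is playing it off against the nearby good slices through $\int u_x^2$, rather than through a quantitative surplus that, in the form you state, does not hold.
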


\begin{proof}
	We denote by $U$ whichever of $U_-$ or $U_+$ has a smaller value of $E$; by Lemma \ref{lem:1d} we have that
	\[
		E[U] \leq \int_0^\infty \left(|v_x|^2 + \chi_{\{v > 0\}}(A - B y)_+\right)
	\]
	for any $v \in \dot{W}^{1,2}([0, \infty))$ with $v(0) = 1$. Then
	\[
		E[u] = \int_{\T \times [0, \infty)} |u_x|^2 + \int_\T \int_{ [0, \infty)} \left(|u_y|^2 + \chi_{\{u > 0\}}(A - B y)_+\right) \geq \int_{ \T \times [0, \infty)} |u_x|^2 + E[U]
	\]
	by applying this on almost every ray $[0, \infty) \times \{x\}$. Then if $u$ is a minimizer, we have $u_x = 0$ a.e., which implies $u$ is independent of $x$.

	For (2) we prove the local minimality of $U_-$, as the other is similar. First, the 1D version of this statement has already been shown in Lemma \ref{lem:1d}. By choosing $\delta_0$ small, we can ensure that if $A = \{x : \|\nabla u(x, \cdot) - \nabla U_-(\cdot)\|_{L^2} > \delta \}$,
	\[
		|A| < \frac{\|\nabla u - \nabla U_-\|_{L^2}^2}{\delta^2} \leq \frac{\delta_0^2}{\delta^2} < \delta.
	\]
	On $\T \sm A$, we have that $E[u(x, \cdot)] \geq E[U_-]$ as long as $\delta$ is small enough.

	Let $V_a = (1 - y/a)_+$. The computation in Lemma \ref{lem:1d} shows that $E[V_a] > E[U_-]$ on a neighborhood $a \in (0, Y_- + c_1(A, B))$.
	
	Take an $x$ with $E[u(x, \cdot)] < E[U_-]$, and set $Y(x) = \inf\{y : u(x, y) = 0\}$, for $\eta$ small. Then $Y > Y_- + c_1$, for $E[U_-] > E[u(x, \cdot)] \geq E[V_Y]$, which is only possible for $Y$ large enough. We claim something similar holds for $Z = \inf\{y : u(x, y) = \eta\}$, where $\eta \ll 1$ is small. Indeed, as $E[U_-] > E[u(x, \cdot)]$,
	\[
		\int_0^Z |u_y(x, y)|^2 dy < \int_0^\infty |(U_-)_y|^2 - \int_{Y_-}^{Y_- + c_1} (A - B y)_+ < \int_0^\infty |(U_-)_y|^2 - c_2(A, B) = \frac{1}{Y_-} - c_2.
	\]
	On the other hand, as $u(x, 0) = 1$ and $u(x, Z) = \eta$,
	\[
		\int_0^Z |u_y(x, y)|^2 \geq \frac{(1 - \eta)^2}{Z},
	\]
	so
	\[
		Z \geq Y_- \frac{(1 - \eta)^2}{1 - c_2 Y_-} \geq Y_- + c_3(A, B)
	\]
	provided $\eta$ is small enough relative to $A, B$.

	We have shown that for any $x$ with $E[u(x, \cdot)] < E[U_-]$, $u(x, y) \geq \eta$ for $y < Y_- + c_3$ (independent of $\d$), and also that there is another $x' \in \T \sm A$ with $|x - x'| < \d$. At $x'$, we must have that
	\[
		|u(x', y) - U_-(y)| \leq \int_0^y |(u(x', t) - U_-(t))_y| dt \leq \sqrt{\d y}.
	\]
	So on $\{x'\} \times [Y_-, Y_- + c_3]$ where $U_- = 0$, $u(x', y) \leq \frac{\eta}{2}$ as long as $\d$ is taken small enough. Now integrate in $x$ along an interval $I$ with endpoints $x, x'$ and $|I| < \d$:
	\[
		c(\eta, A, B) \leq \int_{Y_-}^{Y_- + c_3}|u(x', t) - u(x, t)| dt \leq \int_{I} \int_{Y_-}^{Y_- + c_3}|u_x(s, t)| dt \leq \sqrt{\d \int u_x^2} \leq \sqrt{\d E[U_-]},
	\]
	at the very end supposing for contradiction that $E[u] < E[U_-]$. For $\d$ taken small enough, this is a contradiction. It follows that, in fact, $E[u(x, \cdot)] \geq E[U_-]$ for \emph{all} $x$, and so after integrating we get that $E[u] \geq E[U_-]$, with equality only if $u_x = 0$ almost everywhere.
\end{proof}

\subsection{Second variation and Morse index}

We now compute the Morse index of the solution $U_+$, as well as second variation formulas in a limited context. The second variation for Bernoulli-type problems is well known and often used in regularity theory or shape optimization. In this case, we will be interested in only the second variation around the flat solution $U_+$ (which simplifies the computation) and the \emph{structure} of the second inner variation in general (to make sure it is stable under limits).

\begin{lemma}[Morse index close to $U_+$]\label{lem:variationformulas}

	\begin{enumerate}
		\item Let $u$ be an outer variation critical point for $E_\e$. Then for any $v \in W^{1, 2}_0(\T \times (0, T))$, the mapping $t \mapsto E_\e[u + t v]$ is smooth near $0$ and
		\[
			\partial_{tt} E_\e[u + t v] |_{\{t = 0\}}  = \int \left(2|\nabla v|^2 + 2\b_\e^{'}(u) v^2\right).
		\]
		\item Assume that $B < 2 (\frac{A}{3})^{3/2}$. For a smooth function $g \in C^\infty(\T)$, define the vector field $V = (V^x, V^y) : \T \times [0, Y_+] \rightarrow \R^2$ via
		\[
			\begin{cases}
				V^x = 0 &\\
				V^y(x, 0) = 0 \\
				V^y(x, Y_+) = g \\
				\Delta V^y = 0 & \text{ on } \T \times (0, Y_+),
			\end{cases}
		\]
		and then extend $V$ smoothly to $\T \times [0, \infty)$ so that it has compact support in $\T \times [0, A/B)$.	Let $\phi_t$ be the flow of $V$. Then for small $t$, $\phi_t$ is a diffeomorphism, $t \mapsto E[U_+ \circ \phi_t^{-1}]$ is smooth, and
		\[
			\begin{cases}
				\partial_{t}  E[U_+ \circ \phi_t^{-1}] |_{\{t = 0\}} = 0,\\
				\partial_{tt}  E[U_+ \circ \phi_t^{-1}] |_{\{t = 0\}} = \int_{\{ y < Y_+\} } \frac{2}{Y^2_+} |\nabla V^y|^2   - B \int_{\T} g^2(x) \> dx.
			\end{cases}
		\]
		\item If $\e_k \searrow 0$ and $u_{k}$ are critical points of $E_{\e_k}$ converging in $H$ topology to $U_+$, and with $\chi_{\{ u_k > 0\} } \rightarrow \chi_{\{ U_+ > 0\} }$ in $L^1$, then
		\[
			\partial_{tt}  E_{\e_k}[u_k\circ \phi_t^{-1}] |_{\{t = 0\}} \rightarrow \partial_{tt}  E[U_+ \circ \phi_t^{-1}] |_{\{t = 0\}}
		\]
		for any smooth $g \in C^\infty(\T)$ as above.
		\item If 
		\begin{align*}
			2 \left(\frac{A}{B} - Y_+ \right) 2 \pi \coth(2 \pi Y_+) < 1\\
			(\text{equivalently  } 2 \frac{A}{B} 2 \pi \left( 1 - \frac{1}{3}\left(1+2 \cos\left( \frac{1}{3} \arccos\left( 1-\frac{27B^{2}}{2A^{3}}\right) \right)\right)\right)\times\\
			\coth\left(2 \pi \frac{1}{3} \frac{A}{B} \left(1+2 \cos\left( \frac{1}{3} \arccos\left( 1-\frac{27B^{2}}{2A^{3}}\right) \right)\right)\right) < 1)
		\end{align*}
		and $u_{k}$ are as above, then for $k$ large $u_k$ has Morse index at least $2$: i.e. there is a two-dimensional subspace $W$ of $\dot{W}^{1, 2}_0$ such that for any $v \in W\sm \{0\}$, $\partial_{tt}  E_{\e_k}[u_k + t v] |_{\{t = 0\}} < 0$.
	\end{enumerate}
\end{lemma}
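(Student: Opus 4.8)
The plan is to exhibit an explicit two-dimensional space of vector fields on which the second inner variation of $E$ at $U_+$ is negative definite, to transfer this negativity to the $u_k$ via part (3), and to convert the resulting second inner variations into second outer variations using that each $u_k$ is a critical point of $E_{\e_k}$. Concretely, I would evaluate the formula of part (2) with $g(x) = \cos(2\pi x)$ and with $g(x) = \sin(2\pi x)$. For $g(x) = \cos(2\pi x)$ the prescribed harmonic extension is $V^y(x,y) = \frac{\sinh(2\pi y)}{\sinh(2\pi Y_+)}\cos(2\pi x)$, so integrating by parts (the boundary term at $y=0$ drops since $V^y(\cdot,0)=0$),
\[
\int_{\{y < Y_+\}} |\nabla V^y|^2 = \int_\T g\,\partial_y V^y(x, Y_+)\,dx = 2\pi\coth(2\pi Y_+)\int_\T \cos^2(2\pi x)\,dx = \pi\coth(2\pi Y_+),
\]
hence $\partial_{tt} E[U_+\circ\phi_t^{-1}]|_{t=0} = \frac{2\pi}{Y_+^2}\coth(2\pi Y_+) - \frac{B}{2}$, and the same value results for $g = \sin(2\pi x)$. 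By $L^2(\T)$-orthogonality of $\cos(2\pi x)$ and $\sin(2\pi x)$ (and of their harmonic extensions) the mixed second variation vanishes, so on the span $\mathcal{V}$ of the two associated vector fields $V_1, V_2$ the second-variation form of $E$ at $U_+$ equals $\bigl(\frac{2\pi}{Y_+^2}\coth(2\pi Y_+) - \frac{B}{2}\bigr)(a^2+b^2)$ for $V = a V_1 + b V_2$. Using $\frac{1}{Y_+^2} = A - BY_+$ from the proof of Lemma \ref{lem:1d}, negativity of this form is exactly the hypothesis $2(\frac AB - Y_+)\,2\pi\coth(2\pi Y_+) < 1$; substituting the closed-form root $Y_+$ of $Y^2(A-BY)=1$ recovers the fully explicit inequality in $A, B$.

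Next I would transfer negativity to $u_k$. Fix $V = aV_1 + bV_2 \in \mathcal{V}$ with flow $\phi_t$, and set $v := -V\cdot\nabla u_k = a v_1^{(k)} + b v_2^{(k)}$, where $v_i^{(k)} := -V_i\cdot\nabla u_k$. Since $u_k \in C^2$ (Lemma \ref{lem:bounded}), $V_i$ is smooth with support in $\T\times[0,A/B)$, and $V_i^y(\cdot,0)=0$, each $v_i^{(k)}$ lies in $\dot W^{1,2}_0$. Writing $u_k\circ\phi_t^{-1} = u_k + tv + t^2 r_t$, the coefficient $r_0$ is smooth, supported in $\T\times[0,A/B)$, and vanishes on $\{y=0\}$ --- the flow fixes that face because $V^x\equiv 0$ and $V^y(\cdot,0)=0$ --- so $r_0$ is an admissible outer test function; since $u_k$ is an outer variation critical point of $E_{\e_k}$ its first variation annihilates $r_0$, and therefore $\partial_{tt}E_{\e_k}[u_k + tv]|_{t=0} = \partial_{tt}E_{\e_k}[u_k\circ\phi_t^{-1}]|_{t=0}$. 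By part (3) the right-hand side converges, as $k\to\infty$, to $\partial_{tt}E[U_+\circ\phi_t^{-1}]|_{t=0} = \bigl(\frac{2\pi}{Y_+^2}\coth(2\pi Y_+) - \frac{B}{2}\bigr)(a^2+b^2)$.

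To finish: by part (1), $v\mapsto \partial_{tt}E_{\e_k}[u_k + tv]|_{t=0}$ is the quadratic form of the symmetric bilinear form $(v,w)\mapsto \int\bigl(2\nabla v\cdot\nabla w + 2\b_{\e_k}'(u_k)\,v w\bigr)$; let $M_k$ be its $2\times 2$ matrix on $\{v_1^{(k)}, v_2^{(k)}\}$. By the previous paragraph and polarization $M_k \to \bigl(\frac{2\pi}{Y_+^2}\coth(2\pi Y_+) - \frac{B}{2}\bigr) I_2$, which is negative definite under the hypothesis, so $M_k$ is negative definite for all large $k$. In particular $v_1^{(k)}, v_2^{(k)}$ are linearly independent (otherwise $M_k$ would be singular), so $W := \mathrm{span}\{v_1^{(k)}, v_2^{(k)}\}$ is a genuine two-dimensional subspace of $\dot W^{1,2}_0$ on which $\partial_{tt}E_{\e_k}[u_k + tv]|_{t=0} < 0$ away from $0$, i.e. $u_k$ has Morse index at least $2$. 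I expect the inner-to-outer conversion to be the only non-routine point: one must check that the quadratic-in-$t$ deformation term $r_0$ is a legitimate outer test function so that criticality removes it, and this is precisely what makes part (3) applicable, the honest outer second variation involving the factor $\b_{\e_k}'(u_k)$ that concentrates near the free boundary and is not stable as $\e_k\to0$.
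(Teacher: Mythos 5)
Your argument for part (4) is correct and is essentially the paper's own: you exhibit an explicit two-dimensional family of admissible vector fields on which the second inner variation of $E$ at $U_+$ is negative, transfer the negativity to $u_k$ via part (3), and convert inner into outer second variation using that $u_k$ is an outer critical point together with $\partial_t(u_k\circ\phi_t^{-1})|_{t=0}=-\nabla u_k\cdot V$. The only cosmetic difference is the choice of modes: you take $g=\cos(2\pi x)$ and $g=\sin(2\pi x)$, while the paper diagonalizes the boundary bilinear form and takes the constant mode $g=1$ (negative already under $B<2(A/3)^{3/2}$, since its eigenvalue is $\frac{2A}{Y_+}-3B<0$) together with $\cos(2\pi x)$; both pairs work, and your value $\frac{2\pi}{Y_+^2}\coth(2\pi Y_+)-\frac{B}{2}$ is negative exactly under the stated hypothesis, as you say. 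Your polarization/matrix argument, including the remark that negative definiteness of $M_k$ forces linear independence of $v_1^{(k)},v_2^{(k)}$, is in fact a more careful rendering of the uniformity and independence points the paper passes over in one sentence.

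The genuine gap is that the statement is the whole lemma, and your proposal proves only part (4) while invoking parts (1)--(3) as black boxes; those are where the bulk of the paper's proof lies. Part (2) requires the second-order expansion in $t$ of both terms of $E[Z\circ\phi_t^{-1}]$ under the flow (expansions of $\psi_t=\phi_t^{-1}$ and $\det D\phi_t$), followed by the simplification at $Z=U_+$ using $V^x=0$, harmonicity of $V^y$ (the integrations by parts producing the boundary terms in $g\,h$ with $h=V^y_y(\cdot,Y_+)$) and the free boundary identity $\frac{1}{Y_+^2}=A-BY_+$, which is exactly what makes the $g\,h$ terms cancel and leaves $\int_{\{0<y<Y_+\}}\frac{2}{Y_+^2}|\nabla V^y|^2-B\int_{\T}g^2$. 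Part (3) requires checking that these first- and second-inner-variation expressions for $E_{\varepsilon_k}$ at $u_k$ are continuous under convergence in $H$ plus $L^1$ convergence of the indicator; this stability is precisely what licenses the limit you take and, as you yourself note, it is the reason one works with inner rather than outer second variations, so it cannot be assumed. Part (1) and the closed-form Vi\`ete expression for $Y_+$ (needed for the ``equivalently'' clause) are likewise asserted rather than proved. So as a proof of part (4) conditional on (1)--(3) your write-up is sound and matches the paper; as a proof of the lemma as stated it is incomplete.
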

\begin{proof}
	Part (1) is the standard second outer variation for semilinear equations obtained similarly to the Euler-Lagrange equations.

	For part (2), we first observe that $|V(x, y)| \leq y \frac{\max |g|}{Y_+}$ by the maximum principle. From this it may be verified that for $t$ small enough $\phi_t$ maps $\T \times [0, \infty)$ into itself, and from this that it is bijective. 
	
	We compute an expansion for $E[Z \circ \phi_t^{-1}]$ for any smooth flow $\phi_t$ which is a diffeomorphism of $\T \times [0, A/B]$ to itself and any $Z \in H$. Set $\psi_t = \phi^{-1}_t$ and $Z_t = Z \circ \psi_t$. Then (subscripts are derivatives, repeated indices are summed over, $t$ subscripts are omitted):
	\[
		\begin{cases}
			\phi^j = x^j + t V^j + \frac{t^2}{2} V^j_k V^k + o(t^2) \\
			\phi^j_i = \d^j_i + t V^j_i + \frac{t^2}{2} (V^j_{ki} V^k + V^j_k V^k_i) + o(t^2) \\
			\psi^j_i = \d^j_i - t V^j_i + \frac{t^2}{2} (-V^j_{ki} V^k + V^j_k V^k_i) + o(t^2) \\
			\det D\phi = 1 + t \dvg V + \frac{t^2}{2} [V_{ki}^i V^k + (\dvg V)^2 ] = 1 + t \dvg V + \frac{t^2}{2} \dvg (V \dvg V).
		\end{cases}
	\]
	The Dirichlet energy can then be approximated by changing variables:
	\begin{align*}
		\int |\nabla Z_t|^2 &= \int Z_i Z_j \psi^i_k \psi^j_k |\det D \phi| \\
		&= \int |\nabla Z|^2 \\
		&+ t \int \left(- 2 V^i_j Z_i Z_j + |\nabla Z|^2 \dvg V \right)\\
		&+ \frac{t^2}{2} \int Z_i Z_j \left( - 4  V^j_i \dvg V + 2 V^i_k V^j_k + 2 (- V^j_{i k} V^k + V^j_k V^k_i)  + \d_i^j \dvg (V \dvg V)\right).
	\end{align*}
	The volume term can also be computed by changing variables (note that by our assumption $\phi$ is the identity for $y > A/B$):
	\begin{align*}
		\int_{\{ Z_t > 0\} } (A - B y)_+ &= \int_{\{ Z > 0\} } (A - B \phi^y)_+ |\det D \phi| \\
		&= \int_{\{ Z > 0\} } (A - B \phi^y)_+ \\
		&+ t \int_{\{ Z > 0\} } \left(- B V^y + (A - B y) \dvg V \right)\\
		&+ \frac{t^2}{2} \int_{\{ Z > 0\} } \left(- 2 B V^y \dvg V - B V^y_i V^i + (A - B y) \dvg (V \dvg V)\right).
	\end{align*}
	This leads to the expressions for derivatives of $E$:
	\[
	\begin{cases}
		\partial_{t} E[Z_t] |_{\{t = 0\}} = \int \left(- 2 V^i_j Z_i Z_j + |\nabla Z|^2 \dvg V  + \chi_{\{ Z > 0\} } \dvg \left( (A - B y) V\right) \right)\\
		\partial_{tt}  E[Z_t] |_{\{t = 0\}} = \int \biggl(Z_i Z_j \left( - 4  V^j_i \dvg V + 2 V^i_k V^j_k + 2 (- V^j_{i k} V^k + V^j_k V^k_i)  + \d_i^j \dvg (V \dvg V) \right)\\
		+ \chi_{\{ Z > 0\} } (- 2 B V^y \dvg V - B V^y_i V^i + (A - B y) \dvg (V \dvg V))\biggr).
	\end{cases}	
	\]
	A similar computation can be performed for $E_\e$; we omit the expressions, but let us observe if $u_\e \rightarrow Z$ in $H$ topology and also $\chi_{\{ u_\e > \e\} } \rightarrow \chi_{Z}$ in $L^1$, then the first and second variations of $E_\e$ at $u_\e$ converge to the first and second variations of $E$ for $Z$, for any vector field $V$ as described here. In particular this proves part (3).
	
	From these expressions it is also clear that both quantities are continuous under $C^2$ convergence of $V$, so if the $V$ of part (2) is approximated by ones compactly supported on $\T \times (0, A/B)$, for which the inner variation critical point property of $U_+$ gives that the first variation is $0$, we will recover that
	\[
		\partial_{t} E[U_+ \circ \phi_t^{-1}]  |_{\{t = 0\}} = 0.
	\]
	We now restrict our attention to that specific $V$, and simplify the formulas using (1) the fact that $V$ is harmonic on $\{U_+ > 0\}$ and (2) the explicit formula $U_+ = (1 - y/Y_+)_+$. Set $h(x) = V^y_y(x, Y_+)$ to be the normal derivative of $V^y$. Firstly,
	\[
		\int |\nabla U_+|^2 \dvg (V \dvg V) = \int_{\T} |\nabla U_+(x, Y_+)|^2 g h \> dx = \frac{1}{Y_+^2} \int_{\T} g h \> dx,
	\]
	using that $V^y(x, 0) = 0$. The other terms have simplified expressions due to $V^x = 0$ and $(U_+)_x = 0$:
	\begin{align*}
		\int (U_+)_i (U_+)_j &\left( - 4  V^j_i \dvg V + 2 V^i_k V^j_k + 2 (- V^j_{i k} V^k + V^j_k V^k_i)\right)\\ &= \frac{1}{Y^2_+} \int_{\{ 0<y< Y_+\} } \left(- 4 (V^y_y)^2 + 2 |\nabla V^y|^2 + 2 (- V^y_{yy} V^y  + (V^y_y)^2) \right)\\
		&= \frac{1}{Y^2_+} \left(\int_{\{ 0<y < Y_+\} }  2 |\nabla V^y|^2 - \int_{\T} 2 g h\> dx\right)
	\end{align*}
	after integrating by parts. The volume terms admit similar simplifications:
	\[
		\int_{\{ 0<y< Y_+\} } \left(- 2 B V^y \dvg V - B V^y_i V^i + (A - B y) \dvg (V \dvg V) \right)= \int_{\{ 0<y< Y_+\} } - 2 B V^y V^y_y + \int_\T g h (A - B Y_+)\> dx
	\]
	after integrating the rightmost term by parts. Then
	\[
		\int_{\{ 0<y< Y_+\} } - 2 B V^y V^y_y = \int_{\{ 0<y< Y_+\} } - B \left((V^y)^2\right)_y = \int_\T - B g^2 \> dx.
	\]
	Putting these together,
	\[
		\partial_{tt}  E[U_+ \circ \phi_t^{-1}] |_{\{t = 0\}} =  \int_{\{ 0<y< Y_+\} } \frac{2}{Y^2_+} |\nabla V^y|^2  + \int_{\T} [(A - B Y_+) - \frac{1}{Y^2_+}] g h - B g^2 \> dx.
	\]
	Recalling the free boundary condition from Lemma \ref{lem:FB}, $\frac{1}{Y_+^2} = A - B Y_+$, so in fact 
	\[
		\partial_{tt} E[U_+ \circ \phi_t^{-1}] |_{\{t = 0\}} =  \int_{\{ 0<y< Y_+\} } \frac{2}{Y^2_+} |\nabla V^y|^2   - B \int_{\T} g^2 \> dx.
	\]

	For part (4), we consider only $g$ which are even in $x$: $g(x) = g(-x)$. Let us use the notation $V^g$ for the vector field defined as above associated with $g$, and the bilinear form
	\[
		B[g_1, g_2] = \int_{\{ 0<y< Y_+\} } \frac{2}{Y^2_+} \nabla (V^{g_1})^y \cdot \nabla (V^{g_2})^y   - B \int_{\T} g_1 g_2 \> dx
	\]
	defined on even functions in $C^\infty(\T) \ss W^{1/2, 2}(\T)$. It is straightforward to explicitly diagonalize $B$ using Fourier series. For integers $m \geq 1$,
	\[
		g = \sqrt{2} \cos(2 \pi m x) \qquad g' = \sqrt{2}\cos(2 \pi m' x)
	\]
	the corresponding $V^g$ can be found by separating variables:
	\[
		V^{g} = (0, \frac{\sinh(2 \pi m y)}{\sinh(2 \pi m Y_+)} \sqrt{2} \cos(2 \pi m x) ).
	\]
	Then
	\[
		\int_{\{ 0<y< Y_+\} } \nabla (V^{g})^y \cdot \nabla (V^{g'})^y =  \int_\T g' V^{g}_y(x, Y_+) \> dx= \int_\T g g' (2 \pi m) \coth(2 \pi m Y_+) \> dx,
	\]
	so
	\[
		B[g, g'] = \d_{m}^{m'} [\frac{4 \pi m}{Y_+^2}\coth(2 \pi m Y_+) - B].
	\]
	For $m = 0$, we instead would have $g = 1$, $V^g = (0, y/Y_+)$, and $B[g, g'] = 0$ for $m' \geq 1$,
	\[
		B[1, 1] = \frac{2}{Y_+^3} - B = \frac{2}{Y_+} A - 3 B,
	\]
	using the free boundary condition $\frac{1}{Y_+^2} = A - B Y_+$. As $Y_+ > \frac{2}{3} \frac{A}{B}$, we have that $B[1, 1] < 0$.
	
	Therefore, in this Fourier basis $B$ is diagonal with eigenvalues $\frac{2}{Y_+^3} - B, \frac{2}{Y_+^3} (2 \pi m Y_+) \coth(2 \pi m Y_+) - B$. The function $t \coth(t)$ is increasing and converges to $1$ at $0$, so this is an increasing sequence. The second smallest eigenvalue is
	\[
		\frac{2}{Y_+^3} (2 \pi Y_+) \coth(2 \pi Y_+) - B = 2 (A - B Y_+ ) 2 \pi \coth(2 \pi Y_+) - B.
	\]
	If this is negative, then there is a two-dimensional space of vector fields $W$ generated by the $V^g$ for these first two eigenfunctions of $B$ such that for every $V \in W$,
	\[
		\partial_{tt}  E_{\e_k}[u_k\circ \phi_t^{-1}] |_{\{t = 0\}} < 0
	\]
	for all $\e_k$ sufficiently small. Using that $u_k$ is a critical point for $E_{\e_k}$ and everything is smooth, we have that
	\[
		\partial_{tt}  E_{\e_k}[u_k\circ \phi_t^{-1}] |_{\{t = 0\}} = \partial_{tt} E_{\e_k}[u_k + t \nabla u_k \cdot V]  |_{\{t = 0\}},
	\]
	giving a two-dimensional subspace of $W^{1,2}_0(\T \times [0, A/B])$ for the outer variation Morse index property.
	
Last, let us compute the largest of the three real roots of the cubic equation
\begin{equation*}
A Y^{2}-B Y^{3}-1=0.
\end{equation*}
Setting
$a:=\frac{A}{-B}, c:=\frac{1}{B}$,
the equation takes the standard form
\begin{equation}\label{eqstandard}
Y^{3}+aY^{2}+c=0.
\end{equation}
In order to remove the 
quadratic term (Tschirnhaus shift), 
put
\begin{equation*}
Y = z-\frac{a}{3},
\end{equation*}
which transforms (\ref{eqstandard}) into the \emph{depressed} cubic
\begin{equation*}
z^{3}+Pz+Q=0\qquad\text{with}\qquad P=-\frac{a^{2}}{3}, Q=\frac{2a^{3}}{27}+c.
\end{equation*}
In terms of the original parameters
\begin{equation*}
P=-\frac{A^{2}}{3B^{2}},\qquad Q=\frac{1}{B}-\frac{2A^{3}}{27B^{3}}.
\end{equation*}
The trigonometric Vi\`ete solutions are then
\begin{equation*}
Z_k = 2 \sqrt{-\frac{P}{3}} \cos\left( \frac{1}{3} \arccos\left( \frac{3Q}{2P} \sqrt{-\frac{3}{P}}\right) - k\frac{2\pi}{3}\right), k=0,1,2.
\end{equation*}
Because the cosine is decreasing on $[0,\pi]$, the \emph{largest} root is obtained for $k=0$:
\begin{equation*}
Z_0 = \frac{2}{3} \frac{A}{B} \cos\left( \frac{1}{3} \arccos\left( 1-\frac{27B^{2}}{2A^{3}}\right) \right).
\end{equation*}
Undoing the Tschirnhaus shift we obtain
\begin{equation}\label{eqY+}
Y_+ = \frac{1}{3} \frac{A}{B} \left(1+2 \cos\left( \frac{1}{3} \arccos\left( 1-\frac{27B^{2}}{2A^{3}}\right) \right)\right).
\end{equation}
\end{proof}
\begin{figure}[htbp]
    \centering
    \includegraphics[width=0.5\textwidth]{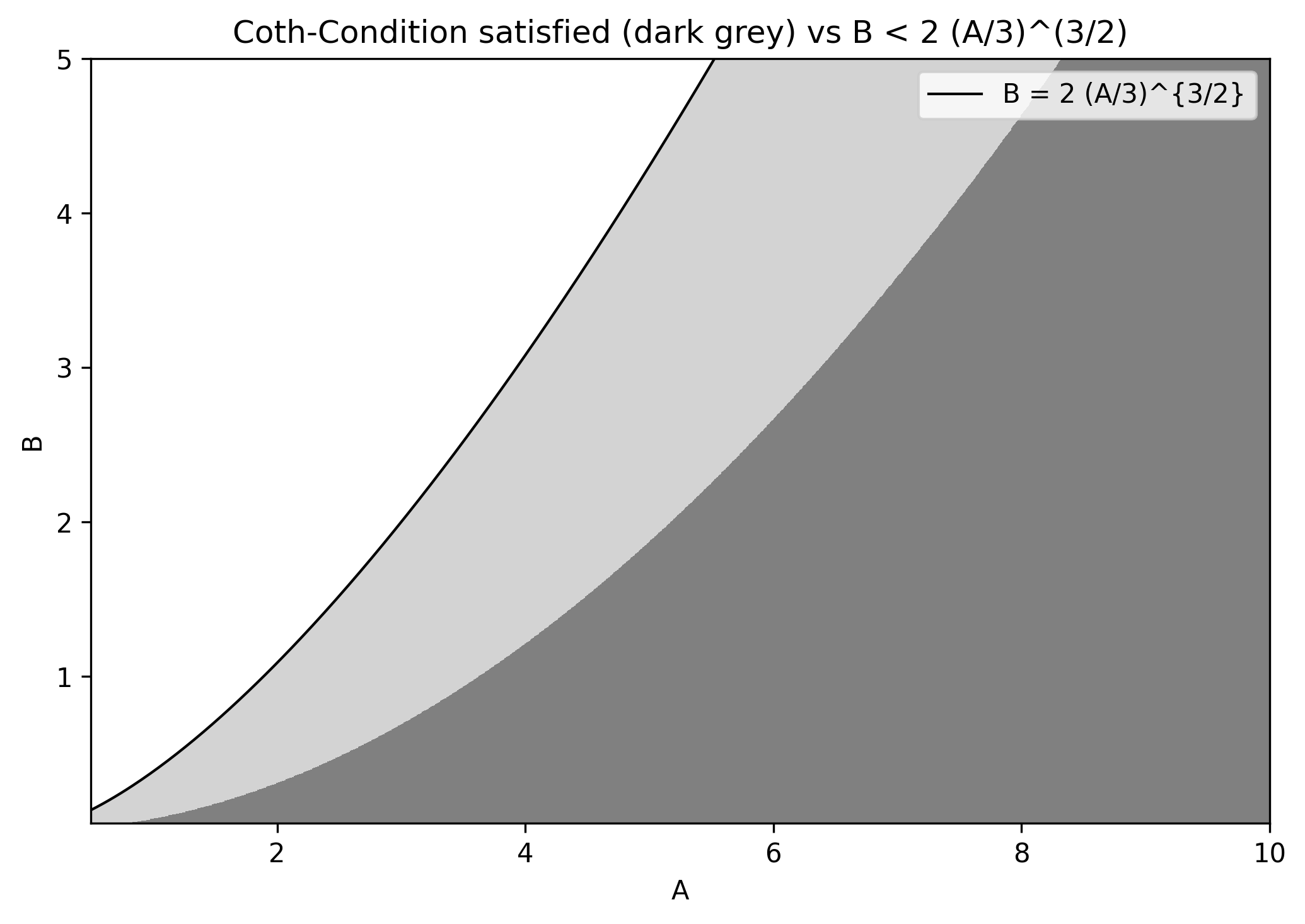} % scale to 50% of text width
    \caption{The admissible region in the $AB$-plane}
    \label{fig:abplot}
\end{figure}
\begin{remark}\label{aboutcondition2}
To better understand the condition imposed in part (4), consider the ratio $A/B$ fixed. The condition $B < 2 (\frac{A}{3})^{3/2}$ can be rewritten
\[
	A > \left(\frac{B}{A}\right)^2 \frac{27}{4}.
\]
The root $Y_+$ moves from $\frac{2}{3}\frac{A}{B}$ to $\frac{A}{B}$ as $A$ increases from $(\frac{B}{A})^2 \frac{27}{4}$ to infinity. The function
\[
	g(t) :=  2 \frac{A}{B} (1 - t) 2 \pi \coth\left(2 \pi t \frac{A}{B}\right)
\]
is decreasing in $t$ and is $0$ at $t = 1$ (corresponding to the limit of large $A$).
So the condition in part (4), which reads $g(Y_+ B/A) < 1$, is satisfied when $A \geq A_* = A_*(A/B) > (\frac{B}{A})^2 \frac{27}{4}$.
In other words, the condition is always satisfies when the wave speed $> \frac{B}{A} \frac{3 \sqrt{3}}{2}$.
\end{remark}

\subsection{The relaxed functional}

The energy landscape for $E_\e$ is potentially more complicated, especially for large $\e$, and is not particularly relevant to our analysis. Instead, we exploit that $E_\e \rightarrow E$ to pass some information to $E_\e$.

\begin{lemma}[Gamma convergence]\label{lem:gamma}
	$E_\e$ $\gamma$-converges to $E$ with respect to the weak topology on $H \ss \dot{W}^{1,2}$. More precisely:
	\begin{enumerate}
		\item If $u_k \rightarrow u \in H$ weakly in $H$ and $\e_k \searrow 0$, then
		\[
			E[u] \leq \liminf_k E_{\e_k}[u_k].
		\]
		\item For any $u \in H$ and $\e_k \searrow 0$, there exist $u_k \in H$ with $u_k \rightarrow u$ weakly in $H$ such that
		\[
			E[u] \geq \limsup_k E_{\e_k}[u_k].
		\]
	\end{enumerate}
\end{lemma}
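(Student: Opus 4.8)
The plan is to verify the two defining inequalities of $\gamma$-convergence directly, using two features of the setup: the weight $(A-By)_+$ is supported on the bounded set $\T \times [0, A/B]$, and $\cB_\e(t) = \cB(t/\e)$ converges pointwise to $\tfrac12 \chi_{\{t>0\}}$ as $\e \to 0$ (since $\cB \ge 0$ is nondecreasing with $\cB(0)=0$ and $\cB \equiv \tfrac12$ on $[1,\infty)$).

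For (1), suppose $u_k \rightharpoonup u$ in $H$ and $\e_k \searrow 0$. First I would pass to a subsequence along which $E_{\e_k}[u_k]$ converges to $\liminf_k E_{\e_k}[u_k]$; if this value is $+\infty$ there is nothing to prove, so assume it is finite, whence $\|\nabla u_k\|_{L^2}$ is bounded. Since $u_k(\cdot,0) = 1$ and the Dirichlet energies are bounded, the $u_k$ are bounded in $W^{1,2}(\T \times [0, A/B])$, so by Rellich compactness, after passing to a further subsequence, $u_k \to u$ in $L^2(\T \times [0, A/B])$ and a.e.\ there. The Dirichlet term is weakly lower semicontinuous, so $\int |\nabla u|^2 \le \liminf_k \int |\nabla u_k|^2$. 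For the potential term: at any point $(x,y)$ with $u(x,y) > 0$ we have $u_k(x,y)/\e_k \to +\infty$, hence $2\cB_{\e_k}(u_k(x,y)) = 1$ for $k$ large; where $u(x,y) \le 0$ we trivially have $2\cB_{\e_k}(u_k(x,y)) \ge 0 = \chi_{\{u>0\}}(x,y)$. Thus $\liminf_k 2\cB_{\e_k}(u_k) \ge \chi_{\{u>0\}}$ a.e., and since the integrands are nonnegative, Fatou's lemma gives
\[
\int \chi_{\{u>0\}} (A-By)_+ \le \liminf_k \int 2\cB_{\e_k}(u_k)(A-By)_+.
\]
Adding the two inequalities (using $\liminf(a_k+b_k) \ge \liminf a_k + \liminf b_k$ together with the convergence of $E_{\e_k}[u_k]$ along the chosen subsequence) yields $E[u] \le \liminf_k E_{\e_k}[u_k]$.

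For (2), the recovery sequence is simply the constant one, $u_k \equiv u$, which converges to $u$ weakly in $H$. For every $(x,y)$ we have $2\cB_{\e_k}(u(x,y)) \to \chi_{\{u>0\}}(x,y)$, and $0 \le 2\cB_{\e_k}(u)(A-By)_+ \le (A-By)_+ \in L^1(\T \times [0,\infty))$, so dominated convergence gives $\int 2\cB_{\e_k}(u)(A-By)_+ \to \int \chi_{\{u>0\}}(A-By)_+$, hence $E_{\e_k}[u] \to E[u]$, which is more than enough.

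There is no genuine obstacle here; the only content is in the $\liminf$ direction, and even there the steps — Rellich compactness to upgrade weak $H$-convergence to a.e.\ convergence on the bounded support of the weight, followed by Fatou applied to the pointwise limit of $2\cB_{\e_k}(u_k)$ — are routine. I would add a remark that, in contrast to Modica--Mortola-type approximations, no nontrivial recovery sequence is needed precisely because the approximation replaces $\chi_{\{u>0\}}$ by the pointwise function $2\cB_\e(u)$ rather than introducing a gradient term scaled in $\e$.
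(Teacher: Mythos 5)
Your proposal is correct and takes essentially the same route as the paper: for the liminf inequality, compact embedding to upgrade to a.e.\ convergence, the pointwise bound $\liminf_k 2\cB_{\e_k}(u_k) \geq \chi_{\{u>0\}}$, Fatou's lemma for the weighted volume term, and weak lower semicontinuity of the Dirichlet term. For the limsup inequality the paper also uses the constant sequence $u_k = u$, only it invokes the monotonicity $2\cB_\e(u) \leq \chi_{\{u>0\}}$ (so $E_\e[u] \leq E[u]$ for every $\e$) instead of your dominated convergence argument --- an immaterial difference.
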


\begin{proof}
	The limsup inequality is in fact trivial: we have that for each $u \in H$, $E_\e[u]$ is non-increasing in $\e$ and converges to $E[u]$, so setting $u_k = u$ gives $E[u] \geq E_{\e_k}[u_k]$.

	For the liminf inequality, first observe that from compact embeddings we may assume that $u_k \rightarrow u$ strongly in $L^2_{\text{loc}}$ and almost everywhere. For each sequence of numbers $t_k \rightarrow t$, we have that $\chi_{(0, \infty)}(t) \leq \liminf_k 2 \cB_{\e_k}(t_k)$. Indeed, the inequality is trivial if $t \leq 0$, while if $t > 0$ then for $k$ large enough $t_k > t/2$ and so for $k$ even larger $\cB_{\e_k}(t_k) \geq \cB_{\e_k}(t/2) = 1$. Applying Fatou's lemma,
	\[
		\int \chi_{\{ u > 0\} } (A - B y)_+ \leq \liminf_k \int 2\cB_{\e_k}(u_k) (A - B y)_+.
	\]
	The other term in $E_\e$ is lower semicontinuous under weak converge of $\nabla u_k$, so we obtain
	\[
		E[u] \leq \liminf_k E_{\e_k}[u_k].
	\]
\end{proof}

A standard consequence of $\gamma$-convergence and the local minimality of $U_-$ and $U_\infty$ is the following stability lemma.

\begin{lemma}[Stability property]\label{lem:stable}
	Assume that $B < 2 (\frac{A}{3})^{3/2}$. Then for every $\d < \d_0$, there exists an $\eta = \eta(\d, A, B) > 0$ such that if
	\[
		\|\nabla u - \nabla U_-\|_{L^2} \in [\d, \d_0],
	\]
	then
	\[
		E_\e[u] \geq E_\e[U_-] + \eta.
	\]
	for all $\e < \e_1 = \e_1(\d, A, B)$. The same is true with $U_\infty$ in place of $U_-$.
\end{lemma}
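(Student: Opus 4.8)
The plan is to argue by contradiction using the $\gamma$-convergence of $E_\e$ to $E$ together with the quantitative local minimality of $U_-$ from Lemma \ref{lem:localmin}(2). Fix $\d < \d_0$ and consider the annular set $K_\d := \{ u \in H : \|\nabla u - \nabla U_-\|_{L^2} \in [\d, \d_0]\}$. By Lemma \ref{lem:localmin}(2), for every $u \in K_\d$ we have $E[u] \geq E[U_-]$ with equality only if $u = U_-$; but $U_- \notin K_\d$ since $\|\nabla U_- - \nabla U_-\|_{L^2} = 0 < \d$. Hence $E[u] > E[U_-]$ strictly on $K_\d$. The goal is to upgrade this to the uniform gap $E_\e[u] \geq E_\e[U_-] + \eta$ for all $u \in K_\d$ and all small $\e$.

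First I would establish the infimum gap at the level of $E$ itself: set $\eta_0 := \inf_{u \in K_\d} E[u] - E[U_-] \geq 0$, and show $\eta_0 > 0$. Suppose not; then there is a sequence $u_j \in K_\d$ with $E[u_j] \to E[U_-]$. Since $\|\nabla u_j\|_{L^2}$ is bounded (being within $\d_0$ of $\|\nabla U_-\|_{L^2}$) and the boundary condition $u_j(\cdot,0)=1$ is preserved, after passing to a subsequence $u_j \rightharpoonup u_*$ weakly in $H$, with $u_* \in H$. The constraint $\|\nabla u - \nabla U_-\|_{L^2} \le \d_0$ passes to the weak limit (weak lower semicontinuity of the norm), so $\|\nabla u_* - \nabla U_-\|_{L^2} \le \d_0$; by Lemma \ref{lem:gamma}(1) (applied with the constant sequence, i.e.\ weak lower semicontinuity of $E$, which follows from $E = \liminf E_{\e_k}$ on constant-in-$\e$ sequences, or directly from Fatou plus weak lsc of the Dirichlet term), $E[u_*] \le \liminf_j E[u_j] = E[U_-]$. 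By Lemma \ref{lem:localmin}(2) this forces $u_* = U_-$. But then $\nabla u_j \to \nabla U_-$ in $L^2$: indeed weak convergence plus convergence of the Dirichlet energies (which follows since $E[u_j] \to E[U_-]$, $E[U_-] = \|\nabla U_-\|_{L^2}^2 + \|\chi_{\{U_->0\}}(A-By)_+\|_{L^1}$, and the volume terms converge by Fatou in both directions once $u_j \to U_-$ a.e.) gives norm convergence, hence strong convergence in Hilbert space. This contradicts $\|\nabla u_j - \nabla U_-\|_{L^2} \ge \d > 0$. So $\eta_0 > 0$.

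Next I would transfer the gap to $E_\e$. Here the subtlety is that $E_\e \le E$, so the lower bound $E[u] \ge E[U_-] + \eta_0$ does not immediately give one for $E_\e[u]$ — but we also have $E_\e[U_-] \le E[U_-]$, and more importantly $E_\e[U_-] \to E[U_-]$ as $\e \to 0$ (pointwise convergence, monotone in $\e$). Suppose the conclusion fails: there are $\e_k \searrow 0$ and $u_k \in K_\d$ with $E_{\e_k}[u_k] < E_{\e_k}[U_-] + \frac{\eta_0}{2}$. Since $E_{\e_k}[U_-] \le E[U_-]$, this gives $E_{\e_k}[u_k] < E[U_-] + \frac{\eta_0}{2}$, so the $u_k$ have uniformly bounded Dirichlet energy; extract a weakly convergent subsequence $u_k \rightharpoonup u_*$ with $\|\nabla u_* - \nabla U_-\|_{L^2} \le \d_0$. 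By Lemma \ref{lem:gamma}(1), $E[u_*] \le \liminf_k E_{\e_k}[u_k] \le E[U_-] + \frac{\eta_0}{2}$. On the other hand, if $u_* \in K_\d$ then $E[u_*] \ge E[U_-] + \eta_0$, a contradiction; and if $u_* \notin K_\d$ then $\|\nabla u_* - \nabla U_-\|_{L^2} < \d$, but this norm is weakly lower semicontinuous only, so it could drop in the limit — this is the one gap I must close. To handle it, I note that $K_\d$ is \emph{not} weakly closed, so I should instead run the $\gamma$-convergence argument against the larger set $\overline{K_\d} := \{\|\nabla u - \nabla U_-\|_{L^2} \le \d_0\}$, on which Lemma \ref{lem:localmin}(2) still gives $E[u] \ge E[U_-]$ with equality iff $u = U_-$, and repeat the compactness argument from the previous paragraph: $E[u_*] \le E[U_-] + \frac{\eta_0}{2} < E[U_-] + \eta_0$ forces (by the strict positivity of the gap over $K_\d$, together with the equality case at $u_* = U_-$ handled separately) either $u_* = U_-$ — in which case $\nabla u_k \to \nabla U_-$ strongly as above, contradicting $u_k \in K_\d$ — or $u_* \in \overline{K_\d}\setminus K_\d$, i.e.\ $\|\nabla u_* - \nabla U_-\|_{L^2} < \d$; in this last case I use that along a further subsequence $u_k \to u_*$ strongly in $L^2_{\mathrm{loc}}$ and a.e., and the lower semicontinuity is in fact an equality for the part of the Dirichlet energy that matters, to derive strong convergence $\nabla u_k \to \nabla u_*$, again contradicting $\|\nabla u_k - \nabla U_-\|_{L^2} \ge \d$. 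The identical argument applies verbatim with $U_\infty$ in place of $U_-$, using Lemma \ref{lem:localmin}(2) for $U_\infty$.

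The main obstacle is precisely the failure of weak closedness of the annular region $K_\d$: a weakly convergent sequence inside the annulus can converge to something strictly inside the inner ball, so one cannot directly say "the weak limit lies in $K_\d$, hence $E[u_*] \ge E[U_-]+\eta_0$." The resolution is the standard one — promote weak convergence to strong convergence by showing the Dirichlet energies converge (which is forced by $E_{\e_k}[u_k] \to E[U_-]$, $u_k \to U_-$ a.e., and dominated/monotone convergence of the volume terms), so that the constraint $\|\nabla u_k - \nabla U_-\|_{L^2} \ge \d$ passes to the limit after all and produces the contradiction.
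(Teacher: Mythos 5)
Your overall strategy (contradiction via the $\gamma$-convergence Lemma \ref{lem:gamma} plus the local minimality Lemma \ref{lem:localmin}(2), upgraded to strong convergence to contradict $\|\nabla u_k-\nabla U_-\|_{L^2}\geq\d$) is the same as the paper's, but the execution has a genuine gap exactly at the point you flag. By fixing the candidate gap at $\eta_0/2$ and negating with that \emph{fixed} tolerance, your contradicting sequence only satisfies $E_{\e_k}[u_k]<E_{\e_k}[U_-]+\eta_0/2$, so all you can conclude for the weak limit is $E[u_*]\leq E[U_-]+\eta_0/2$. Nothing then forces $u_*=U_-$, and in your case $\|\nabla u_*-\nabla U_-\|_{L^2}<\d$ the claim that ``the lower semicontinuity is in fact an equality for the part of the Dirichlet energy that matters'' is unjustified: energy convergence $E_{\e_k}[u_k]\to E[u_*]$ is not available (the excess $\eta_0/2$ does not vanish, and both the Dirichlet and volume terms may drop in the limit), so strong convergence of $\nabla u_k$ does not follow. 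One can in fact rescue the fixed-$\eta_0/2$ route, but only with an extra construction (e.g.\ adding a weakly null perturbation supported in $\{u_*>0\}$ to push $u_*$ back into the annulus and comparing energies, using the orthogonal splitting $\|\nabla(u_k-U_-)\|^2=\|\nabla(u_k-u_*)\|^2+\|\nabla(u_*-U_-)\|^2+o(1)$), none of which appears in your proposal.

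The paper avoids all of this by negating the statement properly: since the claim is only that \emph{some} $\eta>0$ works, its failure yields $\e_k\searrow0$ and $u_k$ in the annulus with $E_{\e_k}[u_k]<E_{\e_k}[U_-]+\tfrac1k$. Then Lemma \ref{lem:gamma} and $E_{\e_k}[U_-]\to E[U_-]$ pin down $E[u_*]\leq E[U_-]$, Lemma \ref{lem:localmin}(2) (valid since $\|\nabla u_*-\nabla U_-\|_{L^2}\leq\d_0$ by weak lower semicontinuity) forces $E[u_*]=E[U_-]$ and $u_*=U_-$, and since the total energies converge while each term of $E_{\e}$ is \emph{separately} lower semicontinuous, the Dirichlet terms converge, giving strong convergence and the contradiction with $\|\nabla u_k-\nabla U_-\|_{L^2}\geq\d$. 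Note also that your justification of energy convergence via ``Fatou in both directions''/``dominated or monotone convergence of the volume terms'' is not correct as stated ($\chi_{\{u_j>0\}}$, resp.\ $2\cB_{\e_k}(u_k)$, need not converge to $\chi_{\{U_->0\}}$); the correct mechanism is precisely the separate lower semicontinuity of both terms combined with convergence of their sum, which also makes your preliminary step ($\eta_0>0$ for $E$ itself) sound but, in the paper's one-shot argument, unnecessary.
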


Note that we do not claim that $U_-, U_\infty$ are local minimizers of $E_\e$; indeed, this is clearly false for $U_-$.

\begin{proof}
	We argue by contradiction: if not, then there is a $\d > 0$ and sequences $u_k \in H$, $\e_k \searrow 0$ as $k\to\infty$, such that
	\[
		\|\nabla u_k - \nabla U_-\|_{L^2} \in [\d, \d_0]
	\]
	but
	\[
		E_{\e_k}[u_k] < E_{\e_k}[U_-] + \frac{1}{k} \text{ for all }k.
	\]
	We extract a subsequence $u_k \rightarrow u \in H, k\to\infty$ in the weak topology of $H$. By Lemma \ref{lem:gamma},
	\[
		E[u] \leq \liminf_{k\to\infty} E_{\e_k}[u_k] \leq \liminf_{k\to\infty} E_{\e_k}[U_-] = E[U_-].
	\]
	We also have that
	\[
		\|\nabla u - \nabla U_-\|_{L^2} \leq \liminf_{k\to\infty} \|\nabla u_k - \nabla U_-\|_{L^2} \leq \d_0.
	\]
	From Lemma \ref{lem:localmin}, then,
	\[
		E[u] = E[U_-],
	\]
	and so $u = U_-$. Moreover, as each term in $E_\e$ is separately lower semicontinuous, this implies that
	\[
		\int |\nabla u|^2 = \lim_{k\to\infty} \int |\nabla u_k|^2 ,\qquad \int \chi_{\{ u > 0\} } (A - B y)_+ =  \lim_{k\to\infty} \int 2\cB_{\e_k}(u_k) (A - B y)_+,
	\]
	so $\nabla u_k \rightarrow \nabla u = \nabla U_-$ strongly as $k\to\infty$. This is a contradiction to
	\[
	\|\nabla u_k - \nabla U_-\|_{L^2} \geq \d.
	\]
\end{proof}

\section{Mountain pass solutions}\label{s:mountainpass}

In this section, we produce critical points of $E_\e$. This amounts to applying standard mountain pass results, for which the key assumption needed is the Palais-Smale condition below.

\begin{lemma}[Palais-Smale condition] \label{lem:PS}
	$E_\e$ satisfies the Palais-Smale condition: if $u_k \in H$ is a sequence with $\sup_k E_\e[u_k] < \infty$ and
	\[
		\sup \{ \left|\int \left(\nabla u_k \cdot \nabla v + \b_\e(u_k) (A - B y)_+ v\right)\right| : v(x, 0) = 0, \|v\|_{\dot{W^{1, 2}}} \leq 1\} \rightarrow 0 \text{ as }k\to\infty,
	\]
	then $u_k$ has a subsequence converging strongly in $H$.
\end{lemma}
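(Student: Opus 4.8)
The plan is to establish the Palais-Smale condition by the usual two-step strategy for semilinear energies: first show that a Palais-Smale sequence is bounded in $H$, then upgrade weak convergence of a subsequence to strong convergence using the structure of the equation. The key simplifying feature here is that $\cB_\e$ has bounded derivative $\b_\e$, so the nonlinear term $\b_\e(u)(A-By)_+$ is bounded in $L^\infty$ (and supported in $y \le A/B$), which makes the compactness arguments essentially linear in character.

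\textbf{Step 1: Boundedness.} First I would show $\sup_k \|\nabla u_k\|_{L^2} < \infty$. Since $E_\e[u_k] = \int |\nabla u_k|^2 + 2\cB_{\e}(u_k)(A-By)_+ \le C$ and the second term is nonnegative, we immediately get $\|\nabla u_k\|_{L^2}^2 \le C$. Because $u_k \in H$ (i.e. $u_k(x,0) = 1$ and $\nabla u_k \in L^2$), the seminorm bound together with the boundary condition controls the full $\dot W^{1,2}$ norm: writing $u_k = 1 + w_k$ with $w_k \in W^{1,2}_0$ on each truncated cylinder and using the Poincaré-type inequality coming from the Dirichlet condition at $y=0$ on compact pieces plus the decay argument from Lemma \ref{lem:bounded} at infinity, one sees $u_k$ is bounded in $H$. (In fact, since the Dirichlet term alone is bounded, boundedness in $H$ is automatic from the definition of the $\dot W^{1,2}$ seminorm once one notes $u_k - 1 \to 0$ appropriately; this step is routine.)

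\textbf{Step 2: Weak limit and strong convergence.} Extract a subsequence with $u_k \rightharpoonup u$ weakly in $H$, hence $u_k \to u$ strongly in $L^2_{\mathrm{loc}}$ and a.e. (compact embedding on the region $\{y \le A/B\}$ where the nonlinearity lives, combined with the fact that on $\{y > A/B\}$ both $u_k$ and $u$ are controlled via the harmonic decay estimate of Lemma \ref{lem:bounded}). Now test the near-criticality condition with $v = u_k - u$: this is admissible since $(u_k - u)(x,0) = 0$ and $\|u_k - u\|_{\dot W^{1,2}}$ is bounded, so after normalizing,
\[
	\int \left(\nabla u_k \cdot \nabla(u_k - u) + \b_\e(u_k)(A-By)_+(u_k-u)\right) \to 0.
\]
The second integral is supported in $\{y \le A/B\}$, where $\b_\e(u_k)$ is uniformly bounded and $u_k - u \to 0$ in $L^2$, so it vanishes in the limit; one does need to check there is no loss of mass at $y = \infty$ in the first pairing, but since $\nabla u \in L^2$ and $\nabla u_k \rightharpoonup \nabla u$, the standard tail estimate (cutting off at large $y$ as in Lemma \ref{lem:bounded}) handles this. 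Meanwhile $\int \nabla u \cdot \nabla(u_k - u) \to 0$ by weak convergence. Subtracting gives $\int |\nabla(u_k - u)|^2 \to 0$, i.e. $\nabla u_k \to \nabla u$ strongly in $L^2$, hence $u_k \to u$ strongly in $H$.

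\textbf{The main obstacle} is the noncompactness of the domain $\T \times [0,\infty)$: the compact Sobolev embedding fails on the unbounded cylinder, so one cannot directly conclude $u_k \to u$ in $L^2$ globally, and one must separately control the tails $\{y > R\}$. The resolution is exactly the mechanism already exploited in Lemma \ref{lem:bounded}: for $y > A/B$ the relevant functions are harmonic (or the energy integrand is purely Dirichlet), and $\nabla u_k \in L^2$ uniformly forces the averages $\int_\T u_k\,dx$ to stabilize and the oscillation to decay, so the tails carry uniformly small energy and the test-function pairing over $\{y > R\}$ is negligible for $R$ large, uniformly in $k$. Once the problem is localized to the compact slab $\T \times [0, A/B + 1]$, everything proceeds as in the classical bounded-domain semilinear case. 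I would present the tail estimate as the one genuinely nontrivial point and treat the rest as standard.
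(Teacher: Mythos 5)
Your proposal is correct and follows essentially the same route as the paper: extract a weak limit, test the near-criticality condition with the normalized difference $u_k-u$, kill the nonlinear term using that $\b_\e$ is bounded and the weight $(A-By)_+$ is supported in the compact slab $\{y\le A/B\}$ where $u_k\to u$ strongly in $L^2$, and use weak convergence of $\nabla u_k$ against the fixed $L^2$ function $\nabla u$ to conclude $\|\nabla(u_k-u)\|_{L^2}\to 0$. The only deviation is your extra worry about a tail estimate for the pairing $\int \nabla u_k\cdot\nabla(u_k-u)$, which is unnecessary (and not used in the paper), since $\nabla u\in L^2$ globally already makes $\int \nabla u\cdot\nabla(u_k-u)\to 0$ without any cutoff.
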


\begin{proof}
	As $E_\e [u_k]$ is bounded, we may find a subsequence which converges to $u \in H$ weakly in $\dot{W}^{1,2}$ topology, as well as in $L^2_{\text{loc}}$ and almost everywhere. Using $v = \frac{u_k - u}{\|\nabla u_k - \nabla u\|_{L^2}}$ as a test function for the second assumption,
	\[
		\left|\int \left(\nabla u_k \cdot \nabla (u_k - u) + \b_\e(u_k)(A - B y)_+ (u_k - u)\right)\right| \to 0 \text{ as }k\to\infty.
	\]
	Using the weak convergence,
	\[
		\int \nabla u \cdot \nabla (u_k - u) \rightarrow 0 \text{ as }k\to\infty.
	\]
	The function $\b_\e(u_k)(A - B y)_+ (u_k - u)$ is supported on $\{y < A/B\}$, and is uniformly integrable (it is uniformly bounded in $L^2$, recalling that $\b_\e$ is bounded), so its integral goes to $0$. We obtain that
	\[
		\int |\nabla (u_k - u)|^2 \rightarrow 0 \text{ as }k\to\infty,
	\]
	so $u_k \rightarrow u$ strongly as $k\to\infty$.
\end{proof}

Let $P = \{p \in C([0, 1]; H) : p(0) = U_-, p(1) = U_\infty\}$ and
\[
	G_\e = \inf_{p \in P} \sup_{t \in [0, 1]} E_\e [p(t)] \geq \max \{E_\e [U_-], E_\e [U_\infty]\}.
\]

\begin{lemma}[Mountain pass] \label{lem:mountainpass}
	Assume that $B < 2 (\frac{A}{3})^{3/2}$ and $\e < \e_2 = \e_2(A, B)$. Then there exists an outer variation critical point $u$ of $E_\e$ with $E_\e[u] = G_\e$, and
	\[
		G_\e > \max \{E_\e [U_-], E_\e [U_\infty]\} + \eta_2(A, B).
	\]
	Moreover, $u$ has Morse index at most $1$: given any two linearly independent functions $v_1, v_2 \in W^{1, 2}_0(\T \times [0, T])$, there is a linear combination $v = a_1 v_1 + a_2 v_2$ of them
	such that
	\[
		\p_{tt} E_\e[u + tv] |_{\{ t = 0\} } \geq 0.
	\]
\end{lemma}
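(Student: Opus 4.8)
The plan is to apply a standard mountain pass theorem in the spirit of \cite{AR73} to the $C^2$ functional $E_\e$ on the affine Hilbert space $H$, using the Palais-Smale condition from Lemma \ref{lem:PS} and a valid mountain pass geometry supplied by the energy landscape analysis. The geometry is the main thing to set up: by Lemma \ref{lem:stable}, for a fixed $\d < \d_0$ the sphere-like sets $\{\|\nabla u - \nabla U_\pm\|_{L^2} = \d\}$ separate $U_-$ from $U_\infty$ in $H$ (since $H$ is connected and these two points lie in different components of the complement of the union of the two ``rings'' $\|\nabla u - \nabla U_-\|_{L^2} \in [\d,\d_0]$ and $\|\nabla u - \nabla U_\infty\|_{L^2}\in[\d,\d_0]$, once $\d_0$ and the distance between $\nabla U_-$ and $\nabla U_\infty$ are accounted for), and along any path $p\in P$ from $U_-$ to $U_\infty$ there is some $t$ with $\|\nabla p(t) - \nabla U_-\|_{L^2} = \d$ or $\|\nabla p(t) - \nabla U_\infty\|_{L^2} = \d$ lying in the respective ring; at that $t$, Lemma \ref{lem:stable} gives $E_\e[p(t)] \geq E_\e[U_\pm] + \eta$ for $\e < \e_1(\d,A,B)$. (One must check $U_-$ and $U_\infty$ are genuinely separated: if $\|\nabla U_- - \nabla U_\infty\|_{L^2} \leq 2\d_0$ one shrinks $\d_0$ first — this only strengthens Lemma \ref{lem:localmin}.) This yields, for $\e < \e_2 := \min\{\e_1, \dots\}$ and with $\eta_2 := \eta$,
\[
	G_\e \geq \max\{E_\e[U_-], E_\e[U_\infty]\} + \eta_2(A,B),
\]
and in particular the strict inequality claimed. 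The Palais-Smale condition (Lemma \ref{lem:PS}) plus this strict mountain pass geometry are exactly the hypotheses of \cite{AR73} adapted to the constrained-boundary-value setting (the boundary value $u(\cdot,0)=1$ being fixed means we work in the affine space modeled on $\dot W^{1,2}_0$, which is harmless); hence there is an outer variation critical point $u$ of $E_\e$ with $E_\e[u] = G_\e$.

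For the Morse index bound, I would use the refined minimax principle that produces critical points of bounded augmented Morse index at a mountain pass level — e.g. the results of \cite{AR73}-type deformation arguments combined with the linking/index theory (as in the work of Hofer, or Ambrosetti--Rabinowitz's own index estimates), which guarantee that at the minimax value $G_\e$ there exists a critical point $u$ whose Morse index (dimension of a maximal subspace on which $\partial_{tt}E_\e[u+tv]|_{t=0} < 0$) is at most $1$. Since $E_\e$ is $C^2$ and by Lemma \ref{lem:variationformulas}(1) the second variation is the quadratic form $v \mapsto \int(2|\nabla v|^2 + 2\b_\e'(u)v^2)$, which is a bounded perturbation of the Dirichlet form (the potential term $\b_\e'(u)$ is bounded and compactly supported in $\{y < A/B\}$), this quadratic form is Fredholm and its negative cone is finite-dimensional, so ``Morse index $\leq 1$'' makes sense and the minimax characterization applies. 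Concretely, given any two linearly independent $v_1, v_2 \in W^{1,2}_0(\T\times[0,T])$, if the index is $\leq 1$ then the form cannot be negative definite on $\mathrm{span}\{v_1,v_2\}$, so some nonzero combination $v = a_1 v_1 + a_2 v_2$ has $\partial_{tt}E_\e[u+tv]|_{t=0}\geq 0$.

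The main obstacle I anticipate is not any single estimate but assembling a clean, citable form of the bounded-Morse-index mountain pass theorem. Classical \cite{AR73} gives existence at the minimax level under Palais-Smale but does not, as stated, control the Morse index; the index bound requires either the Marino--Prodi perturbation technique (perturb $E_\e$ to a Morse function, extract a critical point of index $\leq 1$ for the perturbation, pass to the limit using Palais-Smale) or a direct deformation argument showing that if every critical point at level $G_\e$ had index $\geq 2$ one could push the minimax paths below $G_\e$. Either route is standard in the semilinear literature but needs $E_\e$ to satisfy enough regularity and compactness: $E_\e \in C^2$, Palais-Smale (Lemma \ref{lem:PS}), and the second-variation form being Fredholm — all of which are available here. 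A secondary technical point is verifying the deformation lemma respects the fixed boundary condition $u(\cdot,0)=1$; this is automatic because the gradient flow of $E_\e$ in $H$ (with respect to the $\dot W^{1,2}_0$ inner product) fixes the trace on $\T\times\{0\}$. I would organize the proof as: (i) shrink $\d_0$ if needed and establish separation of $U_\pm$; (ii) deduce the mountain pass geometry and the energy gap via Lemma \ref{lem:stable}; (iii) invoke the bounded-Morse-index minimax theorem using Lemma \ref{lem:PS} and Lemma \ref{lem:variationformulas}(1); (iv) translate ``index $\leq 1$'' into the stated two-dimensional-subspace assertion.
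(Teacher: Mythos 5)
Your proposal is correct and follows essentially the same route as the paper: translate to the linear space $\{v : v(x,0)=0\}$, use Lemma \ref{lem:stable} for the mountain-pass geometry and Lemma \ref{lem:PS} for Palais--Smale, apply the classical mountain pass theorem to get a critical point at level $G_\e$, and invoke a Morse-index-at-most-one refinement of the minimax principle (the paper cites \cite{FG92}; your Hofer/Fang--Ghoussoub/Marino--Prodi routes are the same circle of results) together with the smoothness of $E_\e$ from Lemma \ref{lem:variationformulas}(1). The only step to phrase carefully is the energy gap: a path from $U_-$ to $U_\infty$ crosses the $\d$-spheres around \emph{both} equilibria (or, as the paper does, one centers the argument at whichever of $U_-$, $U_\infty$ has the larger $E_\e$-value), so that Lemma \ref{lem:stable} yields $G_\e \geq \max\{E_\e[U_-],E_\e[U_\infty]\}+\eta$ rather than only the bound with the minimum, as your ``or'' formulation would literally give.
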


\begin{proof}
	We apply the mountain pass theorem (see \cite[Theorem (2.)6.1]{S08}  or \cite{AR73}). To be precise, if $E_\e [U_-] \geq E_\e [U_\infty]$, set $V = \{ u \in \dot{W}^{1,2}(\T \times [0, \infty) : u(x, 0) = 0\}$ and $E_*[v] = E_\e[U_- + v] - E[U_-] : V \rightarrow \R$. By Lemma \ref{lem:PS}, $E_*$ satisfies the Palais-Smale condition. Fix a $\d < \d_0$ so that $\|U_- - U_\infty\|_V > \d$ and apply Lemma \ref{lem:stable} to get that, so long as $\e < \e_1$,	\[
		\|v\|_V = \d \qquad \implies \qquad E_*[v] \geq \eta > 0.
	\]
	We also have that
	\[
		E_*[U_\infty - U_-] \leq 0 < \eta.
	\]
	Then the mountain pass theorem directly applies to give a critical point $v_*$ of $E_*$ with $E_*[v_*] = G_\e - E[U_-] \geq \eta$; then $u_* = U_- + v_*$ is a critical point of $E_\e$ as desired. If $E_\e [U_-] < E_\e [U_\infty]$, swap the roles of $U_-$ and $U_\infty$.

	We note that $v \mapsto E_*[v]$ is $C^{k}$ for any $k$, as can be verified directly from the definition using that $\cB_\e$ is smooth. Then the result of \cite{FG92} shows it is possible to take $u_*$ to be of Morse index at most $1$.
\end{proof}

\section{Lipschitz bounds}\label{s:lip}

To pass to the limit in $\e$, some uniform estimates are needed. Bernoulli free boundary problems in general admit an a priori Bernstein-type Lipschitz estimate for critical points. Heuristically, the idea is that $|\nabla u|^2$ is subharmonic, controlled on the free boundary by the free boundary condition itself, and controlled on $\{y = 0\}$ by elementary barrier arguments (see \cite{ac}, for example). In practice we need this for the semilinear approximating problems, so sketch the argument below.

\begin{lemma}[Uniform Lipschitz estimate]\label{lem:lip}
	Fix $\e \in  (0,\e_3(A, B))$. There is a constant $M = M(A, B, \beta)$ (independent of $\e$) such that if $u_\e$ is an outer variation critical point of $E_\e$, then
	\[
		\sup_{\T \times [0, \infty)} |\nabla u_\e| \leq M.
	\]
\end{lemma}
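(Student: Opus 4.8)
The plan is to establish the uniform Lipschitz bound via a Bernstein-type argument applied to $w_\e := |\nabla u_\e|^2$, using interior sup bounds at three scales of region: the flat bottom $\{y=0\}$, the bulk region $\{0 < y < A/B\}$ where the forcing $\b_\e(u_\e)(A-By)_+$ is active, and the exterior region $\{y \geq A/B\}$ where $u_\e$ is harmonic. The key point throughout is that the constant must not depend on $\e$, which forces us to work with the free-boundary-like behavior of the semilinear problem rather than with naive elliptic estimates (whose constants blow up as $\e \to 0$).

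First I would treat the bottom. Since $u_\e(x,0) = 1$ and $0 < u_\e \leq 1$ by Lemma \ref{lem:bounded}, and $\Delta u_\e = \b_\e(u_\e)(A-By)_+$ is bounded (but with an $\e$-dependent bound, so we cannot use this crudely), the tangential derivative $(u_\e)_x(x,0) = 0$ vanishes identically, and the normal derivative $(u_\e)_y(x,0)$ can be controlled by a barrier: comparing $u_\e$ from below with a suitable harmonic or subsolution barrier on a thin strip $\T \times [0,h]$ (using $u_\e \geq 0$ and $u_\e \leq 1$), one gets $|(u_\e)_y(x,0)| \leq C(A,B)$ independent of $\e$ — this is the standard Alt--Caffarelli bottom estimate and I would cite \cite{ac} for the barrier construction. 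Combined with $(u_\e)_x(x,0)=0$, this bounds $|\nabla u_\e|$ on $\{y=0\}$.

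Next, the interior estimate. On $\{y \geq A/B\}$, $u_\e$ is harmonic with $\nabla u_\e \in L^2$, and the argument already given in the proof of Lemma \ref{lem:bounded} (constancy of $\int_\T u_\e\,dx$, Poincar\'e, mean value property) shows $|u_\e| \leq C$ and then interior gradient estimates for harmonic functions give $|\nabla u_\e| \leq C(A,B)$ there, with an $\e$-independent constant since no forcing is present. The genuinely delicate region is $\{0 < y < A/B\}$. Here I would use that $w_\e = |\nabla u_\e|^2$ satisfies $\Delta w_\e = 2|D^2 u_\e|^2 + 2\nabla u_\e \cdot \nabla(\Delta u_\e) \geq 2\nabla u_\e \cdot \nabla(\b_\e(u_\e)(A-By)_+)$. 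The term $\nabla(\b_\e(u_\e)) = \b_\e'(u_\e)\nabla u_\e$ produces $2\b_\e'(u_\e)(A-By)_+ w_\e$, which has a bad sign-unfavorable size $\sim \e^{-1}$; but the standard trick is that $\b_\e'(u_\e) \geq 0$ so this term is actually of favorable sign for a \emph{subsolution} argument — we get $\Delta w_\e \geq -C w_\e^{1/2}$ for an $\e$-independent $C$ (the remaining term $-2B (u_\e)_y \leq C|\nabla u_\e|$). One then runs a De Giorgi / local maximum principle iteration, or more cleanly a Bernstein argument on $\Phi := w_\e + \lambda u_\e^2$ or on $\log$-type quantities, to bound $\sup w_\e$ on $\T \times [0, A/B]$ in terms of $\sup w_\e$ on the parabolic-type boundary pieces $\{y=0\}$ and $\{y = A/B\}$, both already controlled above. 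Because the bad term has been absorbed with the correct sign, every constant in this iteration depends only on $A, B, \beta$.

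The main obstacle is precisely handling the $\e^{-1}$-scale term $\b_\e'(u_\e)(A-By)_+$ in the Bernstein computation: one must set up the auxiliary function so that this term either cancels or appears with a sign allowing it to be dropped, and simultaneously keep track that $\b_\e(u_\e)$ itself (bounded by $1$, $\e$-uniformly) enters only through $\e$-independent quantities. A convenient route is to test the equation for $w_\e$ against $\eta^2 (w_\e - k)_+$ with cutoffs $\eta$ and to note $\int \eta^2 (w_\e-k)_+ \b_\e'(u_\e)(A-By)_+ w_\e \geq 0$ can simply be discarded from the right-hand side after an integration by parts that moves the derivative off $\b_\e(u_\e)$; this is where the monotonicity $\b_\e' \geq 0$ is essential. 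Once that sign bookkeeping is done, the rest is the routine Alt--Caffarelli/Bernstein machinery, and I would present it in the condensed form the lemma's preamble promises (``sketch the argument''), referring to \cite{ac} for the barrier and classical subsolution estimates for the iteration.
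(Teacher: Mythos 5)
There is a genuine gap in the heart of your argument, namely the treatment of the transition layer where $\b_\e(u_\e)$ is active. You claim that $\b_\e'(u_\e)\geq 0$, so that the term $2\b_\e'(u_\e)(A-By)_+|\nabla u_\e|^2$ in the Bochner/Bernstein identity can be kept or discarded with a favorable sign. This is false: $\b_\e=\cB_\e'$ is (up to a constant factor) a bump-type function supported in $[0,\e]$, since $\cB$ is nondecreasing with $\cB\equiv 1/2$ for $t\geq 1$; hence $\cB'$ must decrease back to $0$, so $\b_\e'=\cB_\e''$ necessarily takes negative values of size $\sim\e^{-2}$. The same problem recurs in your Caccioppoli-type route, where you discard $\int\eta^2(w_\e-k)_+\b_\e'(u_\e)(A-By)_+w_\e$ as nonnegative. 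In addition, the other term you call harmless is actually $-2B\,\b_\e(u_\e)\,(u_\e)_y$, not $-2B\,(u_\e)_y$: it carries the factor $\b_\e(u_\e)\sim\e^{-1}$, so your claimed $\e$-uniform inequality $\Delta w_\e\geq -Cw_\e^{1/2}$ does not follow. In short, running Bernstein/De Giorgi directly across the set $\{u_\e\lesssim\e\}$ is exactly where $\e$-uniformity breaks down. The paper's proof avoids this: on $\{u_\e\leq\e\}$ it rescales, $v:=u_\e((x_0,y_0)+\e\,\cdot)/\e$, so that $\Delta v=\cB'(v)(A-B(y_0+\e y))_+$ has an $\e$-independent bound, and Harnack plus interior $C^{1,\alpha}$ estimates give $|\nabla u_\e|\leq C(A,B,\beta)$ there; then on $\{u_\e>\e\}$ the function $u_\e$ is harmonic (since $\cB_\e$ is constant for $t\geq\e$), so $|\nabla u_\e|^2$ is genuinely subharmonic and the maximum principle transfers the boundary bound, with the far field handled by the Fourier/exponential decay you also use.

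Your bottom estimate is also not justified as stated. Since $\Delta u_\e\geq 0$, $u_\e$ is subharmonic, so comparison gives only upper barriers; there is no obvious lower barrier preventing $u_\e$ from dropping quickly from its boundary value $1$, unless you already know that $\{u_\e\leq\e\}$ stays a uniform distance from $\{y=0\}$ — which, before the Lipschitz bound is established, is circular. The paper sidesteps this by a Hopf-lemma argument: on $\{y=0\}$ one has $u_x=0$ and (using harmonicity near the bottom) $\partial_y|\nabla u_\e|^2=-2(u_\e)_y(u_\e)_{xx}=0$, so if the maximum of $|\nabla u_\e|^2$ were attained at the bottom, Hopf's principle applied to $m-|\nabla u_\e|^2$ would force $\partial_y(m-|\nabla u_\e|^2)>0$ there, a contradiction; hence the bottom never carries the maximum and needs no separate quantitative estimate. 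To repair your write-up you would need to replace the sign claims on $\b_\e'$ by the $\e$-rescaling argument on $\{u_\e\leq\e\}$ (or an equivalent device) and replace the lower-barrier step at $\{y=0\}$ by an argument compatible with subharmonicity.
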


\begin{proof}
We will omit the subscript of $u_\e$ in this proof, setting $u := u_\e$.

First, note that the maximum of $|\nabla u|$ cannot be attained at the boundary $y = 0$.
Indeed, since $u\in C^2( \T \times [0, \infty))$, $u$ is harmonic in an open neighborhood of
$y = 0$ relative to $\T \times [0, \infty)$, and
$$ \partial_y |\nabla u|^2 = 2 u_x u_{xy} + 2 u_y u_{yy} = 0 - 2 u_y u_{xx} = 0 \text{ on } y = 0.$$
However, supposing towards a contradiction that $m := \max_{\T \times [0, \infty)} |\nabla u|^2 = |\nabla u(x_0,0|^2$,
Hopf's principle would imply that the non-negative superharmonic function 
$m-|\nabla u|^2$ 
satisfies $\partial_y  (m-|\nabla u|^2) >0$ on $y = 0$, a contradiction.

Next, we check that $\max_{x \in \T} |\nabla u(x, y)|\rightarrow 0$ as $y \rightarrow \infty$. Indeed, for $y > A/B$ we have that $u$ is harmonic and bounded, and so can be represented by separation of variables:
	\[
		u(x, y) = a_0 + \sum_{k = 1}^\infty \left(a_k \sin(2 \pi k x) + b_k \cos(2 \pi k x)\right) e^{- 2 \pi k (y - A/B)}
	\]
	for $y > A/B$, with $a_0^2 + \sum_k a_k^2 + b_k^2 = \int_\T u^2(x, A/B) \leq 1$. It is straightforward to then verify that all derivatives decay exponentially.
	
Finally, we will prove a quantitative estimate of $|\nabla u|$ on $\{u \leq  \e \}$.
For the energy 
\[
\int  \left(|\nabla u|^2 + 2\cB_\e(u)\right)
\] 
this has been done in \cite{bcn},
and in a parabolic two-phase setting estimates for variable coefficients has been
proved in \cite{ck98}. For the sake of completeness we give a short proof here.

Let $(x_0,y_0)\in \{u \leq \e \}$ and let $v := u((x_0,y_0)+\e \cdot)/\e$. Then $|\Delta v|\leq C(A,B,\beta) (A-B y_0)_+
\leq C(A,B,\beta) A$ in $B_1(0)$, 
so that the Harnack inequality together with $C^{1,\alpha}$ estimates imply that $|\nabla u(x_0,y_0)|=|\nabla v(0)| \leq C(A,B,\beta)$.

Finally, we are in a position to conclude: Let $D = \{(x, y) : u(x, y) > \e, y < T\}$. As $|\nabla u|^2$ is subharmonic on $D$, and for large enough $T$ we have shown that $|\nabla u|^2 \leq C(A,B,\beta)$ on $\partial D$, the maximum principle gives that $|\nabla u|^2 \leq C(A,B,\beta)$ on $D$.
\end{proof}

In fact, a much stronger estimate is available, giving a sharp bound on the gradient. Estimates like this tend to hold for entire solutions, or asymptotically near free boundary points (like in \cite{ac}), but here the specific constant boundary condition means it is true globally.

\begin{lemma}[Strong Bernstein inequality]\label{lem:bern}
	Every weak $W^{1,2}$-limit $u$ of outer variation critical points $u_\e$ of $E_\e$ as $\e \to 0$
	satisfies 
	\begin{equation}
|\nabla u|^2 \leq (A-By)_+ \text{ a.e. on } \T \times [0, \infty).
\end{equation}
\end{lemma}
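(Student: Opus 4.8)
The plan is to establish the pointwise bound $|\nabla u|^2 \leq (A-By)_+$ for a weak limit $u$ of outer variation critical points $u_\e$ by combining the uniform Lipschitz bound of Lemma~\ref{lem:lip} (which gives enough compactness to upgrade weak $W^{1,2}$ convergence to locally uniform convergence of $u_\e$ and $\nabla u_\e$ away from the free boundary, via elliptic estimates on the harmonic pieces) with a barrier/maximum-principle argument applied to the quantity $w_\e := |\nabla u_\e|^2 - (A - By)_+$. The key structural facts are: (i) on $\{u_\e > \e\}$ (where $u_\e$ is genuinely semilinear but close to harmonic) $|\nabla u_\e|^2$ is ``almost subharmonic'' — more precisely $\Delta |\nabla u_\e|^2 = 2|D^2 u_\e|^2 + 2\nabla u_\e \cdot \nabla(\Delta u_\e) \geq 2\nabla u_\e \cdot \nabla(\b_\e(u_\e)(A-By)_+)$, with the right-hand side small when $\e$ is small and $u_\e$ is away from the free boundary since $\b_\e(u_\e) = 0$ there; (ii) $(A-By)_+$ is harmonic (in fact affine) on $\{y < A/B\}$ and identically $0$ on $\{y > A/B\}$; and (iii) by Lemma~\ref{lem:lip}, $|\nabla u_\e| \to 0$ as $y \to \infty$ uniformly, and $|\nabla u_\e| \leq C(A,B,\beta)$ on $\{u_\e \leq \e\}$.

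The steps I would carry out, in order, are as follows. First, pass to a subsequence so that $u_\e \to u$ weakly in $W^{1,2}$, and use the uniform Lipschitz bound to deduce $u_\e \to u$ locally uniformly on $\T \times [0,\infty)$; on any compact subset of $\{u > 0\} \cap (\T \times (0,\infty))$ the functions $u_\e$ are uniformly harmonic for small $\e$, so $\nabla u_\e \to \nabla u$ locally uniformly there. Second, fix $\delta > 0$ and work on the open set $\Omega_\e^\delta := \{u_\e > \e\} \cap \{y < T\}$ for large $T$; here I want to show $|\nabla u_\e|^2 \leq (A-By)_+ + o_\e(1) + C\delta$ by a maximum principle. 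The comparison function should be $(A-By)_+$ plus a correction absorbing the Lipschitz bound near $\{y = 0\}$ (where the Hopf-type computation in Lemma~\ref{lem:lip} already shows $\partial_y |\nabla u_\e|^2 = 0$, so $\{y=0\}$ is not where the max is attained) and near the free boundary $\{u_\e = \e\}$ (where $|\nabla u_\e|^2 \leq C$ but we want it $\leq (A-By_0)_+$; this is where smallness of $\e$ and the precise free boundary condition must enter). Third, on the boundary portion $\{y = T\}$ and as $y \to \infty$ the bound holds trivially since $|\nabla u_\e| \to 0$ and $(A-By)_+ = 0$ there. Fourth, inside $\Omega_\e^\delta$, the subharmonicity defect of $|\nabla u_\e|^2 - (A-By)_+$ is $O(\e)$ in a suitable norm (since $\b_\e(u_\e) = 0$ on $\{u_\e > \e\}$, in fact the defect there is exactly $2|D^2u_\e|^2 \geq 0$, so $|\nabla u_\e|^2$ is genuinely subharmonic on $\{u_\e > \e\}$!), so the maximum principle gives the bound up to the boundary contributions. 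Fifth, let $\e \to 0$ and then $\delta \to 0$, using the locally uniform convergence $\nabla u_\e \to \nabla u$ on $\{u>0\}$ and $\nabla u = 0$ a.e.\ on $\{u = 0\}$, to conclude $|\nabla u|^2 \leq (A-By)_+$ a.e.

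The main obstacle I anticipate is controlling $|\nabla u_\e|^2$ on the transition layer $\{0 < u_\e \leq \e\}$ — i.e.\ showing that there the gradient is bounded not merely by a generic constant $C(A,B,\beta)$ (as Lemma~\ref{lem:lip} gives) but by something approaching $(A-By_0)_+$ as $\e \to 0$. The natural way to handle this is to note that since $|\nabla u_\e|^2$ is subharmonic on the larger set $\{u_\e > 0\}$ (where $\Delta u_\e = \b_\e(u_\e)(A-By)_+ \geq 0$ still makes $|\nabla u_\e|^2$ subharmonic up to a controlled defect, and in the limit $\b_\e(u_\e) \to 0$ in $L^1_{loc}$), one can run the maximum principle on $\{u_\e > 0\} \cap \{y < T\}$ directly, where the only boundary is $\{y = 0\}$ (handled by Hopf as above, or by the explicit separation-of-variables bound since $u_\e$ is harmonic near $y=0$), $\{y = T\}$ (handled by decay), and the free boundary $\{u_\e = 0\}$ in the limit — on which $|\nabla u|^2 = (A-By)_+$ by the limiting free boundary condition, at least at regular points, and $|\nabla u| = 0$ elsewhere. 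Making this rigorous at the level of the $\e$-problem, where $\{u_\e = 0\}$ has empty interior by the strong maximum principle (Lemma~\ref{lem:bounded}), means one actually works on all of $\T \times \{0 < y < T\}$ with $|\nabla u_\e|^2 - (A-By)_+ \cdot(1 + \text{small})$ as the function, using that its Laplacian is $\geq -C\|\b_\e(u_\e)\|$ (small in the limit) and that its sup over $y = 0$ and $y \to \infty$ is controlled, then passing $\e \to 0$ using that $\b_\e(u_\e)(A-By)_+ \rightharpoonup$ a measure supported on the free boundary with the right sign. I expect the cleanest route is to avoid the transition layer entirely: observe $|\nabla u_\e|^2$ is subharmonic on the full open set $\{u_\e > 0\} = \T\times(0,\infty)$ minus a measure-zero set only at $y=0$... but actually since $\Delta u_\e \geq 0$ everywhere, $|\nabla u_\e|^2$ is subharmonic on all of $\T \times (0,\infty)$, so one simply needs the boundary bound $|\nabla u_\e|^2 \leq (A-By)_+ + o_\e(1)$ on $\{y=0\}$ — which fails! — forcing the argument back to comparing with $(A-By)_+$ only where $u_\e$ is small; so the genuine difficulty is precisely reconciling the boundary value $|\nabla u_\e(x,0)|^2$ (which need not be $\leq A$) with the claimed bound, and this must be resolved by noting the bound is only asserted in the limit and at points of $\{u = 0\}$ the limit gradient vanishes while at points of $\{u>0\}$ one has interior subharmonicity with good free-boundary boundary data — so the final argument is: $|\nabla u|^2$ is subharmonic on $\{u > 0\}$, equals $(A-By)_+$ on $\partial\{u>0\}\cap\{u>0\text{-side}\}$ at regular points and is bounded by it in a barrier sense at all free boundary points via the limit of the $\e$-estimates, tends to $0$ at infinity, and $\{y=0\} \subset \{u>0\}$ with $u = 1$ there forcing (by the explicit Fourier expansion and the constraint) $|\nabla u|^2 \leq A$ on $y=0$, which is $\leq (A-B\cdot 0)_+$ — hence the maximum principle on $\{u>0\}$ closes the estimate, and $|\nabla u|^2 = 0 \leq (A-By)_+$ on $\{u=0\}$ trivially.
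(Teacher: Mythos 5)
There is a genuine gap, and you in fact point at it yourself: the whole difficulty is to show that on the transition layer $\{0 < u_\e \leq \e\}$ the gradient is bounded by (asymptotically) the \emph{sharp} constant $(A-By_0)_+$, not by the generic constant $C(A,B,\beta)$ of Lemma \ref{lem:lip} — but your proposal never actually closes this. Your final "resolution" is circular: you invoke "the limiting free boundary condition $|\nabla u|^2=(A-By)_+$ at regular points" and a bound "in a barrier sense at all free boundary points via the limit of the $\e$-estimates", yet the $\e$-estimates only give the crude constant, the free boundary condition of Lemma \ref{lem:FB} is only available where $\partial\{u>0\}$ is already known to be smooth (regularity is proved much later, and only for the symmetric solutions), and asserting the sharp bound up to $\partial\{u>0\}$ is essentially the statement to be proved. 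Two side claims are also unjustified: $|\nabla u_\e|^2$ is \emph{not} subharmonic on all of $\T\times(0,\infty)$, since $\Delta|\nabla u_\e|^2\geq 2\nabla u_\e\cdot\nabla\bigl(\b_\e(u_\e)(A-By)_+\bigr)$ contains the terms $2\b_\e'(u_\e)|\nabla u_\e|^2(A-By)_+ - 2B\b_\e(u_\e)\partial_y u_\e$, which have no sign on the layer ($\b_\e$ is a bump, not monotone); and the assertion $|\nabla u|^2\leq A$ on $\{y=0\}$ "by the Fourier expansion and the constraint" has no basis — the expansion in Lemma \ref{lem:lip} lives in $\{y>A/B\}$, and the bound at $y=0$ is again part of the conclusion, not a hypothesis. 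A telling symptom is that your argument never uses the normalization $2\cB\leq 1$, without which the sharp inequality is false.

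The paper closes the layer estimate by a completely different mechanism: assuming the sup of $|\nabla u_\e|^2-(A-By)_+$ over points at distance $\e$ from $\{u_\e\leq\e\}$ stays above $\delta>0$, it blows up at those points at scale $\e$, obtaining a limit profile $v_0\geq 0$ solving $\Delta v_0=\b(v_0)(A-By_0)$ with $|\nabla v_0(0)|^2-(A-By_0)\geq\delta$ attained as an interior maximum; the strong maximum principle forces $|\nabla v_0|^2$ constant and $v_0$ affine on the harmonic component, and unique continuation reduces $v_0$ to a one–dimensional solution of $v_0''=\b(v_0)(A-By_0)$. The first integral $(v_0')^2-2\cB(v_0)(A-By_0)=\mathrm{const}$ together with $2\cB\leq 1$ then shows $(v_0')^2\leq A-By_0$ in either case (critical point or monotone), contradicting $\delta>0$. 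This ODE classification of the semilinear transition profile is the ingredient your maximum-principle scheme is missing; the Hopf argument at $y=0$ and the decay as $y\to\infty$, which you do use correctly, are only the easy boundary pieces of that argument.
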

\begin{proof}
Suppose towards a contradiction that 
$$\limsup_{k\to \infty} \sup_{(x,y)\in \left(\T \times [0, \infty)\right)\cap  \{ \text{dist}(\cdot, \{ u_\e \leq \e\})>\epsilon\}}\left(|\nabla u_\e|^2 - (A-By)_+\right) = \delta > 0.$$
Since $|\nabla u_\e|^2 - (A-By)_+ = |\nabla u_\e|^2$ for $y\geq A/B$,
the above $\sup$ must by the proof of the previous lemma be attained in $\T \times [0,A/B]$.
Moreover, by the uniform Lipschitz continuity, $u_\e$ is harmonic and smooth on $\T \times [0, \kappa]$
for some positive $\kappa$ independent of $\e$. As in the proof of the previous lemma,
$|\nabla u_\e|^2$ attains its supremum on $\T \times [0, \kappa]$ at the boundary $y=\kappa$.
However, $|\nabla u_\e|^2 - (A-By)_+$ attaining a maximum at $y=0$
would imply that $|\nabla u_\e|^2$ attains its maximum at $y=0$, too, which
we have already excluded above.

Since $|\nabla u_\e|^2 - (A-By)_+$
is subharmonic in the interior of the set where $u_\e$ is harmonic,
\[
\sup_{(x,y)\in \left(\T \times [0, \infty)\right)\cap  \{ \text{dist}(\cdot, \{ u_\e \leq \e\})>\epsilon\}}\left(|\nabla u_\e|^2 - (A-By)_+\right)
\]
is attained on the set $\left(\T \times [\kappa,A/B]\right)\cap \{ \text{dist}(\cdot, \{ u_\e \leq \e\})=\epsilon\}$.
By the uniform Lipschitz continuity, we may take a sequence of points $(x_k,y_k) \in \left(\T \times [\kappa,A/B]\right)
\cap \{ \text{dist}(\cdot, \{ u_\e \leq \e\})=\epsilon\}$
such that
$$\limsup_{k\to \infty} \left(|\nabla u_{\e_k}(x_k,y_k)|^2 - (A-By_k)\right)
= \delta$$
and $$\sup_{B_{\e_k/2}(x_k,y_k)} \left(|\nabla u_{\e_k}|^2 - (A-By)\right) 
\leq o(1) + |\nabla u_{\e_k}(x_k,y_k)|^2 - (A-By_k)$$
as $k\to\infty$.
The sequence $v_k = u_{\e_k}((x_k,y_k)+ \e_k \cdot)/\e_k$ satisfies then
\begin{equation*}
	\begin{cases}
		v_k \geq 0,\\
		\Delta v_k =  \beta(v_k)(A-B(y_k+\e_k y)),\\
		\Delta v_k =  0 \text{ in }B_1(0),\\
		v_k > 1 \text{ in }B_1(0),\\ 
		v_k(w_k,z_k) = 1 \text{ for some  } (w_k,z_k) \in \partial B_1(0),\\
		\limsup_{k\to \infty} \left(|\nabla v_k(0)|^2 - (A-By_0)\right) \geq  \delta > 0,\\
		 \sup_{B_{1/2}(0)} \left(|\nabla v_k|^2 - (A-B(y_k+\e_k y))\right) 
	 \leq o(1) + |\nabla v_k(0)|^2 - (A-B y_k).
	\end{cases}
\end{equation*}
Passing to a limit $v_0, \xi_0, (x_0,y_0)$ for a subsequence
we obtain that
\begin{equation*}
	\begin{cases}
v_0 \geq 0,\\
   \Delta v_0 =  \beta(v_0)(A-B y_0),\\
    \Delta v_0 =  0 \text{ in }B_1(0),\\
   v_0 \geq 1 \text{ in }B_1(0),\\ 
   v_0(w_0,z_0) = 1 \text{ for some  }(w_0,z_0) \in \partial B_1(0),\\
   |\nabla v_0(0)|^2 - (A-By_0) \geq  \delta > 0,\\
    \sup_{B_{1/2}(0)} \left(|\nabla v_0|^2 - (A-By_0)\right) 
\leq |\nabla v_0(0)|^2 - (A-By_0).
\end{cases}
\end{equation*}
The strong maximum principle implies that the (in $\{v_0>1\}$) subharmonic
function $|\nabla v_0|^2$ is constant 
and that the (in $\{v_0>1\}$) harmonic
function is affine linear
in the connected component
of $\{v_0>1\}$ containing $0$.
There is a half-plane containing $0$ and touching
$(w_0,z_0)$ such that $v_0>1$ in that half plane
and $v_0=1$ on the boundary of that half plane.
By the unique continuation theorem \cite{aron}
(applied to the linear Schr\"odinger equation satisfied by the difference of any
two solutions as above),
$v_0$ is after rotation a function of one variable $x$
satisfying $v_0(0)=1$, $v_0''= \beta(v_0)(A-B y_0)$
and $v_0'(0) > A-B y_0\geq 0$.
Multiplying the ODE by $v_0'$, we obtain
$$(v_0')^2 -  2\cB(v_0)(A-B y_0) = const.$$
We distinguish two cases: if the solution $v_0$
has a critical point $x_0$, then we obtain
$$(v_0')^2 -  2\cB(v_0)(A-B y_0) = -  2\cB(v_0(x_0))(A-B y_0),$$
implying that $(v_0')^2 \nearrow (A-B y_0) (1-2\cB(v_0(x_0)))$
as $|x-x_0|\nearrow +\infty$, a contradiction to $ |\nabla v_0(0)|^2 - (A-By_0) \geq  \delta > 0$.
If the solution $v_0$ is increasing in $x$, then the non-negativity of $v_0$
implies that $v_0(x) \to 0$ as $x \to -\infty$, $(v_0')^2 -  2\cB(v_0)(A-B y_0) = 0$,
$(v_0')^2 \nearrow A-B y_0$ as $x  \nearrow +\infty$,
a contradiction to $ |\nabla v_0(0)|^2 - (A-By_0) \geq  \delta > 0$.
Thus the supposition at the beginning of the proof must be false.

We obtain that at each point at which the Lipschitz continuous limit function $u$ is
positive, $|\nabla u|^2 \leq (A-By)_+$. Since $|\nabla u|^2=0$ a.e. on the set
$\{ u=0 \}$, we obtain the statement of the theorem. 
\end{proof}
\begin{corollary}[No fluid above $A/B$]
Every weak $W^{1,2}$-limit $u$ of outer variation critical points $u_\e$ of $E_\e$ as $\e \to 0$
	satisfies $u=0$ in $y \geq A/B$.
\end{corollary}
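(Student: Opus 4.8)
The plan is to read off the corollary from Lemma~\ref{lem:bern}. Since $(A-By)_+ \equiv 0$ on $\{y \ge A/B\}$, that lemma gives $\nabla u = 0$ almost everywhere on the connected set $\T \times (A/B,\infty)$, so $u$ is equal there to a single constant $q$; as $0 \le u \le 1$ (this bound passes to the limit, cf.\ Lemma~\ref{lem:FB}) we have $q \in [0,1]$, and everything reduces to showing $q = 0$.

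First I would record that the limit $u$ is harmonic on $\{u>0\} \cap (\T\times(0,\infty))$. Indeed, by Lemma~\ref{lem:lip} the $u_\e$ are uniformly Lipschitz, so $u_\e \to u$ locally uniformly and $u$ is Lipschitz; on a ball where $u > \delta > 0$ we have $u_\e > \delta/2 > \e$ for $\e$ small, and since $\b_\e$ vanishes on $[\e,\infty)$ this forces $\Delta u_\e = 0$ there, so $u$ is harmonic on the ball. Now I would argue by contradiction: suppose $q>0$, and let $\Omega_0$ be the connected component of $\{u>0\}\cap(\T\times(0,\infty))$ containing $\T\times(A/B,\infty)$. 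Then $u$ is harmonic on $\Omega_0$ and coincides with the harmonic function $q$ on the nonempty open set $\T\times(A/B,\infty)\subseteq\Omega_0$, so $u\equiv q$ on $\Omega_0$ by unique continuation. If $\Omega_0 \ne \T\times(0,\infty)$, then $\Omega_0$ has a boundary point $p$ with positive $y$-coordinate; approaching $p$ from $\Omega_0$ gives $u(p)=q$, while $p\notin\{u>0\}$ gives $u(p)=0$, so $q=0$, a contradiction. Hence $\Omega_0 = \T\times(0,\infty)$, and letting $y\to0$ yields $q = u(x,0) = 1$, i.e.\ $u = U_\infty$.

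Thus either $q=0$ (the claim) or $u = U_\infty$, and it remains to rule out the latter for the limits at hand. The $u_\e$ of interest are the mountain pass critical points of Lemma~\ref{lem:mountainpass}, for which $E_\e[u_\e] = G_\e > E_\e[U_\infty] + \eta_2(A,B)$. This strict gap passes to the limit: invoking the compactness result of \cite{kw} as in Remark~\ref{r:compactness} one has $\chi_{\{u_{\e_k}>0\}}\to\chi_{\{u>0\}}$ in $L^1_{\mathrm{loc}}$, and together with the uniform Lipschitz bound and the exponential decay of $\nabla u_\e$ for $y>A/B$ established in Lemma~\ref{lem:lip} this gives $E_{\e_k}[u_{\e_k}] \to E[u]$; hence $E[u] \ge E[U_\infty] + \eta_2 > E[U_\infty]$ and $u \ne U_\infty$, so $q=0$ and $u = 0$ on $\{y\ge A/B\}$ (including $y=A/B$ by continuity). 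I expect this final paragraph --- transferring the mountain pass energy gap to the limit --- to be the main obstacle; steps one through three are elementary once Lemma~\ref{lem:bern} is in hand. (When the corollary is used for the non-flat critical points of Theorems~\ref{main1}--\ref{main2}, ruling out $U_\infty$ is of course automatic since $U_\infty$ is independent of~$x$.)
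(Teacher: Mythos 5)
Your proposal is correct and follows essentially the same route as the paper: Lemma~\ref{lem:bern} forces $u$ to be constant above $y=A/B$, a positive constant propagates through the positivity set (harmonicity plus connectedness) to give $u\equiv U_\infty$, and this is excluded using the mountain pass energy gap. The paper compresses that last step into the citation ``Lemma~\ref{lem:mountainpass} tells us $u\neq U_\infty$''; your explicit passage of the gap $E_{\e_k}[u_{\e_k}]\ge E_{\e_k}[U_\infty]+\eta_2$ to the limit (via strong gradient convergence and Remark~\ref{r:compactness}) is exactly the justification carried out in the existence lemma of Section~\ref{s:existence}, so no genuinely different ideas are involved.
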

\begin{proof}
By Lemma \ref{lem:bern}, $\nabla u=0$ in $y \geq A/B$.
But then $u$ is constant in $y \geq A/B$.
In the case that the constant is positive,
$u$ is constant
in the connected component 
of $\{ u>0 \}$ containing $y \geq A/B$.
In this case it follows that $u\equiv 1=U_\infty$.
But Lemma \ref{lem:mountainpass} tells us that $u\ne U_\infty$. So $u=0$ in $y \geq A/B$.
\end{proof}

\section{Existence of non-flat critical points}\label{s:existence}

In this section we prove Theorem \ref{main1} and the existence portion of Theorem \ref{main2}, starting with the former.

\begin{lemma}[Existence of non-flat limit]
	Assume that $B < 2 (\frac{A}{3})^{3/2}$ and also 
	\[
		2 (\frac{A}{B} - Y_+ ) 2 \pi \coth(2 \pi Y_+) < 1.
	\]
	Then there exists a domain variation critical point of $E$ other than $U_+, U_-, U_\infty$ (and in particular, it is not independent of $x$).
\end{lemma}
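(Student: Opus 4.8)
The plan is to take a sequence of the mountain pass critical points produced in Section~\ref{s:mountainpass}, pass to the limit $\e\to 0$ using the uniform Lipschitz bound for compactness and the compactness theorem of \cite{kw} applied to $E$ (Remark~\ref{r:compactness}) to guarantee the limit is an inner (= domain) variation critical point of $E$, and then rule out the three flat solutions: $U_+$ by a Morse index argument, and $U_-,U_\infty$ by an energy gap argument.

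First I would fix $\e_k\searrow 0$ below the thresholds $\e_1,\e_2,\e_3$ appearing in Lemmas~\ref{lem:stable}, \ref{lem:mountainpass}, \ref{lem:lip}, and let $u_k:=u_{\e_k}$ be the mountain pass critical point of $E_{\e_k}$ from Lemma~\ref{lem:mountainpass}, so that $E_{\e_k}[u_k]=G_{\e_k}$, $u_k$ has outer variation Morse index at most $1$, and $G_{\e_k}>\max\{E_{\e_k}[U_-],E_{\e_k}[U_\infty]\}+\eta_2$. Testing the definition of $G_\e$ with one fixed competitor path gives $G_{\e_k}\le C(A,B)$, so $\sup_k E_{\e_k}[u_k]<\infty$. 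By Lemma~\ref{lem:lip} the $u_k$ are uniformly Lipschitz and satisfy $0\le u_k\le 1$, and for $y>A/B$ they are harmonic with a uniform exponential decay (from the proof of Lemma~\ref{lem:lip}). Hence, along a subsequence, $u_k\to u$ locally uniformly and weakly in $W^{1,2}_{\mathrm{loc}}$, and combining local $W^{1,2}$ compactness with the uniform exponential tail this convergence is in fact strong in $H$. By the compactness theorem of \cite{kw} applied to $E$ (Remark~\ref{r:compactness}), $u$ is an inner variation critical point of $E$, $\nabla u_k\to\nabla u$ in $L^2_{\mathrm{loc}}$, and $\chi_{\{u_k>0\}}\to\chi_{\{u>0\}}$ in $L^1_{\mathrm{loc}}$.

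Next I would record that $G_{\e_k}=E_{\e_k}[u_k]\to E[u]$: strong $H$ convergence handles the Dirichlet term, while on the compact set $\T\times[0,A/B]$ the bound $2\cB_{\e_k}(u_k)\le\chi_{\{u_k>0\}}$, the pointwise convergence $2\cB_{\e_k}(u_k)\to 1$ on $\{u>0\}$ (by uniform convergence of $u_k$), and the $L^1$ convergence of $\chi_{\{u_k>0\}}$ give $\int 2\cB_{\e_k}(u_k)(A-By)_+\to\int\chi_{\{u>0\}}(A-By)_+$. Since $E_\e[U_-]\nearrow E[U_-]$, it follows that $E[u]=\lim_k G_{\e_k}\ge\lim_k\bigl(E_{\e_k}[U_-]+\eta_2\bigr)=E[U_-]+\eta_2>E[U_-]$, so $u\ne U_-$; the identical argument with $U_\infty$ gives $u\ne U_\infty$. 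To exclude $U_+$, suppose $u=U_+$. Then $u_k\to U_+$ strongly in $H$ with $\chi_{\{u_k>0\}}\to\chi_{\{U_+>0\}}$ in $L^1$, so Lemma~\ref{lem:variationformulas}(4) — applicable precisely because of the hypothesis $2(\frac{A}{B}-Y_+)2\pi\coth(2\pi Y_+)<1$ — shows that $u_k$ has outer variation Morse index at least $2$ for $k$ large, contradicting Lemma~\ref{lem:mountainpass}. Therefore $u\notin\{U_+,U_-,U_\infty\}$, and by Lemma~\ref{lem:1d} (using $B<2(\frac{A}{3})^{3/2}$) $u$ is not independent of $x$, which proves the lemma.

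The main obstacle is the invocation of the compactness theorem of \cite{kw} in the $y$-weighted setting: this is the one genuinely deep input, and its validity for $E$ rather than the translation-invariant Alt--Caffarelli functional rests on the frequency formula surviving the weight, as discussed in Remark~\ref{r:compactness}. A secondary technical point is ensuring that the relevant convergences are strong \emph{up to $y=\infty$}, which is needed both for $G_{\e_k}\to E[u]$ and for the hypotheses of Lemma~\ref{lem:variationformulas}(3)--(4); this is handled by the uniform exponential decay of the $u_k$ for $y>A/B$.
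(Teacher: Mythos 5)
Your proposal is correct and follows essentially the same route as the paper's proof: mountain pass critical points of $E_{\e_k}$, the uniform Lipschitz bound for compactness, the compactness theorem of \cite{kw} via Remark \ref{r:compactness} to identify the limit as a domain variation critical point with convergent energies, the stability gap $\eta_2$ to exclude $U_-$ and $U_\infty$, Lemma \ref{lem:variationformulas}(4) against the Morse index bound of Lemma \ref{lem:mountainpass} to exclude $U_+$, and Lemma \ref{lem:1d} to conclude non-flatness. The only adjustment needed is cosmetic: since outer variation critical points of $E_{\e_k}$ are strictly positive (Lemma \ref{lem:bounded}), the convergence to quote from Remark \ref{r:compactness} is $2\cB_{\e_k}(u_k)\to\chi_{\{u>0\}}$ in $L^1_{\mathrm{loc}}$ rather than $\chi_{\{u_k>0\}}\to\chi_{\{u>0\}}$ (the latter is vacuous because $\chi_{\{u_k>0\}}\equiv 1$), and with that substitution your energy-convergence and Morse-index steps are exactly the paper's.
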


\begin{proof}
	Let $u_\e$ be the outer variation critical points from Lemma \ref{lem:mountainpass}. By Lemma \ref{lem:lip}, they satisfy $|\nabla u_\e|\leq C$, with $C$ independent of $\e$. We may then extract a sequence $u_{\e_k}$ converging locally uniformly to a $u \in H$
	as $k\to\infty$. It is easy to see that $u$ is harmonic when positive, which implies that the sequence converges strongly in $\dot{W}^{1,2}(\T \times [0, \infty))$: for any smooth nonnegative test function $\eta \in C_c^\infty(\T \times (0, \infty))$,
	\[
		\int |\nabla u_\e|^2 \eta = - \int \left(u_\e \eta \Delta u_\e + u_\e \nabla u_\e \cdot \eta \right)= - \int \left(u_\e \eta \b_\e(u_\e) + u_\e \nabla u_\e \cdot \nabla \eta\right).
	\]
	The first term is negative, so
	\[
		\limsup_{k\to\infty} \int |\nabla u_{\e_k}|^2 \eta \geq \limsup_{k\to\infty} \left(- \int u_{\e_k} \nabla u_{\e_k} \cdot \nabla \eta \right)= - \int u \nabla u \cdot \nabla \eta = \int |\nabla u|^2 \eta.
	\]
	This implies strong convergence of $\nabla u_\e \rightarrow \nabla u$ as $k\to\infty$.
	
	Now take any $U \cc \T \times [0, A/B]$ and apply Remark \ref{r:compactness}: we have that either $2\cB_{\e_k}(u_{\e_k}) \rightarrow \chi_{\{u > 0\}}$ in $L^1(U)$ as $k\to\infty$, or $u \equiv 0$ on $U$. By selecting $U$ of the form $\T \times (\d, A/B -\d)$ and (from the Lipschitz estimate) using that $u > 0$ near $y = \d$, we see that in fact $2\cB_{\e_k}(u_{\e_k}) \rightarrow \chi_{\{u > 0\}}$ a.e. on $[0, A/B]$ as $k\to\infty$. Then $2\cB_{\e_k}(u_{\e_k}) (A - B y)_+ \rightarrow \chi_{\{ u > 0\} } (A - B y)_+$ in $L^1(\T \times [0, \infty))$ as $k\to\infty$, and so we may pass to the limit in the domain variation formula to get that $u$ is a domain variation critical point of $E$.

	Passing in the limit in the energy, $E[u] \geq \max \{E [U_-], E [U_\infty]\} + \eta_2(A, B)$, so certainly $u$ is not $U_-$ or $U_\infty$. If $u = U_+$, then by Lemma \ref{lem:variationformulas} $u_{\e_k}$ has Morse index at least $2$ for $k$ large, which contradicts Lemma \ref{lem:lip}. From Lemma \ref{lem:1d}, it follows that $u$ is not independent of $x$.
\end{proof}

To prove Theorem \ref{main2}, we will need to apply Steiner rearrangements in the min-max procedure. We recall the setup now. Given a Borel set $I \ss \T$, let the \emph{symmetric rearrangement} $I^*$ be defined by
\[
	I^* = \begin{cases}
		\emptyset & |I| = 0 \\
		[-|I|/2, |I|/2] & |I| \neq 0.
	\end{cases}
\]
For a Borel subset $F \ss \T \times [0, \infty)$, let the \emph{Steiner rearrangement} $F^*$ be obtained by performing a symmetric rearrangement for every $y$:
\[
	F^* = \cup_{y \in [0, \infty)} \{x : (x, y) \in F\}^* \times \{y\}.
\]
Finally, for a $u \in W^{1,2}(\T \times [0, \infty))$, let the \emph{Steiner rearrangement} $u^*$ be given by
\[
	u^*(x, y) = \sup \{t \in \R : (x, y) \in \{u \geq t\}^*  \}.
\]
Under this definition $u^*$ is an upper semicontinuous function. It is easy to see from the definition and Fubini's theorem that
\[
	\int 2\cB_\e(u) (A - B y)_+ = \int 2\cB_\e(u^*) (A - By)_+ , \qquad \int \chi_{\{u > 0\}} (A - B y)_+ = \int \chi_{\{u^* > 0\}} (A - By)_+.
\]
On the other hand, the Polya-Szeg\"o inequality gives that the Dirichlet energy of $u^*$ is smaller than that of $u$.

\begin{proposition}\label{prop:sym} Let $u \in H$. Then $u^* \in H$, $E[u^*] \leq E[u]$, and $E_\e[u^*] \leq E_\e[u]$.
\end{proposition}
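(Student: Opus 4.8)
The plan is to combine two facts that are essentially recorded already in the paragraph preceding the statement, and then to verify separately that $u^*\in H$. The first fact is that, for a.e.\ $y$, the slice $u^*(\cdot,y)$ is by the very definition of the Steiner rearrangement the one-dimensional symmetric decreasing rearrangement of $u(\cdot,y)$ on the circle $\T$; in particular $u^*(\cdot,y)$ and $u(\cdot,y)$ are equimeasurable, so $\int_\T\Phi(u^*(x,y))\,dx=\int_\T\Phi(u(x,y))\,dx$ for every nonnegative Borel $\Phi$. Applying this with $\Phi=2\cB_\e$ and with $\Phi=\chi_{(0,\infty)}$, together with Fubini's theorem and $(A-By)_+\ge 0$, recovers the two mass-term identities $\int 2\cB_\e(u^*)(A-By)_+=\int 2\cB_\e(u)(A-By)_+$ and $\int\chi_{\{u^*>0\}}(A-By)_+=\int\chi_{\{u>0\}}(A-By)_+$. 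The second fact, as recalled above, is the P\'olya--Szeg\H{o} inequality for Steiner symmetrization, $\int|\nabla u^*|^2\le\int|\nabla u|^2$. Since $\cB_\e\ge 0$ and $(A-By)_+\ge 0$, adding these immediately yields $E_\e[u^*]\le E_\e[u]$ and $E[u^*]\le E[u]$.

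It remains to check $u^*\in H$. From the P\'olya--Szeg\H{o} inequality, $\nabla u^*\in L^2(\T\times[0,\infty))$ with $\|\nabla u^*\|_{L^2}\le\|\nabla u\|_{L^2}<\infty$. Since $u\in\dot{W}^{1,2}(\T\times[0,\infty))$, there is a unique constant $c_0$ with $u-c_0\in L^2(\T\times[0,\infty))$; applying the slicewise equimeasurability with $\Phi(s)=|s-c_0|^2$ and integrating in $y$ gives $\|u^*-c_0\|_{L^2(\T\times[0,\infty))}=\|u-c_0\|_{L^2(\T\times[0,\infty))}<\infty$, which shows at once that $\inf_{c\in\R}\|u^*-c\|_{L^2}<\infty$ — hence $u^*\in\dot{W}^{1,2}(\T\times[0,\infty))$ — and that $u^*\in L^2(\T\times(0,T))$ for every $T$. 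Combined with $\nabla u^*\in L^2(\T\times(0,T))$, this gives $u^*\in W^{1,2}(\T\times(0,T))$, so $y\mapsto u^*(\cdot,y)$ is continuous into $L^2(\T)$ and the trace of $u^*$ on $\{y=0\}$ is the $L^2(\T)$-limit of the slices $u^*(\cdot,y)$ as $y\to 0^+$. Applying equimeasurability once more, with $\Phi(s)=|s-1|^2$, gives $\|u^*(\cdot,y)-1\|_{L^2(\T)}=\|u(\cdot,y)-1\|_{L^2(\T)}\to 0$ as $y\to 0^+$, the right-hand side vanishing because $u(x,0)=1$. Hence the trace of $u^*$ on $\{y=0\}$ equals $1$, so $u^*\in H$ and the proof is complete.

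There is no serious obstacle here, but a careful write-up should pin down three points: (a) the validity of the P\'olya--Szeg\H{o} inequality for Steiner symmetrization on the periodic half-cylinder $\T\times[0,\infty)$, which holds because symmetrizing a measurable subset of $\T$ of measure at most $1$ is unproblematic and the isoperimetric inequality on the circle is elementary, so the usual coarea-formula proof applies (cf.\ the treatments in Kawohl or Lieb--Loss); (b) the identification, valid a.e., of $u^*$ as defined through superlevel sets with the slicewise rearrangement; and (c) the fact that $u^*$ actually lies in the space $\dot{W}^{1,2}(\T\times[0,\infty))$ — decaying to a single constant at infinity and having the correct trace at $y=0$ — for which it is the equimeasurability of the slices, rather than merely the $L^2$-nonexpansivity of rearrangement, that is exactly what is needed. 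If convenient, one may first reduce to $0\le u\le 1$ by truncation, which does not increase $E$ or $E_\e$ and preserves the trace, though this reduction is not necessary.
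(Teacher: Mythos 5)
Your argument is correct and takes essentially the same route as the paper: the paper records the slicewise equimeasurability identities for the bulk terms just before the statement and then cites Kawohl (Theorem 2.31) for the P\'olya--Szeg\H{o} inequality for Steiner symmetrization, which are exactly the two ingredients you combine. Your extra verification that $u^*\in H$ (membership in $\dot{W}^{1,2}(\T\times[0,\infty))$ and the trace $u^*(\cdot,0)=1$ via $L^2(\T)$-continuity of the slices plus equimeasurability) is a detail the paper leaves to the reference, and it is sound.
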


A proof can be found in \cite{K85} Theorem 2.31.

\begin{proposition}\label{prop:symcont}
	The map $u \mapsto u^*$ is a continuous function $H \rightarrow H$.
\end{proposition}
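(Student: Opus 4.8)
The plan is to establish continuity of Steiner rearrangement as a map $H \to H$ by a standard two-step argument: first showing that $u \mapsto u^*$ is continuous into $L^2_{\mathrm{loc}}$ (equivalently, into $\dot W^{1,2}$ weakly, using the uniform Dirichlet-energy bound from Proposition \ref{prop:sym}), and then upgrading to strong $\dot W^{1,2}$ convergence by a "no energy loss" argument. Suppose $u_k \to u$ in $H$. The key elementary fact is the $L^p$-contractivity of symmetric/Steiner rearrangement on each horizontal slice: for Borel $I, J \subseteq \T$ one has $|I^* \triangle J^*| \leq |I \triangle J|$, and more generally for functions $\|f^* - g^*\|_{L^2(\T)} \leq \|f - g\|_{L^2(\T)}$ on each slice $y$ where one restricts to functions bounded by, say, $1$ (which is no loss here: by Lemma \ref{lem:bounded} and Lemma \ref{lem:FB}, or simply by truncating, all relevant functions take values in $[0,1]$, and truncation commutes with rearrangement). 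Integrating in $y$ over a compact set $K = \T \times [0,R]$ gives $\|u_k^* - u^*\|_{L^2(K)} \leq \|u_k - u\|_{L^2(K)} \to 0$, so $u_k^* \to u^*$ in $L^2_{\mathrm{loc}}$.

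Next I would obtain weak convergence $\nabla u_k^* \rightharpoonup \nabla u^*$ in $L^2$. The Polya–Szegő inequality (Proposition \ref{prop:sym}) gives $\|\nabla u_k^*\|_{L^2} \leq \|\nabla u_k\|_{L^2}$, which is bounded since $u_k \to u$ in $H$; hence along a subsequence $u_k^* \rightharpoonup w$ weakly in $\dot W^{1,2}$. But $u_k^* \to u^*$ in $L^2_{\mathrm{loc}}$ from the previous step, so $w = u^*$, and in particular $u^* \in H$ (the boundary condition $u^*(x,0) = 1$ is preserved because on the slice $y=0$ the set $\{u=1\}$ is all of $\T$). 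Since the limit is identified independently of the subsequence, the whole sequence converges weakly. By weak lower semicontinuity of the Dirichlet energy, $\|\nabla u^*\|_{L^2} \leq \liminf_k \|\nabla u_k^*\|_{L^2}$.

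The main obstacle — and the heart of the proof — is promoting this to strong convergence, i.e. showing $\|\nabla u_k^*\|_{L^2} \to \|\nabla u^*\|_{L^2}$. Here I would use the Polya–Szegő inequality quantitatively together with the strong convergence $u_k \to u$: we have
\[
	\|\nabla u^*\|_{L^2}^2 \leq \liminf_k \|\nabla u_k^*\|_{L^2}^2 \leq \limsup_k \|\nabla u_k^*\|_{L^2}^2 \leq \limsup_k \|\nabla u_k\|_{L^2}^2 = \|\nabla u\|_{L^2}^2.
\]
This alone is not enough; one needs that the Dirichlet energy is actually \emph{continuous} along the rearrangement in this limit. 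The clean way is to invoke the known continuity of Steiner symmetrization on $W^{1,2}$, which is exactly the content of \cite{K85} (the same reference supplying Proposition \ref{prop:sym}); Burchard's and Brothers–Ziemer-type results also give this. Concretely: apply the $L^2_{\mathrm{loc}}$ continuity established above plus the fact (again from \cite{K85}) that $\|\nabla u_k^*\|_{L^2} \to \|\nabla v^*\|_{L^2}$ whenever $\|\nabla u_k - \nabla v\|_{L^2} \to 0$, which combined with weak convergence $\nabla u_k^* \rightharpoonup \nabla u^*$ yields strong convergence $\nabla u_k^* \to \nabla u^*$ in $L^2$ via the Hilbert-space fact that weak convergence plus convergence of norms implies strong convergence. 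Then $u_k^* \to u^*$ in $H$, and the inequalities $E[u^*] \leq E[u]$, $E_\e[u^*] \leq E_\e[u]$ are already recorded in Proposition \ref{prop:sym}, so there is nothing further to check. The only genuinely delicate point is thus citing (or, if one wants to be self-contained, proving) the strong $W^{1,2}$-continuity of the one-dimensional symmetric decreasing rearrangement, for which one slices in $y$, uses the one-dimensional theory, and applies dominated convergence in $y$ using the uniform Lipschitz bound (Lemma \ref{lem:lip}) to control the integrand.
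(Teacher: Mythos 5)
The soft parts of your argument (slice-wise $L^2$ contractivity, hence $L^2_{\mathrm{loc}}$ continuity; uniform energy bounds via Polya--Szeg\H{o}; identification of the weak limit and preservation of the boundary datum) are fine, but they are not the content of the proposition. The entire difficulty is the step you defer: strong convergence of the gradients, i.e.\ $\|\nabla u_k^*\|_{L^2}\to\|\nabla u^*\|_{L^2}$. At that point your proposal becomes circular or misattributed: ``invoke the known continuity of Steiner symmetrization on $W^{1,2}$'' is precisely the statement being proved, and it is \emph{not} in \cite{K85} --- Kawohl's book supplies Proposition \ref{prop:sym} (the Polya--Szeg\H{o} inequality), not continuity. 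The continuity of Steiner symmetrization in $W^{1,p}$ is Burchard's theorem \cite{B97}, which is exactly what the paper cites, and it is genuinely hard: the analogous statement for radially decreasing rearrangement is \emph{false} \cite{AL89}, so no soft argument of the kind you sketch can close the gap.

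Your proposed self-contained route also fails on its own terms. The Dirichlet energy of $u^*$ is not a slice-wise quantity: the term $\int|\partial_y u^*|^2$ couples the rearrangements on different horizontal slices, so ``use the one-dimensional theory on each slice and apply dominated convergence in $y$'' does not even produce the right object, and this coupling is precisely where Almgren--Lieb-type discontinuity phenomena live. Moreover, you invoke the uniform Lipschitz bound of Lemma \ref{lem:lip} to dominate the integrand, but that lemma applies only to outer variation critical points of $E_\e$; Proposition \ref{prop:symcont} is needed for \emph{arbitrary} elements of $H$ (it is applied to the mountain-pass curves $p_k(t)$ in Lemma \ref{lem:mountainpassimproved}), which have no Lipschitz regularity whatsoever. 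Similarly, general $u\in H$ need not take values in $[0,1]$, so the appeal to Lemmas \ref{lem:bounded} and \ref{lem:FB} there is out of place (though truncation makes the $L^2$-contraction step salvageable). Had you simply cited \cite{B97} for the key step, your argument would coincide with the paper's; as written, the decisive step rests on a wrong reference, a circular appeal, and a sketch that cannot be repaired by the tools you name.
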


As $u \mapsto u^*$ is nonlinear, this is not a consequence of the Polya-Szeg\"o inequality and turns out to be extremely subtle (the corresponding result for radially decreasing rearrangement is false \cite{AL89}). The proof is found in \cite{B97}.

\begin{lemma}\label{lem:mountainpassimproved}
	The critical points in Lemma \ref{lem:mountainpass} can be taken to have $u_\e = u^*_\e$.
\end{lemma}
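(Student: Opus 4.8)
The plan is to rerun the mountain pass construction of Lemma \ref{lem:mountainpass}, but restricting attention to the \emph{symmetric} class
\[
	H^* = \{ u \in H : u = u^* \},
\]
and to exploit the symmetrization map $u \mapsto u^*$ to push an arbitrary competitor path down into this class without raising the energy. The key structural facts are already in hand: Proposition \ref{prop:sym} gives $E_\e[u^*] \le E_\e[u]$ (and the same for $E$), Proposition \ref{prop:symcont} gives that $u \mapsto u^*$ is continuous $H \to H$, and $U_-, U_\infty$ are themselves $x$-independent, hence already equal to their own Steiner rearrangements. So if $p \in P$ is any path from $U_-$ to $U_\infty$, the symmetrized path $p^*(t) := (p(t))^*$ is again continuous, still joins $U_- = U_-^*$ to $U_\infty = U_\infty^*$, lies entirely in $H^*$, and satisfies $\sup_t E_\e[p^*(t)] \le \sup_t E_\e[p(t)]$. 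Consequently the symmetric min-max value
\[
	G_\e^* := \inf_{p \in P^*} \sup_{t \in [0,1]} E_\e[p(t)], \qquad P^* = \{ p \in C([0,1]; H^*) : p(0) = U_-, p(1) = U_\infty\},
\]
satisfies $G_\e^* = G_\e$: the inequality $G_\e^* \ge G_\e$ is trivial since $P^* \subseteq P$, and $G_\e^* \le G_\e$ follows from symmetrizing an arbitrary near-optimal path.

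\textbf{Applying the mountain pass theorem on the symmetric subspace.} Next I would apply the mountain pass theorem inside $H^*$ rather than $H$. This requires checking that the hypotheses of Lemma \ref{lem:mountainpass} transfer to the restricted problem: the space $V^* = \{ v : U_- + v \in H^* \}$ (or its affine translate at $U_\infty$) is a closed subset invariant under the gradient flow of $E_\e$ by the principle of symmetric criticality, since $E_\e$ is invariant under the reflection $x \mapsto -x$ and, more to the point, the construction of critical points via the deformation lemma can be carried out equivariantly: one works with $x$-even functions throughout, the negative gradient flow preserves evenness, and Steiner monotonicity can be preserved along the flow by composing with the symmetrization map, which is continuous and energy-nonincreasing. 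The Palais-Smale condition of Lemma \ref{lem:PS} holds verbatim on the subspace (a PS sequence in $H^*$ is a PS sequence in $H$, whose strong limit is again symmetric since $u \mapsto u^*$ is continuous and idempotent). The mountain pass geometry is inherited from Lemma \ref{lem:stable} restricted to symmetric $u$, and one still has the strict energy gap $G_\e^* = G_\e > \max\{E_\e[U_-], E_\e[U_\infty]\} + \eta_2$. This produces an outer variation critical point $u_\e$ of $E_\e\big|_{H^*}$ with $E_\e[u_\e] = G_\e^* = G_\e$ and Morse index at most $1$ \emph{within} $H^*$.

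\textbf{From symmetric criticality to full criticality.} The remaining point is that a critical point of $E_\e$ restricted to $H^*$ is in fact a critical point of $E_\e$ on all of $H$. This is the principle of symmetric criticality (Palais): the energy $E_\e$ is invariant under the isometric $\Z/2$-action $u(x,y) \mapsto u(-x,y)$, and the fixed-point set of this action (the $x$-even functions) contains $H^*$; a critical point of $E_\e$ among $x$-even functions is automatically a full critical point because the orthogonal complement of the even functions in $\dot W^{1,2}$ is the odd functions, against which the first variation of an even critical point vanishes identically by symmetry. However, $H^*$ is not itself a linear subspace — it is only a convex cone cut out by the Steiner monotonicity constraints $u_x \le 0$ on $\{x \in (0,1/2)\}$ — so one cannot directly quote Palais. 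Instead I would argue that the critical point produced actually solves the Euler-Lagrange PDE $\Delta u = \b_\e(u)(A - By)_+$: by Lemma \ref{lem:bounded}-type arguments the constrained critical point is $C^2$ and $x$-even, and the monotonicity constraint is either inactive (so it is a free critical point locally) or, where active, the strong maximum principle applied to $u_x$ (which satisfies a linearized equation) forces $u_x \equiv 0$ there, i.e., $u$ is locally $x$-independent and again the constraint imposes no restriction on admissible variations. This is the step I expect to be the main obstacle: carefully verifying that the one-sided variations permitted by the convex constraint do not obstruct the Euler–Lagrange equation, equivalently that the Lagrange multiplier associated to the Steiner constraint vanishes. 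The cleanest route is probably to observe that the deformation in the mountain pass argument can be taken to act \emph{through} the symmetrization operator — i.e., one deforms in $H$ and then applies $(\cdot)^*$ — so that the resulting critical point, being a limit of symmetrizations, automatically satisfies the unconstrained criticality condition tested against all of $W^{1,2}_0(\T \times (0,T))$, with the monotonicity $u_\e = u_\e^*$ coming along for free from the continuity of symmetrization in Proposition \ref{prop:symcont}. The Morse index bound of at most $1$ likewise survives because the admissible two-dimensional competitor spaces in Lemma \ref{lem:mountainpass} may be chosen inside the even functions, and the symmetrized min-max sees only even variations.
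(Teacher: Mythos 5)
Your overall strategy (symmetrize paths, use Propositions \ref{prop:sym} and \ref{prop:symcont}, note $U_-,U_\infty$ are already symmetric, conclude the symmetric min-max level equals $G_\e$) starts from the right ingredients, but the route you then take --- running the mountain pass \emph{inside} the cone $H^*$ and upgrading constrained criticality to full criticality --- contains genuine gaps. First, as you yourself note, $H^*$ is not a fixed-point set of an isometric group action but a convex cone (even \emph{and} $u_x\le 0$ on $(0,1/2)$), so Palais' symmetric criticality is unavailable; your substitute claims do not close the gap. The assertion that the negative gradient flow ``preserves Steiner monotonicity by composing with the symmetrization map'' is not a construction of an admissible deformation: the symmetrization is only continuous (not Lipschitz or differentiable), and one would have to rebuild the deformation lemma for this modified flow. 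Worse, your argument that an active monotonicity constraint is harmless is circular: a critical point of $E_\e$ restricted to the cone a priori satisfies only a variational inequality, so you may not assume $u$ solves $\Delta u=\b_\e(u)(A-By)_+$ in order to apply the strong maximum principle to $u_x$ --- that PDE is exactly what is in question where the constraint binds. Finally, a min-max over the cone would at best yield an index bound with respect to one-sided, cone-preserving variations, which is not the unconstrained Morse index statement of Lemma \ref{lem:mountainpass} that is used later (via the explicit negative directions of Lemma \ref{lem:variationformulas}).

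The missing idea --- which your closing sentence gestures at but does not implement --- is to never leave the full space $H$ and instead use the \emph{localization} feature of the quantitative mountain pass theorem (Theorem 1 of \cite{FG92}, in the spirit of Ghoussoub--Preiss): this is what the paper does. Take paths $p_k\in P$ with $\max_t E_\e[p_k(t)]\to G_\e$ and symmetrize them; by Propositions \ref{prop:symcont} and \ref{prop:sym}, $p_k^*\in P$ and $\max_t E_\e[p_k^*(t)]\to G_\e$, so $(p_k^*)$ is itself a min-max sequence for the \emph{unconstrained} problem. The theorem then produces an unconstrained critical point $u_\e$ at level $G_\e$, with Morse index at most $1$ in the full space, and with $\|u_\e-v_k\|_H\to 0$ for points $v_k$ lying on the symmetrized paths. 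Since each $v_k$ is even and satisfies $(v_k)_x\le 0$ on $(0,1/2)$, and both properties are closed under $W^{1,2}_{\mathrm{loc}}$ convergence, the limit satisfies $u_\e=u_\e^*$. This bypasses entirely the constrained variational problem, the cone-invariant deformation, and the Lagrange-multiplier issue that your write-up correctly identifies as the main obstacle but does not resolve.
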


\begin{proof}
	Take $p_k \in P$ a sequence such that $\max_{t \in [0, 1]} p_k(t) \rightarrow G_\e$. Then let $p_k^*(t) := \left(p_k(t)\right)^*$ be the Steiner-symmetrized curve. By Proposition \ref{prop:symcont}, $p_k^* \in P$ (i.e. it is continuous in $t$), and by Proposition \ref{prop:sym}
	\[
		G_\e \leq \max_{t \in [0, 1]} p_k^*(t) \leq \max_{t \in [0, 1]} p_k(t) \rightarrow G_\e.
	\]
	Then the mountain pass theorem can be applied to this min-max sequence to obtain a critical point $u_\e$ with $\|u_\e - v_k\|_H \rightarrow 0$, where $v_k \in p_k([0,1])$ (see Theorem 1 in \cite{FG92}). The property of having $u_\e^* = u_\e$ is equivalent to being even, $u_\e(x, y) = u_\e(-x, y)$, and symmetrically decreasing, $(u_\e)_x(x, y) \leq 0$ for $x \in (0, 1/2)$. Both these properties are clearly preserved by convergence in $W^{1,2}_{\text{loc}}$.
\end{proof}

\begin{lemma}[Existence of non-flat symmetrically decreasing limit]\label{lem:symsol}
	Assume that $B < 2 (\frac{A}{3})^{3/2}$ and also $2 (\frac{A}{B} - Y_+ ) 2 \pi \coth(2 \pi Y_+) < 1$. Then there exists a domain variation critical point $u$ of $E$ other than $U_+, U_-, U_\infty$ (and in particular, it is not independent of $x$),
	and $u=u^*$.
	In particular, the critical point $u$ satisfies $u(x, y) = u(-x, y)$ and $u_x(x, y) \leq 0$ for a.e. $x \in (0, 1/2)$.
	The free boundary $\partial \{ u>0\}$ is the graph of a function of $y$, that is,
	$\partial \{ u>0\} = \{ (f(y),y): y \in S\}$, where $S$ is a closed subset of $[0,A/B]$.	 
\end{lemma}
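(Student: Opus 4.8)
The plan is to re-run the argument of the preceding lemma, but feeding in the Steiner-symmetrized mountain-pass critical points from Lemma~\ref{lem:mountainpassimproved}, and then to read off the geometry forced by the symmetry.

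First I would take the critical points $u_\e$ of $E_\e$ supplied by Lemma~\ref{lem:mountainpassimproved}; they carry all the features used in the previous lemma ($E_\e[u_\e]=G_\e$ with the energy gap above $\max\{E_\e[U_-],E_\e[U_\infty]\}$, and Morse index at most $1$), and in addition $u_\e=u_\e^*$, i.e.\ $u_\e(x,y)=u_\e(-x,y)$ and $(u_\e)_x\le0$ on $(0,1/2)$. By Lemma~\ref{lem:lip} they are uniformly Lipschitz, so along a subsequence $u_{\e_k}\to u$ locally uniformly. Exactly as before one verifies $\nabla u_{\e_k}\to\nabla u$ strongly in $\dot{W}^{1,2}$, invokes Remark~\ref{r:compactness} to get $2\cB_{\e_k}(u_{\e_k})(A-By)_+\to\chi_{\{u>0\}}(A-By)_+$ in $L^1$ (using that $u>0$ just below any small height), and passes to the limit in the inner-variation identity, so $u$ is a domain variation critical point of $E$. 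The energy gap excludes $u=U_-,U_\infty$; $u=U_+$ is excluded because Lemma~\ref{lem:variationformulas}(4) (whose hypothesis is exactly the one assumed here) would give the $u_{\e_k}$ Morse index at least $2$. So $u$ is not flat, and since evenness in $x$ and $(\cdot)_x\le0$ on $(0,1/2)$ are preserved under $W^{1,2}_{\mathrm{loc}}$-limits, $u=u^*$; the symmetry and monotonicity assertions are then immediate.

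Next I would extract the free-boundary structure. For each fixed $y$, $x\mapsto u(x,y)$ is even, continuous, and non-increasing in $|x|$, so $\{x:u(x,y)>0\}$ is an open interval symmetric about $0$, say $(-f(y),f(y))$ with $f(y)=\tfrac12\mathcal H^1(\{x:u(x,y)>0\})\in[0,1/2]$; thus $\{u>0\}=\{(x,y):|x|<f(y)\}$ and $f$ is lower semicontinuous since $\{u>0\}$ is open. By the corollary to Lemma~\ref{lem:bern}, $u\equiv0$ on $\{y\ge A/B\}$, so $f\equiv0$ there, and $u(\cdot,0)=1$ forces $f\equiv\tfrac12$ on a strip $\T\times[0,\kappa)$. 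Hence $\partial\{u>0\}$ is a closed subset of the compact set $\T\times[\kappa,A/B]$, so it is compact and its projection $S\subset(0,A/B]$ onto the $y$-axis is closed. When $0<f(y)<1/2$ the point $(f(y),y)$ lies in $\partial\{u>0\}$ (it is a limit of points of $\{u>0\}$ within the slice at height $y$, while $u(f(y),y)=0$).

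The main obstacle is to rule out ``overhangs'': to show that $\partial\{u>0\}$ is exactly the graph $\{(f(y),y):y\in S\}$ (with its mirror image), equivalently that $f$ is continuous. If not, $f$ fails upper semicontinuity at some $y_0$, and then $\partial\{u>0\}$ contains a horizontal segment $[a,b]\times\{y_0\}$ near which $\{u>0\}$ is locally the half-plane $\{y<y_0\}$; there $u$ is positive and harmonic with $u=0$ on $\{y=y_0\}$, so the smooth-point free-boundary condition (Lemma~\ref{lem:FB}) gives $|\nabla u|\equiv\sqrt{(A-By_0)_+}$ on the segment. Together with the vanishing tangential derivative this pins down the full Cauchy data, so unique continuation for the Laplacian forces $u=\sqrt{(A-By_0)_+}\,(y_0-y)$ near the segment, hence on the whole connected component $C$ of $\{u>0\}$ containing that neighborhood; since this is positive only for $y<y_0$ and vanishes on $\partial\{u>0\}$, $\partial C$ avoids $\T\times(0,y_0)$, so $C$ is clopen in $\T\times(0,y_0)$ and therefore equals it. Matching $u(\cdot,0)=1$ then gives $y_0^2(A-By_0)=1$, i.e.\ $y_0\in\{Y_-,Y_+\}$ and $u=(1-y/y_0)_+$ on $\T\times[0,y_0]$; finally any component of $\{u>0\}$ meeting $\{y>y_0\}$ is necessarily contained there (as $u\equiv0$ on $\T\times\{y_0\}$), has compact closure in $\T\times[y_0,A/B]$ with $u\equiv0$ on its boundary, hence is empty by the maximum principle, so $u=U_{y_0}\in\{U_-,U_+\}$ globally, contradicting non-flatness. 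This rules out overhangs and finishes the proof.
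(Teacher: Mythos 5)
Your construction of the limit is exactly the paper's argument: you take the Steiner-symmetrized mountain-pass critical points of Lemma~\ref{lem:mountainpassimproved}, use the uniform Lipschitz bound of Lemma~\ref{lem:lip} to extract a locally uniform limit, recover strong $\dot W^{1,2}$ convergence from harmonicity on the positivity set, pass the inner variation to the limit via Remark~\ref{r:compactness}, exclude $U_-,U_\infty$ by the energy gap and $U_+$ by the Morse index comparison of Lemma~\ref{lem:variationformulas}(4), and note that evenness and $u_x\le 0$ on $(0,1/2)$ survive the limit, so $u=u^*$. Up to that point the proposal is correct and coincides with the paper, which in its own proof of this lemma only writes out the convergence step and otherwise refers back to the previous lemma, reading the graph statement directly off $u=u^*$ (the slice $\{u(\cdot,y)>0\}$ is a symmetric interval $(-f(y),f(y))$, $u\equiv 0$ for $y\ge A/B$ by the corollary to Lemma~\ref{lem:bern}, and $S$ is the closed projection of the compact free boundary).

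Where you go beyond the paper is the attempt to exclude ``overhangs,'' i.e.\ horizontal segments of $\partial\{u>0\}$ coming from jumps of $f$, and that extra argument has a genuine gap. Failure of upper semicontinuity of $f$ at $y_0$ only provides a sequence $y_k\to y_0$ with $f(y_k)\to L>f(y_0)$; it does \emph{not} give that $\{u>0\}$ fills a one-sided neighborhood $\{y<y_0\}$ of the segment $[f(y_0),L]\times\{y_0\}$. The function $f$ may oscillate below $y_0$ (so the fluid region near the segment is a union of thin fingers rather than a half-plane), and lower semicontinuity does not prevent fluid lying above the segment as well, so the local picture can even be two-sided. Consequently the appeal to Lemma~\ref{lem:FB} is not justified: that lemma is only stated at points where $\{u>0\}$ is locally a smooth one-sided domain, which is precisely what is unknown at such segment points, and without the constant Cauchy data the reflection/unique-continuation step and the subsequent propagation to the whole component (forcing $u=U_\pm$) never get started. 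The rest of that chain would be fine \emph{if} the local half-plane structure were established, but as written the key step is asserted rather than proved; to make it rigorous one would need an additional argument (e.g.\ blow-up/nondegeneracy input of the type developed in Section~\ref{s:regularity}, or a separate monotonicity argument) rather than upper-semicontinuity failure alone.
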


\begin{proof}
	Let $u_\e$ be the outer variation critical points from Lemma \ref{lem:mountainpassimproved}. By Lemma \ref{lem:lip}, they satisfy $|\nabla u_\e|\leq C$, with $C$ independent of $\e$. We may then extract a sequence $u_{\e_k}$ converging locally uniformly to a $u \in H$ as $k\to\infty$. Moreover, $u = u^*$. It is easy to see that $u$ is harmonic when positive, which implies that $u_{\e_k} \rightarrow u$ strongly in $\dot{W}^{1,2}(\T \times [0, \infty))$ as $k\to\infty$: for any smooth nonnegative test function $\eta \in C_c^\infty(\T \times (0, \infty))$,
	\[
		\int |\nabla u_\e|^2 \eta = - \int \left(u_\e \eta \Delta u_\e + u_\e \nabla u_\e \cdot \eta \right)= - \int \left(u_\e \eta \b_\e(u_\e) + u_\e \nabla u_\e \cdot \nabla \eta\right).
	\]
	The first term is negative, so
	\[
		\limsup_{k\to\infty} \int |\nabla u_{\e_k}|^2 \eta \geq \limsup_{k\to\infty} \left(- \int u_{\e_k} \nabla u_{\e_k} \cdot \nabla \eta \right)= - \int u \nabla u \cdot \nabla \eta = \int |\nabla u|^2 \eta.
	\]
	This implies strong convergence of $\nabla u_{\e_k} \rightarrow \nabla u$ as $k\to\infty$.

\end{proof}

\section{Regularity of the water surface}\label{s:regularity}

In this section we prove the remaining portions of Theorem \ref{main2}, having to do with regularity of the free boundary.

\begin{definition}
Let us distinguish the water surface
$\S := \{ (f(y),y): y \in I\}$, where $I$ is the first/leftmost connected component of $S$
and the air bubble boundary
 $\B := \{ (f(y),y): y \in S\setminus I\}$.
\end{definition}

Note that as $u(x,0)=1$ and $u(x,A/B)=0$, each component of $\{ u> 0\}$ must be connected to the bottom by the maximum principle
applied to the subharmonic function $u$. So the set $\{ u> 0\}$ is connected, and the function $f$ must be positive at each point of definition. Moreover $\S$ has to be connected to $x=1/2$ as it can be the above discussion not have more than $1$ intersection
point with $x=0$ and it cannot be connected to the bottom. In other words, $\text{Im}(f|I) = [0,1/2]$.
 
\begin{lemma}[Non-degeneracy on $\S\setminus (0,A/B)$]\label{lem:ndeg}
	Let $u$ be a symmetrically decreasing solution of Lemma \ref{lem:symsol}
	and let $(x_0,y_0)\in \S\setminus (0,A/B)$. Then $u$ is non-degenerate at
	 $(x_0,y_0)$, that is,
	 $$ 	\liminf_{r\to 0} r^{-4} \int_{B_r(x_0,y_0)} u^2 > 0.$$
\end{lemma}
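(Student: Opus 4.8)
The plan is to argue by contradiction using a blow-up at $(x_0,y_0)$ together with the fact that $u=u^*$ has a free boundary which is a graph in $y$. Suppose $\liminf_{r\to 0} r^{-4}\int_{B_r(x_0,y_0)} u^2 = 0$. Since $u$ is Lipschitz (by Lemma \ref{lem:lip} passed to the limit) and harmonic in $\{u>0\}$, consider the rescalings $u_r(z) := u((x_0,y_0)+rz)/r$, which are uniformly Lipschitz and harmonic where positive. Along a suitable sequence $r_j\to 0$, $u_{r_j}\to u_0$ locally uniformly, where $u_0$ is a non-negative global Lipschitz function, harmonic on $\{u_0>0\}$, $u_0(0)=0$, and with $\int_{B_1} u_0^2 = \lim_j r_j^{-4}\int_{B_{r_j}(x_0,y_0)} u^2 = 0$ (after passing to the subsequence realizing the liminf), so in fact $u_0\equiv 0$ on $B_1$, hence (by harmonicity/unique continuation on each component, or just by the global Lipschitz bound and a covering argument) $u_0\equiv 0$ on all of the blow-up domain. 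The goal is to show this forces $u$ itself to vanish on a neighborhood relative to $\{y>0\}$ of something it cannot, contradicting $(x_0,y_0)\in\S$ and the structure of $\{u>0\}$.

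The key geometric input is that the free boundary is the graph $\{(f(y),y)\}$ and $\S$ is its leftmost branch with $\mathrm{Im}(f|_I)=[0,1/2]$. Because $(x_0,y_0)\notin (0,A/B)$ — i.e. it is not the (possible) cusp tip at $x=\pm 1/2$, nor the bottom, where the boundary data forces $u=1$ — the set $\{u>0\}$ occupies, near $(x_0,y_0)$, at least a fixed fraction of some ball: to the left of the graph (toward $x=0$) the horizontal slice $\{x : (x,y)\in\{u>0\}\}$ is an interval containing $0$ by symmetric-decreasing monotonicity, so $\{u>0\}\cap B_r(x_0,y_0)$ has density bounded below away from $0$ uniformly in small $r$. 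First I would use this density lower bound together with the non-degeneracy statement of Lemma \ref{lem:bern} / the strong Bernstein inequality to set up the blow-up; then I would invoke non-degeneracy of the limit critical point $u$ of $E$ as an inner variation critical point — the standard fact that inner variation (hence the free boundary condition $|\nabla u|^2=(A-By)_+$, which is strictly positive for $y<A/B$, from Lemma \ref{lem:FB}) precludes $u$ from vanishing to infinite order at boundary points where the weight $(A-By)_+$ is positive. Concretely, if $r^{-4}\int_{B_r} u^2\to 0$, then the blow-up $u_0$ is a \emph{non-trivial} global solution only if we rescale at the right rate; $u$ Lipschitz means $r^{-2}\int_{B_r}u^2$ is bounded, and the $r^{-4}$ normalization vanishing says the energy is concentrating at a rate faster than Lipschitz, i.e. $u(z)=o(|z-(x_0,y_0)|)$, so $\nabla u((x_0,y_0))=0$ in a suitable averaged sense — contradicting that on the smooth part of $\S$ the free boundary condition gives $|\nabla u|^2=(A-By_0)>0$.

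More carefully, the argument I would write is: the density estimate on $\{u>0\}$ plus harmonicity gives, via a Caccioppoli/Poincaré argument on half-balls, the Almgren-type lower bound $r^{-4}\int_{B_r(x_0,y_0)}u^2 \geq c\, r^{-2}\,\sup_{B_{r/2}(x_0,y_0)}u^2$ up to constants (because $u$ vanishes on a set of positive density, a one-sided Poincaré inequality controls the $L^2$ norm by the gradient, and the gradient is controlled by $\sup u/r$ by harmonic estimates on the positivity set where $u$ is harmonic away from the bottom — note $(x_0,y_0)$ is interior in $y$ since $f>0$). If $r^{-4}\int_{B_r}u^2\to 0$, then $r^{-1}\sup_{B_{r/2}}u\to 0$, i.e. $u$ vanishes to more than first order at $(x_0,y_0)$, whence $|\nabla u(x_0,y_0)|=0$ if the limit is classical there, contradicting the free boundary condition. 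To make the last step rigorous without assuming smoothness of $\S$ at $(x_0,y_0)$ in advance, I would instead pass this super-linear decay into a blow-up to get a non-negative harmonic (on its positivity set) function $u_0$ on a half-space-like domain with $u_0(0)=0$ and $u_0(z)=o(|z|)$, hence $u_0\equiv 0$, and then feed $u_0\equiv 0$ back — combined with the density lower bound on $\{u_{r_j}>0\}$ which survives to the limit and says $\{u_0>0\}$ has positive density at $0$ — to reach a contradiction, since a function that is identically $0$ cannot have a positivity set of positive density. Alternatively and more cleanly: use that $u$ is an inner variation critical point, so monotonicity of the Weiss-type / Almgren frequency functional (guaranteed by Remark \ref{r:compactness} and \cite{vw} Section 7 for the weighted functional) holds; if $r^{-4}\int_{B_r}u^2\to 0$ the frequency at $(x_0,y_0)$ is $\geq$ something forcing $u$ to vanish identically near $(x_0,y_0)$ on the component of $\{u>0\}$, hence everywhere on that component by unique continuation, contradicting $u(x,0)=1$.

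\textbf{Main obstacle.} The delicate point is handling the possibility that $(x_0,y_0)$ is not a ``nice'' free boundary point — there is no a priori regularity of $\S$ at $(x_0,y_0)$, so one cannot simply quote the free boundary condition $|\nabla u|^2=(A-By_0)$ pointwise. The technical heart is therefore the one-sided density/Poincaré estimate: one must show $\{u>0\}$ occupies a fixed fraction of every small ball around $(x_0,y_0)$ \emph{uniformly}, using only that the free boundary is a graph in $y$ and that $u$ is symmetric-decreasing (so the positivity slices are centered intervals), and that $(x_0,y_0)\notin\{y=0\}\cup\{y=A/B\}$. Once that density bound is in hand, the Caccioppoli-Poincaré argument giving $r^{-4}\int_{B_r}u^2\gtrsim r^{-2}\sup_{B_{r/2}}u^2$ is standard, and the remaining step — ruling out super-linear vanishing at a point where the weight is positive — follows from the inner-variation structure and the frequency formula of Remark \ref{r:compactness}. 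I expect most of the writing effort to go into the density estimate and into carefully extracting a non-trivial blow-up limit (choosing the correct rescaling rate, which the $r^{-4}\int u^2$ normalization pins down as the Lipschitz/$1$-homogeneous rate).
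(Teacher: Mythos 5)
Your overall strategy (blow-up by contradiction) starts the same way as the paper's, but the two ways you propose to close the argument both have genuine gaps, and the ingredient that actually closes it in the paper is missing from your proposal. First, the step ``the density lower bound on $\{u_{r_j}>0\}$ survives to the limit and says $\{u_0>0\}$ has positive density at $0$'' is false: locally uniform convergence $u_{r_j}\to 0$ is perfectly compatible with $\{u_{r_j}>0\}$ having density bounded below (take $u_{r_j}=\epsilon_j\max(z\cdot e,0)$ with $\epsilon_j\to 0$; the positivity sets are a fixed half-plane while the limit is identically zero, so $\{u_0>0\}=\emptyset$). Positivity sets are not lower semicontinuous in this sense, and the scenario in which $u$ is positive on a set of positive density but uniformly $o(r)$ is exactly the degenerate behavior the lemma must exclude, so this route is circular. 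Second, your frequency/unique-continuation alternative does not go through either: super-linear vanishing of $u$ at a point of $\partial\{u>0\}$ does not force $u\equiv 0$ in a neighborhood, because $u$ is not harmonic across the free boundary and unique continuation is an interior statement that cannot be launched from a boundary point of the positivity set; cusp-type points (and the stagnation point $(0,A/B)$, which the lemma explicitly excludes) are precisely configurations where $u$ vanishes faster than linearly without vanishing identically nearby. Your Caccioppoli/sub-mean-value observation ($r^{-2}\sup_{B_{r/2}}u^2\lesssim r^{-4}\int_{B_r}u^2$ for the nonnegative subharmonic $u$) is correct but is not where the difficulty lies.

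The missing ingredient is the measure-theoretic identity supplied by Remark \ref{r:compactness} (the compactness theorem of \cite{kw} adapted to $E$): the limit $u$ is a variational solution for which $\Delta u$ is the surface measure $\sqrt{(A-By)_+}\,\mathcal{H}^1\mres\partial\{u>0\}$. This is what the paper uses: testing against a cutoff $\eta$ with $\eta(x_0,y_0)>0$, the assumed vanishing $\int_{B_1}u_{r_k}^2\to 0$ forces $\int_{B_1}u_{r_k}\Delta\eta=\int_{B_1}\eta\,\sqrt{A-B(y_0+r_k y)}\,d\mathcal{H}^1\mres\S_{r_k}\to 0$; since $A-By_0>0$, this says the rescaled free boundary carries vanishing $\mathcal{H}^1$ mass near the origin, which contradicts the graph-over-$y$ property of $\S$ through $(x_0,y_0)$ (the graph contributes length comparable to $1$ at unit scale after rescaling). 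So the non-degeneracy comes from a quantitative lower bound on $\Delta u_r$ as a measure, not from classifying or analyzing the blow-up limit $u_0$; without invoking the variational-solution structure of $\Delta u$, neither of your closing steps can be repaired.
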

\begin{proof}
Suppose towards a contradiction that, setting $u_r(x,y) := u(x_0+rx,y_0+ry)/r$,
$$ 0= \liminf_{r\to 0} r^{-4} \int_{B_r(x_0,y_0)} u^2 = \liminf_{r\to 0}\int_{B_1(0)} u_r^2.$$
Then by Remark \ref{r:compactness}, there is a sequence $r_k\to 0$ such that for each smooth cut-off function $\eta \geq 0$
satisfying $\eta(x_0,y_0) > 0$,
$$ 0 =  \lim_{k\to\infty} \int_{B_1(0)}u_r \Delta \eta = \lim_{k\to\infty} \int_{B_1(0)}\eta \Delta u_r
= \lim_{k\to\infty} \int_{B_1(0)} \eta \sqrt{A-B (y_0+r_ky)} \h1 \lfloor \S_{r_k},$$
where $\S_r := \{(x_0+rx,y_0+ry) : (x,y)\in \S\} $.
Since $A-B y_0>0$, we obtain
$$ 0 = \lim_{k\to\infty} \int_{B_1(0)} \eta \h1 \lfloor \S_{r_k}.$$
On the other hand, by the graph property of $\S$,
$$ \lim_{k\to\infty} \int_{B_1(0)} \eta \h1 \lfloor \S_{r_k} \geq \eta(x_0,y_0),$$
a contradiction.
\end{proof}

\begin{lemma}[Blow-up limits]\label{lem:blowup}
Let $u$ be a symmetrically decreasing solution of Lemma \ref{lem:symsol}
	let $(x_0,y_0)\in \S\setminus (0,A/B)$
	and suppose that
	$u_r(x,y) := u(x_0+rx,y_0+ry)/r \to u_0$ weakly in $W^{1,2}_{loc}(\R^2)$.
	Then the following holds:
	\begin{enumerate}
\item If $0< x_0 < 1/2$, then either every blow-up limit at $(x_0,y_0)$ is of the form 
$u_0(x,y) = (A-By_0)\max((x,y)\cdot e,0)$, where $e$ is a unit vector satisfying $e_1 \leq 0$,
	or every blow-up limit at $(x_0,y_0)$ is of the form $u_0(x,y) = \theta |y|$ with $\theta \in (0,A-By_0]$.
	In the latter case only cusps pointing towards $x=0$ are allowed.
\item If $x_0=0$, then $u_0(x,y) = (A-By_0)\max(-y,0)$, and this is the unique blow-up limit.
\item If $x_0=1/2$, then either the unique blow-up limit is
	$u_0(x,y) = (A-By_0)\max(-y,0)$, or each blow-up limit is of the form $u_0(x,y) = \theta |x|$ with $\theta \in (0,A-By_0]$.	
\end{enumerate}
\end{lemma}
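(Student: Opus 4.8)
The plan is to classify blow-up limits using the standard machinery for Bernoulli free boundaries, adapted to the three geometric situations. In all cases, the key inputs are: (i) the uniform Lipschitz bound of Lemma \ref{lem:lip}, which gives that $u_r$ is uniformly Lipschitz and hence $u_0$ exists (along subsequences) and is Lipschitz; (ii) the compactness theorem invoked in Remark \ref{r:compactness}, which ensures that $2\cB_{\e}(u_\e)(A-By)_+ \to \chi_{\{u>0\}}(A-By)_+$ and, under rescaling, that $u_r$ converges strongly in $W^{1,2}_{loc}$ with the free boundary measures also converging; (iii) the non-degeneracy of Lemma \ref{lem:ndeg}, which prevents $u_0 \equiv 0$; and (iv) the symmetry and monotonicity $u=u^*$, which forces the free boundary to be a graph $x=f(y)$ and controls the direction in which $\{u>0\}$ can lie. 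First I would establish that any blow-up limit $u_0$ is a nonnegative, globally Lipschitz, harmonic function on $\{u_0>0\}$ that is an inner variation critical point of the \emph{constant-coefficient} energy with weight $(A-By_0)$ (the $y$-dependence of the weight disappears in the limit since $r\to 0$), so the free boundary condition reads $|\nabla u_0|^2 = A-By_0$ on $\partial\{u_0>0\}$ at smooth points. I would also record that the rescaled domains $\{u_r>0\}$ are nested in the half-cylinder appropriately: for $0<x_0<1/2$ the rescaling fills out all of $\R^2$; for $x_0=0$ or $x_0=1/2$ the rescaled domain is a half-plane $\{\pm x < c_k\}$ with $c_k \to +\infty$, so $u_0$ lives on all of $\R^2$ but inherits the even symmetry (at $x_0=0$) resp. the reflection symmetry across $\{x=0\}$ after recentering (at $x_0=1/2$).

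For part (2), $x_0=0$: here the graph property of $\S$ together with $u=u^*$ forces, near $(0,y_0)$, that $\{u>0\}$ is exactly $\{y<f(y)\text{-side}\}$ locally $\ldots$ more precisely $u$ is even in $x$ and symmetrically decreasing, so in a neighborhood of $(0,y_0)$ the positivity set is a half-plane-like region with boundary a horizontal graph through $(0,y_0)$; the blow-up is then forced to be one-dimensional in $y$. The homogeneity (degree-one) of $u_0$, harmonicity on $\{u_0>0\}$, non-degeneracy, and the free boundary condition pin down $u_0(x,y) = \theta\max(-y,0)$ with $\theta^2 = A-By_0$, and evenness in $x$ rules out any tilted half-plane, giving uniqueness $\theta = \sqrt{A-By_0}$; one should double check against the convention in the statement that this is written $(A-By_0)\max(-y,0)$, which suggests the intended normalization has the free boundary condition $|\nabla u_0| = A-By_0$ (not its square) — I would follow whichever normalization is consistent with Lemma \ref{lem:FB} and the rescaling $u_r = u(\cdot)/r$, adjusting the statement of the blow-up accordingly. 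For part (3), $x_0=1/2$: by the reflection symmetry of $u^*$ across $\{x=1/2\}$, the blow-up $u_0$ is even in $x$ about $0$. Either $\{u_0>0\}$ is a half-plane $\{y<0\}$ — giving the horizontal-graph case $u_0 = \sqrt{A-By_0}\,\max(-y,0)$, unique by the argument above — or $\{u_0>0\}$ is not such a half-plane, in which case the evenness in $x$ and the classification of global degree-one Bernoulli solutions in the plane forces $\{u_0=0\}$ to be the slit $\{x=0,\ y\ge 0\}$ or similar, with $u_0 = \theta|x|$ near the ray; here $\theta \le \sqrt{A-By_0}$ comes from the fact that $u_0$ is subsolution-like / from the strong Bernstein bound of Lemma \ref{lem:bern} passed to the blow-up.

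For part (1), $0<x_0<1/2$: here the rescaling sees all of $\R^2$, so $u_0$ is a global degree-one minimizer-or-critical-point of the one-phase Bernoulli problem with constant weight. I would split on whether $\{u_0>0\}$ has nonempty "thin" part. Using the graph structure $x=f(y)$ of $\partial\{u>0\}$, the set $\{u_0>0\}$ is (the blow-up of) a region lying on one side of a graph, so it is either a half-plane (giving $u_0 = (A-By_0)\max((x,y)\cdot e,0)$ for a unit $e$, and the monotonicity $u_x\le 0$ for $x\in(0,1/2)$ forces $e_1\le 0$), or the positivity set degenerates to a lower-dimensional object, in which case the non-degeneracy still forces $u_0\not\equiv 0$ and the only possibility compatible with $u_0$ being harmonic off a graph and Lipschitz is a "double" configuration $u_0 = \theta|y|$ (fluid on both sides of a horizontal line $\{y=0\}$, i.e. a cusp of $\S$), with $\theta\in(0,A-By_0]$ again by the Bernstein bound; the cusp can only point towards $x=0$ because of monotonicity $u_x\le 0$. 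The main obstacle — the step I expect to require genuine care — is ruling out mixed or pathological blow-ups and proving the dichotomy is \emph{uniform along the whole sequence} (i.e. "either \emph{every} blow-up is a half-plane or \emph{every} blow-up is $\theta|y|$"), rather than allowing different subsequences to produce different types. For this I would use an epiperimetric-inequality or monotonicity-formula argument (a Weiss-type density, whose monotonicity is exactly the frequency formula referenced in Remark \ref{r:compactness}) to show the density at $(x_0,y_0)$ is well-defined, and that its value distinguishes the two cases; continuity/rigidity of the density then forces the blow-up type to be the same along every sequence $r_k\to 0$.
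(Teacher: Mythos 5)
Your general framework (uniform Lipschitz bounds, the variational-solution/compactness input of Remark \ref{r:compactness}, non-degeneracy from Lemma \ref{lem:ndeg}, and the symmetry/graph structure) matches the paper's, and your proposal to use a Weiss-type density to show that all blow-ups at a fixed point have the same type is a reasonable alternative to the paper's continuity-in-$r$ argument excluding mixed asymptotics. However, there is a genuine gap in the case analysis: you never rule out degenerate horizontal limits $\theta|y|$ at $x_0=1/2$, nor degenerate vertical (cusp/pinching) configurations on the axis at $x_0=0$; in parts (2) and (3) you simply list the admissible limits. Symmetry and monotonicity alone do not exclude these: for instance, the air region pinching along a thin horizontal set through $(1/2,y_0)$ with water above and below is compatible with $u=u^*$, yet it must be excluded to prove part (3). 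The paper does this with a quantitative step for which your sketch has no substitute: a divergence-theorem computation on the region $D$ enclosed between the free-boundary graph and a straight chord, where the inward normal derivative on the free-boundary part equals $-\sqrt{A-By}$ (in the variational sense, via Remark \ref{r:compactness}) while on the chord $|\nabla u\cdot\nu|\le\sqrt{(A-By)_+}$ by the strong Bernstein inequality of Lemma \ref{lem:bern}; since in a cusp configuration the chord is shorter than half of the graph, this gives a contradiction, and combined with symmetric decrease it eliminates $\theta|y|$ at $x_0=1/2$ and upward cusps at $x_0=0$ (downward or double cusps at $x=0$ being excluded because $f>0$). Lemma \ref{lem:bern}, which is what makes this argument work, enters your proposal only to bound $\theta$, and your appeal to $u_x\le 0$ for the cusp direction in part (1), while pointing in the right direction, is not developed into an argument and does not cover the boundary cases $x_0\in\{0,1/2\}$.

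A secondary weakness is the classification input itself: you assert that blow-ups are one-homogeneous and then invoke ``the classification of global degree-one Bernoulli solutions,'' but you never explain how homogeneity is obtained. The paper gets the classification from \cite[Proposition 4.7 (i)]{vw}, verifying its perimeter hypothesis through Remark \ref{r:compactness} and \cite[Lemma 3.3]{kw}, together with Lemma \ref{lem:ndeg} and the graph property; if you wish to avoid that citation you would need the monotonicity/frequency formula for the weighted energy referenced in Remark \ref{r:compactness} to obtain homogeneity before any classification can be applied. (Your observation about the normalization of the constant is fair: with the free boundary condition of Lemma \ref{lem:FB}, the half-plane slope should be $\sqrt{A-By_0}$.)
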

\begin{proof}
Most assertions follow from \cite[Proposition 4.7 (i)]{vw} together with Lemma \ref{lem:ndeg}
as well as the graph assumption,
observing that the 
perimeter assumption in \cite[Proposition 4.7]{vw} is satisfied by every variational
solution (see Remark \ref{r:compactness} and \cite[Lemma 3.3]{kw}).

Mixed asymptotics, that is some blow-up sequence converging to a half-plane solution
and another converging to an "absolute-value-solution" is not possible as we
would obtain by a continuity argument an intermediate sequence of radii
such that the limit would be neither a half-plane solution nor an "absolute-value-solution",
which is not possible.

In the case of a cusp pointing away from $x=0$, we find $y_0-\delta <y_l < y_0 < y_r< y_0+\delta$
such that on the interval $(y_l,y_r)$, the graph of $f$ is above the straight line segment
connecting $(y_l,f(y_l))$ and $(y_r,f(y_r))$.
Let us call the enclosed region $D$.
Then by Remark \ref{r:compactness} and Lemma \ref{lem:bern}, 
\begin{align*} % requires amsmath; align* for no eq. number
0 & = \int_D \Delta u = \int_{\partial D \setminus \text{graph}(f)} \nabla u \cdot \nu \h1
+ \int_{\partial D \cap \text{graph}(f)} \nabla u \cdot \nu \h1\\
& \leq \int_{\partial D \setminus \text{graph}(f)}  \sqrt{A-By} \h1 -\int_{\partial D \cap \text{graph}(f)} \sqrt{A-By} \h1\\
& \leq o(\mathcal{H}^1(\partial D \cap \text{graph}(f))) + \sqrt{A-By_0}  \left(\int_{\partial D \setminus \text{graph}(f)}  \h1 -\int_{\partial D \cap \text{graph}(f)}  \h1\right),
\end{align*}
a contradiction as the straight line segment length is in this cusp case
$<  \mathcal{H}^1(\partial D \cap \text{graph}(f))/2$ provided that $\delta$ has been chosen small enough.
For this reason, combined with the assumption that $u$ is symmetrically decreasing,
at $x_0=1/2$, limits of the form $u_0(x,y) = \theta |y|$ are not possible either.
By the same reason, an upward pointing cusp of $\S$ at $x=0$ cannot happen.

Downward pointing/double cusps of $\S$ at $x=0$ are excluded by the fact that $f>0$.

Last, by the symmetry as well as periodicity of $u$, 
only horizontal and vertical asymptotics is allowed at $x=0$ and $x=1/2$.
\end{proof}

\begin{remark}
The $\theta$ in the previous lemma and thus the blow-up limit is unique
by Remark \ref{r:compactness} (similar to \cite[Lemma 6.4 (ii)]{vw}).
\end{remark}

\begin{corollary}[Flatness]\label{lem:flat}
Let $u$ be a symmetrically decreasing solution of Lemma \ref{lem:symsol},
	let $x_0<1/2$, $(x_0,y_0)\in \S\setminus (0,A/B)$
	and suppose that
	$u_r(x,y) := u(x_0+rx,y_0+ry)/r \to u_0$ weakly in $W^{1,2}_{loc}(\R^2)$.
	Then every blow-up limit is of the form 
$u_0(x,y) = (A-By_0)\max((x,y)\cdot e,0)$, where $e$ is a unit vector satisfying $e_1 \leq 0$.
\end{corollary}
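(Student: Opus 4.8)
The plan is to reduce everything to Lemma \ref{lem:blowup}. If $x_0=0$ the assertion is precisely part (2) of that lemma (with $e=(0,-1)$), so assume $0<x_0<1/2$. Part (1) of Lemma \ref{lem:blowup} then tells us that $u_0$ is either of the claimed half-plane form with $e_1\le 0$ --- in which case we are done --- or $u_0(x,y)=\theta|y|$ for some $\theta\in(0,A-By_0]$. The entire content of the corollary is therefore to exclude this second alternative, and this is the one place where the graph property and the symmetric monotonicity from Lemma \ref{lem:symsol} must be brought in.

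So suppose every blow-up limit of $u$ at $(x_0,y_0)$ has the form $\theta|y|$ with $\theta>0$. The first step is to record that then $f(y)\ge x_0$ for all $y$ in some punctured neighborhood of $y_0$: this is exactly the clause ``only cusps pointing towards $x=0$ are allowed'' in part (1) of Lemma \ref{lem:blowup}, but it can also be seen directly. Indeed, if $f(y_j)<x_0$ along some sequence $y_j\to y_0$, then $u(x_0,y_j)=0$, since $u(\cdot,y)$ is nonincreasing on $(0,1/2)$ and $\partial\{u>0\}$ is the graph of $f$; but then $u(x_0,y_j)/|y_j-y_0|=u_{r_j}(0,\pm1)\to\theta'>0$ along any convergent subsequence of the rescalings $u_{r_j}$ at scales $r_j=|y_j-y_0|$ (whose limit, again by part (1), is some $\theta'|y|$), a contradiction.

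The key step is next. Knowing $f(y)\ge x_0$ near $y_0$, any point $(x,y)$ of the slab $R_r:=\{x_0-2r<x<x_0,\ |y-y_0|<2r\}$ satisfies $x\in(0,1/2)$ and $x<x_0\le f(y)$, so by the monotonicity of $u(\cdot,y)$ together with the graph description of $\partial\{u>0\}$ we get $u(x,y)>0$; hence $u$ is harmonic on $R_r$, by the definition of inner variation critical point. Consequently the rescalings $u_r$ are positive, hence harmonic, on the \emph{fixed} open set $\Omega:=\{(x,y):-2<x<0,\ |y|<2\}$ for all sufficiently small $r$. The uniform Lipschitz bound of Lemma \ref{lem:lip} passes to $u$ and to its rescalings $u_r$, so (Arzel\`a--Ascoli, using also $u_r(0,0)=0$) the weak $W^{1,2}_{loc}$ convergence $u_r\to u_0$ is in fact local uniform convergence on $\Omega$; thus $u_0=\theta|y|$ is a local uniform limit on $\Omega$ of functions harmonic on $\Omega$, hence harmonic on $\Omega$. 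But $\Omega$ meets the line $\{y=0\}$ and $\theta>0$, so $u_0$ is not harmonic on $\Omega$ (there $\Delta u_0=2\theta\,\mathcal{H}^1\mres\{y=0\}\ne 0$). This contradiction removes the ``$\theta|y|$'' alternative and finishes the proof.

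I expect the third step to be the only real obstacle: turning the soft fact ``near $(x_0,y_0)$ the zero set is the single one-sided region $\{x\ge f(y)\}$, which lies in $\{x\ge x_0\}$'' into the hard fact ``$u$ is literally harmonic on an $O(r)$-sized slab inside $\{x<x_0\}$, uniformly in $r$'' is where the symmetric-decreasing structure (so that $\{u=0\}$ at each height is exactly the interval bounded on the left by $x=f(y)$, with no air to its left) and the graph property of $\partial\{u>0\}$ from Lemma \ref{lem:symsol} are genuinely used. Note that for a cusp of $\S$ pointing \emph{away} from $x=0$ the one-sided region would sit on the opposite side and this argument would produce no contradiction; such cusps are instead ruled out inside Lemma \ref{lem:blowup} itself by a flux computation, while the present argument is what disposes of the cusps pointing towards $x=0$. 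Everything else is the elementary principle that a local uniform limit of harmonic functions is harmonic, which clashes with the non-harmonicity of $\theta|y|$ when $\theta>0$.
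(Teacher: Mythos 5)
Your reduction to Lemma \ref{lem:blowup} is the same as the paper's, but the way you exclude the remaining alternative $u_0=\theta|y|$ (cusp pointing towards $x=0$) is genuinely different, and it is correct. The paper disposes of this case by noting that near such a cusp $u$ is harmonic outside a cone of arbitrarily small opening angle and then invoking Oddson's theorem, which forces $\sup_{B_r(x_0,y_0)}u\geq r^{\mu}$ for exponents $\mu<1$ close to $1/2$, contradicting the Lipschitz bound; your argument instead exploits the side of the cusp filled with water: since $f(y)\geq x_0$ for nearby heights, $u$ is positive, hence harmonic, on the slab $\{x_0-2r<x<x_0,\ |y-y_0|<2r\}$, so the rescalings $u_r$ are harmonic on a fixed neighborhood of a segment of $\{y=0\}$, harmonicity passes to the locally uniform limit (uniform Lipschitz bound from Lemma \ref{lem:lip}), and this contradicts the fact that $\theta|y|$ with $\theta>0$ has nonzero distributional Laplacian on $\{y=0\}$. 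What each route buys: yours is elementary and self-contained (it even re-derives, in your step 2, the exclusion of cusps pointing away from $x=0$, which the paper obtains by a flux computation inside Lemma \ref{lem:blowup}), and it avoids importing Oddson's growth estimate; the paper's route is shorter and needs only that the zero set near the cusp lies in a thin cone, without the slab-positivity analysis. The one step you should write out fully is positivity of $u$ on the slab at the height $y=y_0$ itself: monotonicity of $u(\cdot,y_0)$ only gives $u\geq 0$ to the left of $x_0$, and strict positivity there requires the graph property of $\partial\{u>0\}$ from Lemma \ref{lem:symsol} to exclude a second free boundary point $(g,y_0)$ with $g<x_0$ (and the degenerate case of an entirely zero slice, which would make $(x_0,y_0)$ interior to $\{u=0\}$); you flag exactly this as the delicate point, and the sketched reasoning does close it, so the proposal stands as a valid alternative proof.
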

\begin{proof}
In case of a cusps pointing towards $x=0$, 
$u$ is harmonic outside a cone of arbitrarily small angle, 
so we can apply Oddson's theorem \cite{oddson}
to obtain that 
$$ \sup_{B_r(x_0,y_0)} u \geq r^\mu \text{ for each } \mu > 1/2 \text{ and all }r < r_0 < 1,$$
contradicting the Lipschitz regularity of $u$.
\end{proof}

\begin{corollary}[Regularity]\label{lem:regular}
Let $u$ be a symmetrically decreasing solution of Lemma \ref{lem:symsol}.
Then $\S \setminus \left((0,A/B) \cup \{ |x| = 1/2 \}\right)$ is locally the graph of an analytic function.
Moreover, either $\S \setminus (0,A/B)$ is locally the graph of an analytic function,
or there is a downward-pointing cusp of $\S$ at $|x|=1/2$ at which non-$\S$ free boundary
points must exist that converge to the cusp point.
\end{corollary}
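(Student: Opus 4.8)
The plan is to reduce the statement to the blow-up classification already obtained in Lemma \ref{lem:blowup} and Corollary \ref{lem:flat}, feed the resulting ``flat'' free boundary points into the standard regularity theory for one-phase Bernoulli problems, and handle the points on $\{|x|=1/2\}$ with one extra argument. The step I expect to be most delicate is exactly this last one, together with checking that the classical flatness-improvement and hodograph machinery really does apply to the (weighted, merely variational) solutions produced in Lemma \ref{lem:symsol} rather than just to energy minimizers.

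First I would treat a point $(x_0,y_0)\in\S\setminus\left((0,A/B)\cup\{|x|=1/2\}\right)$. Since $f>0$ such a point has $0<x_0<1/2$, and since $\partial\{u>0\}$ is a closed graph over $y$ the graph function $f$ is continuous at $y_0$; by Lemma \ref{lem:ndeg} one also has $y_0<A/B$, so the weight $A-By$ is real-analytic and strictly positive near $(x_0,y_0)$. Corollary \ref{lem:flat} says every blow-up of $u$ at $(x_0,y_0)$ is the half-plane solution $(A-By_0)\max((x,y)\cdot e,0)$ with $e_1\le 0$, and continuity of $f$ forces $e_1\ne 0$ (a horizontal blow-up line is not a graph over $y$). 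Thus $(x_0,y_0)$ is a flat (regular) free boundary point, and I would invoke the improvement-of-flatness argument for variational solutions of this weighted Bernoulli problem (see \cite{ac} and \cite{vw}) to get that $\S$ is locally a $C^{1,\alpha}$ graph near $(x_0,y_0)$, and then a partial-hodograph bootstrap --- using that $\Delta u=0$ in $\{u>0\}$ and $u=0$, $|\nabla u|^2=A-By$ on $\partial\{u>0\}$ are real-analytic data --- to upgrade this to a real-analytic graph. This establishes the first assertion.

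Next I would consider $(x_0,y_0)\in\S$ with $|x_0|=1/2$; again $y_0<A/B$. By Lemma \ref{lem:blowup}(3) there are only two possibilities. If the unique blow-up is the flat horizontal half-plane $(A-By_0)\max(-y,0)$, the argument of the previous paragraph applies verbatim and $\S$ is locally real-analytic at $(x_0,y_0)$ as well; if this happens at \emph{every} point of $\S\cap\{|x|=1/2\}$, then together with the previous paragraph $\S\setminus(0,A/B)$ is locally the graph of an analytic function --- the first alternative in the statement. Otherwise, at some $(1/2,y_0)\in\S$ every blow-up is of the form $\theta|x|$ with $\theta\in(0,A-By_0]$; then $\S$ is not a $C^1$ graph near $(1/2,y_0)$, hence has a cusp there, downward-pointing by the cusp-exclusion argument in the proof of Lemma \ref{lem:blowup} combined with the symmetric-decreasing property of $u$.

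In this cusp case I still have to produce the claimed non-$\S$ free boundary points converging to $(1/2,y_0)$, and this is the heart of the argument. I would argue by contradiction: if in some ball $B_r(1/2,y_0)$ the only free boundary were $\S$ (and its mirror image), then $\{u=0\}$ cannot pinch $(1/2,y_0)$ from both sides --- otherwise $u$ would be positive and harmonic in $B_r(1/2,y_0)$ outside a region contained in an arbitrarily narrow cusp, and Oddson's theorem \cite{oddson} would give $\sup_{B_\rho(1/2,y_0)}u\ge\rho^\mu$ for every $\mu>1/2$ and all small $\rho$, contradicting the uniform Lipschitz bound exactly as in the proof of Corollary \ref{lem:flat}. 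Hence $\{u=0\}$ meets $(1/2,y_0)$ only from one side; on the other side the positivity set of every horizontal slice has empty boundary and is non-empty near $y_0$, hence equals all of $\T$, so $u$ is positive and harmonic in a one-sided cylinder $\T\times(y_0,y_0+\delta)$ (or $\T\times(y_0-\delta,y_0)$) and continuous up to the flat face $\{y=y_0\}$, on which $u(1/2,y_0)=0$. Hopf's lemma then forces the inward normal derivative $\partial_y u(1/2,y_0)$ to be nonzero, so every blow-up of $u$ at $(1/2,y_0)$ is affine and non-constant in $y$ on that half-space --- which is incompatible with its being $\theta|x|$. This contradiction shows that non-$\S$ (air-bubble) free boundary points must accumulate at the cusp, and completes the proof.
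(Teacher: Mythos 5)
Your first claim, and the case at $|x|=1/2$ where the blow-up is the horizontal half-plane, follow the paper's route exactly: non-degeneracy (Lemma \ref{lem:ndeg}) plus Remark \ref{r:compactness} to get a weak solution, flatness from Corollary \ref{lem:flat} and Lemma \ref{lem:blowup}, then the Alt--Caffarelli regularity and analyticity theorems \cite{ac}. (The aside that $e_1\neq 0$ is unnecessary -- the Alt--Caffarelli theory gives a local analytic graph in the direction of the blow-up normal, and in any case a graph over $y$ can be tangent to a horizontal line, so your justification of $e_1\neq0$ is itself shaky -- but this is harmless.) Where you genuinely depart from the paper is the cusp alternative: the paper handles the bad configuration in one stroke, namely if the water filled a punctured neighbourhood of the cusp point minus a thin cone (which is the situation for an upward cusp, or for a downward cusp with no non-$\S$ free boundary nearby), then Oddson's theorem \cite{oddson} forces $\sup_{B_r}u\gtrsim r^\mu$ with $\mu\in(1/2,1)$, contradicting the Lipschitz bound; this is also what (implicitly) produces the accumulating non-$\S$ points. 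You instead split into ``pinch from both sides'' and ``one-sided'' cases, and your one-sided argument -- a full water cylinder $\T\times(y_0,y_0\pm\delta)$ via the symmetrically decreasing structure, then Hopf's lemma contradicting the blow-up $\theta|x|$ -- is correct and is a nice, somewhat more elementary alternative to the Oddson step in that situation.

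The genuine gap is your exclusion of the double pinch. If $\{u=0\}$ pinches $(1/2,y_0)$ from both above and below, the positivity set near the point is \emph{not} the complement of a single thin cone: it consists of two sectors of aperture close to $\pi$, and no Oddson-type lower bound $\sup_{B_\rho}u\ge\rho^\mu$ with $\mu<1$ holds there. Indeed the model $\theta|x|$ itself is positive and harmonic off a line, vanishes on it, and is globally Lipschitz, so the implication ``harmonic and positive outside an arbitrarily narrow (double) cusp $\Rightarrow$ growth beating Lipschitz'' that you invoke is false. Consequently your argument does not rule out two $\S$-bounded air regions meeting at $(1/2,y_0)$ (i.e.\ the cusp height interior to $I$), which is precisely the configuration in which the asserted accumulation of non-$\S$ free boundary points would fail; some other idea (e.g.\ the paper's treatment of the upward-pointing part of such a configuration, or a flux/divergence argument adapted to vertical cusps) is needed here. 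Relatedly, your claim that the cusp must be downward-pointing ``by the cusp-exclusion argument in the proof of Lemma \ref{lem:blowup}'' is a misattribution: that divergence-theorem argument only excludes horizontal cusps (cusps pointing away from $x=0$, and the $\theta|y|$ asymptotics at $x=1/2$); the exclusion of an upward-pointing cusp at $|x|=1/2$ is exactly what the paper's proof of this corollary supplies via the Oddson/Lipschitz argument, and your Hopf argument only recovers it under the additional hypothesis that no air accumulates on the other side of the point. As written, both the orientation of the cusp and the double-pinch scenario are left unjustified.
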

\begin{proof}
The analyticity follows from \cite[Theorem 8.1 and Theorem 8.4]{ac}
(note that $u$ is a weak solution by Lemma \ref{lem:ndeg} as well as Remark \ref{r:compactness}
while the flatness assumption follows from Lemma \ref{lem:flat}).
In the case of an upward-pointing cusp at $x=1/2$,  $u$ is harmonic 
outside a cone of angle $< \pi$ touching the cusp point, so we obtain as in the previous
proof a contradiction to the Lipschitz regularity of $u$.
\begin{remark}
It is not so surprising that singularities may appear on $x=1/2$ as 
by construction $x\to u(x,y)$ is decreasing on $(0,1/2)$ but not
in any open neighborhood of $x=1/2$, so we lose the monotonicity and the $y$-graph property in that neighborhood.
\end{remark}
\end{proof}

\bibliographystyle{plain}
\bibliography{ww_variational_g}

\section*{Acknowledgments}

We would like thank Antoine Laurain and Josue Daniel Diaz Avalos for performing a numerical analysis of the double constraint approach discussed in the introduction. DK was supported by NSF DMS grant 2247096. 

\end{document}